\documentclass[11pt,oneside,english]{amsart}
\usepackage[T1]{fontenc}
\usepackage{lmodern}
\usepackage[latin9]{inputenc}
\usepackage{enumerate}
\usepackage{verbatim}
\usepackage{mathtools}
\usepackage{amstext}
\usepackage{amsthm}
\usepackage{amssymb}
\usepackage{linegoal}
\usepackage{stmaryrd}
\usepackage[top=28mm,right=30mm,bottom=28mm,left=30mm]{geometry}

\makeatletter
\numberwithin{equation}{section}
\numberwithin{figure}{section}
\theoremstyle{plain}
\newtheorem{thm}{\protect\theoremname}[section]
\theoremstyle{plain}
\newtheorem{cor}[thm]{\protect\corollaryname}
\theoremstyle{remark}
\newtheorem{rem}[thm]{\protect\remarkname}
\theoremstyle{plain}
\newtheorem{lem}[thm]{\protect\lemmaname}
\theoremstyle{plain}
\newtheorem{prop}[thm]{\protect\propositionname}
\theoremstyle{defn}
\newtheorem{defn}[thm]{Definition}

\usepackage[backref=page,colorlinks,citecolor=blue,bookmarks=true]{hyperref}\hypersetup{linkcolor=blue,  citecolor=blue, urlcolor=blue}
\usepackage{amsrefs}

\usepackage{microtype}

\let\originalleft\left
\let\originalright\right
\renewcommand{\left}{\mathopen{}\mathclose\bgroup\originalleft}
\renewcommand{\right}{\aftergroup\egroup\originalright}

\date{}

\makeatother

\usepackage{babel}
\providecommand{\corollaryname}{Corollary}
\providecommand{\lemmaname}{Lemma}
\providecommand{\propositionname}{Proposition}
\providecommand{\remarkname}{Remark}
\providecommand{\theoremname}{Theorem}

\newcommand{\eps}{\varepsilon}

\newcommand{\frakp}{\mathfrak{p}}%
\newcommand{\fraka}{\mathfrak{a}}%
\newcommand{\frakP}{\mathfrak{P}}%

\newcommand{\bF}{\mathbf{F}}

\newcommand{\R}{\mathbf{R}}

\renewcommand{\AA}{\mathbb{A}}

\newcommand{\CC}{\mathbb{C}}

\newcommand{\EE}{\mathbb{E}}
\newcommand{\FF}{\mathbb{F}}
\newcommand{\GG}{\mathbb{G}}
\newcommand{\HH}{\mathbb{H}}

\newcommand{\KK}{\mathbb{K}}
\newcommand{\LL}{\mathbb{L}}

\newcommand{\NN}{\mathbb{N}}

\newcommand{\PP}{\mathbb{P}}
\newcommand{\QQ}{\mathbb{Q}}

\newcommand{\UU}{\mathbb{U}}

\newcommand{\ZZ}{\mathbb{Z}}

\newcommand{\calB}{\mathcal{B}}
\newcommand{\calC}{\mathcal{C}}

\newcommand{\calE}{\mathcal{E}}

\newcommand{\calN}{\mathcal{N}}
\newcommand{\calO}{\mathcal{O}}
\newcommand{\calP}{\mathcal{P}}

\DeclareMathOperator{\SL}{SL}%
\DeclareMathOperator{\PGU}{PGU}%
\DeclareMathOperator{\SU}{SU}%
\DeclareMathOperator{\PGL}{PGL}%
\DeclareMathOperator{\Gal}{Gal}%
\DeclareMathOperator{\Frob}{Frob}%
\DeclareMathOperator{\spec}{spec}%
\DeclareMathOperator{\nm}{Nm}%
\DeclareMathOperator{\GL}{GL}%
\DeclareMathOperator{\Res}{Res}%
\DeclareMathOperator{\adj}{adj}%
\DeclareMathOperator{\Spec}{Spec}%
\DeclareMathOperator{\Cay}{Cay}%
\DeclareMathOperator{\rk}{rk}%

\DeclareMathOperator{\Aut}{Aut}%
\DeclareMathOperator{\supp}{supp}%
\DeclareMathOperator{\Out}{Out}%
\DeclareMathOperator{\id}{id}%
\DeclareMathOperator{\diam}{diam}%
\DeclareMathOperator{\al}{al}%
\DeclareMathOperator{\der}{der}%
\DeclareMathOperator{\distinct}{distinct}%
\DeclareMathOperator{\Unif}{Unif}%
\DeclareMathOperator{\mspec}{mspec}

\DeclareMathOperator{\PSU}{PSU}
\DeclareMathOperator{\op}{op}

\begin{document}

\author[O.\ Becker]{Oren Becker} \address{Oren Becker\hfill\break 	Department of Pure Mathematics and Mathematical Statistics\hfill\break 	Centre for Mathematical Sciences\hfill\break 	Wilberforce Road, Cambridge CB3 0WA, United Kingdom} \email{oren.becker@gmail.com}
\author[E.\ Breuillard]{Emmanuel Breuillard} \address{Emmanuel Breuillard\hfill\break 	Mathematical Institute \hfill\break Oxford OX1 3LB, United Kingdom} \email{emmanuel.breuillard@maths.ox.ac.uk}
\title{}
\title[Uniform expansion in finite groups of Lie type]{Uniform expansion in finite groups of Lie type}


\maketitle

{\centering \it Dedicated with admiration and affection to Alex Lubotzky on his 70th birthday.\par}

\begin{abstract}
We prove that finite simple groups $G(p)$ of bounded rank and with $p$ prime have uniform expansion, that is, the family of all the Cayley graphs forms a family of (two-sided) expanders, except perhaps when $p$ belongs to a small family of exceptional primes.
Furthermore, for all prime powers $q$, the set of possible exceptions to uniform expansion of $G(q)$ is shown to have ``dimension zero''. We also extend these results to semisimple and perfect algebraic groups.
\end{abstract}

\tableofcontents

\section{Introduction}

In \cite{bourgain-gamburd} J. Bourgain and A. Gamburd introduced a new method to obtain spectral gaps for Cayley graphs of finite groups. Using it they showed that every Cayley graph of $\SL_2(p)$, $p$ prime, with girth comparable to $\log p$ is an expander graph. It follows \cite[Theorem 1.2]{bourgain-gamburd} that random Cayley graphs of $\SL_2(p)$ are expanders and that so are Cayley graphs obtained from the reduction modulo $p$ of a fixed Zariski-dense subgroup of $\SL(2,\ZZ)$ (super strong approximation).

In the second-named author's ICM address \cite{breuillard-icm}, it was conjectured that finite simple (or quasi-simple) groups of Lie type $G(q)$  of bounded rank have \emph{uniform expansion}. This means that the entire family of all Cayley graphs of these groups should form a family of expanders. In other words, it posits the existence of a gap $\eps=\eps(r)>0$ such that \begin{equation}\label{expand}|AB|\ge (1+\eps)|B|\end{equation} for all subsets $A,B \subset G(q)$ such that $|B|\leq |G(q)|/2$ and $A$ generates and contains the identity and all finite simple groups $G(q)$ of rank at most $r$. The question of uniform expansion was first raised by Lubotzky and Weiss in \cite{lubotzky-weiss}. (See also the recent survey \cite{breuillard-lubotzky} for related questions and the book \cite{lubotzky} and survey \cite{wigderson-et-al} for background on expanders.) In \cite{breuillard-gamburd} A. Gamburd and the second named author offered some evidence towards this conjecture. They showed that, except for some zero-density set of primes $p$, the family of all Cayley graphs of $\SL_2(p)$ forms a family of expander graphs. This provided the first example of an infinite family of finite groups admitting uniform expansion, answering a problem formulated in  \cite[\S 5]{lubotzky-weiss}.

At the same time, Green, Guralnick, Tao and the second named author \cite{bggt} extended the aforementioned result of Bourgain-Gamburd \cite{bourgain-gamburd} about random Cayley graphs of $\SL_2(p)$ to all finite simple groups of Lie type $G(q)$, $q$ a power of $p$. It was shown in \cite{bggt} that there is a power saving $\delta>0$ depending only on the rank $r$ of $G$ such that all but $|G(q)|^{2-\delta}$ pairs $(a,b) \in G(q)$ give rise to expander Cayley graphs of $G(q)$. 

In the present paper we improve these results in two ways. Our first purpose will be to extend \cite{breuillard-gamburd}  to all other finite simple groups of Lie type (see Theorems \ref{expmain} and \ref{perfectaddon} for extensions to semisimple and perfect algebraic groups):





\begin{thm}[uniform expansion mod $p$]\label{cro-refor}
Fix $r, m \in \NN$. The family of all finite simple groups $G(p^k)$ of
Lie type with rank at most $r$ and $k \leq m$ has $\eps$-uniform
expansion, $\eps > 0$, as long as a (possibly empty) density-zero set
$\calP$ of primes $p$ is excluded: for every $\delta \in (0,1)$ we can
ensure $\#\{p \in \calP \mid p \leq X\} \leq X^{\delta}$ for all $X$, with
$\eps = \eps(r, m, \delta)>0$.
\end{thm}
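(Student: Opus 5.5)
We would follow the Breuillard--Gamburd strategy from \cite{breuillard-gamburd}, combined with the Bourgain--Gamburd machine as generalised in \cite{bggt}. The argument splits into three steps: a spectral-gap criterion based on non-concentration in proper subgroups, a counting argument over primes, and a final assembly.

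\textbf{Step 1: reduction to escape from subgroups.} By Bourgain--Gamburd together with the product theorem for $G(q)$ of bounded rank (Breuillard--Green--Tao, Pyber--Szab\'o), it suffices to prove the following for a symmetric generating set $S$ of $G(q)$, $q=p^k$ with $k\le m$. After $\ell = C\log q$ steps, the random walk $\mu_S^{(\ell)}$ gives mass at most $|G(q)|^{-\kappa}$ to every proper subgroup, with $C,\kappa$ depending only on $r,m,\delta$. Indeed:
\begin{itemize}
\item quasirandomness (Landazuri--Seitz) supplies the final flattening step;
\item the product theorem converts non-concentration into growth of $\|\mu^{(n)}\|_2$;
\item the initial stage is exactly the escape property above.
\end{itemize}
Uniform expansion for the family then follows. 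Note that $\eps$-uniform expansion for all generating sets is equivalent to a uniform spectral gap for all Cayley graphs, since spectral gap and vertex expansion are comparable.

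\textbf{Step 2: the counting argument over primes.} The difficulty is that $S$ is arbitrary, so short words may satisfy short relations. Following \cite{breuillard-gamburd}, we would call $p$ \emph{bad at scale $\ell$} if some generating pair (or bounded-size set, via an elementary reduction to two generators) of $G(p^k)$ has a nontrivial word of length $\le \ell$ in a fixed finite list of ``subgroup-detecting'' word maps vanishing identically. These word maps would be built from the Larsen--Pink / \cite{bggt} description of proper subgroups: any proper subgroup lies either in a bounded-degree algebraic subgroup or in a subfield subgroup. Then:
\begin{itemize}
\item Using Chevalley models over $\ZZ$, each candidate relation is a polynomial system over $\ZZ$ of degree $O(\ell)$.
\item By the uniform Tits alternative / strong uniform Tits alternative (Breuillard) and Zariski density of generic pairs in characteristic $0$, these systems define proper subvarieties of $G\times G$ over $\QQ$.
\item An effective Nullstellensatz / resultant argument then shows that the primes where a given short word becomes a law for some pair divide a nonzero integer of size $\exp(O(\ell^{c}))$.
\end{itemize}
The main point is to choose $\ell$ so that the resulting exceptional primes are rare. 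Take $\ell \approx \delta' \log p$. If the set of primes $p\le X$ failing escape at scale $\delta'\log p$ were larger than $X^\delta$, then multiplying the corresponding divisibility relations gives a contradiction with the height bound, exactly as in \cite{breuillard-gamburd}, after optimising $\delta'$ in terms of $\delta$.

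\textbf{Step 3: from no short relations to escape.} For good $p$, there are no short relations of the relevant kind. We would show this forces the walk of length $\ell$ to behave like the walk on a free group, via a Kesten-type bound on return probabilities. Hence the probability of landing in any bounded-degree algebraic subgroup is at most $|G|^{-\kappa}$ (Escobar's lemma / escape from subvarieties), and similarly for subfield subgroups, which have size $\le |G(p^k)|^{1-c}$ with $k\le m$ bounded.

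We expect Step 2 to be the main obstacle: controlling uniformly the heights of the integers encoding ``a word of length $\ell$ is a law on a proper algebraic subgroup containing some generating pair''. This must be done for every group type of rank $\le r$ simultaneously, and for generating sets that may only generate after reduction mod $p$. We would first try a uniform effective Bezout bound applied to the trace-polynomial description of the varieties of non-Zariski-dense pairs.
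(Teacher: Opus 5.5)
\textbf{There is a genuine gap, and it is in your Step 3.} Because of it, Step 2 ends up transferring the wrong statement.

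The implication ``no relations of length $\le\ell$ $\Rightarrow$ $\mu_S^{\ell}(H)\le|G|^{-\kappa}$ for all proper $H$'' is the Breuillard--Gamburd mechanism for $\SL_2$. It works there only because proper subgroups of $\SL_2(p)$ are virtually metabelian or of bounded order. A short metabelian law, pulled back to a ball on which $F_2\to G$ is injective, forces the relevant words into a cyclic subgroup of $F_2$, where Kesten's bound applies.

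In higher rank there are proper subgroups that satisfy no law of length comparable to $\ell$. Examples are $\SL_2(p)\times\SL_2(p)\le\SL_4(p)$, other reductive subgroups, and subfield subgroups when $k\ge 2$. For these, injectivity on the ball says nothing about how many words of length $\ell$ land in $H$. The tools you cite do not close this:
\begin{enumerate}
\item Kesten's bound only controls the mass of single points.
\item Escape from subvarieties only produces one escaping word of bounded length. It does not give decay $e^{-c\ell}$ uniformly in $S$.
\item The bound $|H|\le|G|^{1-c}$ is irrelevant at scale $\ell\asymp\delta'\log p$, where the whole ball has only $p^{O(\delta')}$ points.
\end{enumerate}
Separately, as stated your notion of bad prime seems to make every prime bad. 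For every $p$, some generating pairs satisfy short relations, for instance bounded-order torsion.

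\textbf{The missing input.} The paper uses the uniform exponential anti-concentration theorem of \cite{becker-breuillard} in characteristic zero, applied to pairs of words: for every $\overline g\notin V^{(2)}_{M_1}$, one has $\PP(\overline w(\overline g)\in V^{(2)}_{M_1})\le Ce^{-\ell/C}$.

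It then uses $\mu^{\ell}(H)^2\le\PP(\overline w(\overline g)\in V^{(2)}_{M_1,p})$, which holds because two elements of a proper $H$ cannot generate $G^+$. Combined with restriction of scalars and Nori's theorem (Lemmas \ref{norilem} and \ref{generating}), this handles every proper subgroup, subfield ones included, through one bounded-degree locus. No case analysis of subgroups is needed.

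\textbf{The transfer to characteristic $p$.} Transferring this bound is a statement about measures of sets of words, not about single relations. It amounts to the inclusion $\{\overline g:\overline w(\overline g)\in V^{(2)}_{M_1}\ \forall\,\overline w\in W'_\ell\}\subseteq V^{(2)}_{M_1}$ for every $W'_\ell\subset W_\ell$ of density $>Ce^{-\ell/C}$.

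There are doubly exponentially many such $W'_\ell$. Applying an effective Nullstellensatz to each one separately gives far too many exceptional primes, and single-word relations as in your Step 2 do not capture this condition at all.

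The paper's key new ingredient here is Proposition \ref{prop:types}. Lemma \ref{lem:component-of-few-eqs-1} bounds exponentially the number of irreducible components (``types'') arising from all subfamilies. This yields one $\Delta$ with $\log\Delta\le e^{O(\ell)}$, hence only $e^{O(\ell)}$ bad primes at scale $\ell$. Choosing $\ell_p\asymp\delta\log p$ then gives the $X^{\delta}$ bound.
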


In going from $\SL_2$ to arbitrary simple $G$ we managed to overcome a well-known difficulty in the field concerning non-concentration on subgroups (see e.g. \cite{golsefidy-srinivas1}). Theorem \ref{cro-refor} is also a key ingredient in our forthcoming work on character varieties of random groups with P. Varj\'u \cite{becker-breuillard-varju}.

The second main result of this paper is an improvement on \cite{bggt}, where we can now assert that exceptions to the uniform expansion conjecture have ``\emph{dimension zero}'': 

\begin{thm}[dimension zero theorem]\label{dim0} For every positive integer $r$, every $\delta>0$, there is $\eps=\eps(r,\delta)>0$ such that for every prime power $q$ and every finite simple group of Lie type $G(q)$ of rank at most $r$, all but at most $O_{\delta,r}(q^\delta)$ conjugacy classes of generating pairs $a,b \in G(q)$, give rise to $\eps$-expander Cayley graphs, i.e. \eqref{expand} holds for $A=\{1, a,a^{-1},b,b^{-1}\}$ with $\eps=\eps(r,\delta)>0$.
\end{thm}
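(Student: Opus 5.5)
We follow the Bourgain--Gamburd strategy of \cite{bggt}, but replace its ``random pair'' input by an algebraic-geometric count of bad pairs. Fix $r$ and $\delta$. For a generating pair $(a,b)$ of $G(q)$ we consider the random walk $\mu^{(n)}$, where $\mu$ is uniform on $\{1,a^{\pm1},b^{\pm1}\}$. By the Bourgain--Gamburd machine (product theorem of \cite{bggt} together with Gowers' quasirandomness, since minimal degrees of nontrivial representations of $G(q)$ are $\gg q^{c(r)}$), an $\eps$-spectral gap follows from two inputs. The first is \emph{escape from subgroups}: for some $n\asymp C\log q$, $\mu^{(n)}(gH)\le |G(q)|^{-\kappa}$ for every proper subgroup $H$ and every coset $gH$. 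The second is \emph{large girth} (non-concentration on the identity): $\mu^{(n)}(1)\le |G(q)|^{-\kappa}$ at this scale. So it suffices to show that all but $O_{\delta,r}(q^\delta)$ conjugacy classes of generating pairs satisfy these two properties with constants $C,\kappa$ depending only on $r,\delta$.

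\textbf{Step 1: reduce both properties to word maps.} By the Larsen--Pink type classification of subgroups of $G(q)$, used as in \cite{bggt}, a proper subgroup either lies in a proper algebraic subgroup of bounded complexity or is a subfield subgroup. Escaping bounded-complexity algebraic subgroups is implied by the non-vanishing of certain word polynomials: one needs that no too-large proportion of words of length $\ell\asymp\log q$ lands in a proper algebraic subgroup. Via the standard ``Tits alternative / strongly dense'' argument of \cite{bggt}, this reduces to showing that the pair $(a,b)$ satisfies no nontrivial relation from an explicit finite list of word identities of length $\le \ell$. The list consists of short laws of solvable groups and of algebraic subgroups of bounded degree, together with trivial-word relations for the girth. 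Subfield subgroups are handled by a trace argument, since a pair generating $G(q)$ has traces not all in a proper subfield.

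\textbf{Step 2: count bad pairs.} Each bad event is contained in a proper subvariety $V_w\subset G\times G$ defined by $w(a,b)=1$ or $w(a,b)\in$ a bounded family of subgroups, with $w$ of length $\ell=c\log q$. For words $w\ne1$, the variety $V_w$ is proper by Borel's dominance theorem for word maps. Its degree is at most $\exp(O(\ell))=q^{O(c)}$. The key point is to bound the number of $\FF_q$-points lying on $V_w$ modulo conjugation \emph{beyond} the Lang--Weil count. Here we use a dimension-reduction trick: the bad set $B=\bigcup_{|w|\le\ell}V_w(\FF_q)$ is conjugation invariant. Taking $c=c(\delta)$ small, we intersect the relevant varieties over many words and show the common zero set has dimension equal to $\dim G$, i.e.\ it consists of finitely many conjugacy orbits of pairs. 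This would follow from a ``uniform'' version of the Bourgain--Gamburd escape lemma over characteristic zero fields, as in \cite{breuillard-gamburd}: any pair that is bad for \emph{many} words is bad for a bounded-length word. Then a Bezout bound gives $q^{O(c)}\le q^\delta$ orbits.

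\textbf{Main obstacle.} The hard step is Step 2. A single relation $w(a,b)=1$ only cuts out a codimension-one set, which has $\asymp q^{2\dim G-1}$ points, far more than $q^\delta$ classes. We must therefore exploit that failure of expansion forces $(a,b)$ to satisfy a \emph{large, structured} family of short relations: concentration of $\mu^{(n)}$ yields $\gg$ exponentially many coincidences $w_1(a,b)=w_2(a,b)$. We would then use an effective Nullstellensatz/B\'ezout argument to show that the variety cut out by such a family is zero-dimensional modulo conjugation, with degree $q^{O(c)}$. I expect this to need an ``approximate group in $G\times G$'' or effective Zariski-density input, namely that a positive-dimensional family of pairs cannot all satisfy exponentially many independent short relations. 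Establishing this uniformly in the type of $G$ and in $q$ is the crux of the argument.
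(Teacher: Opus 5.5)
Your Bourgain--Gamburd framework is the right one: quasirandomness and the product theorem come for free, so everything reduces to non-concentration on structural and subfield subgroups at scale $\ell\asymp\log q$. However, the proposal stops exactly where the content of the theorem lies, and the tools you suggest for Step 2 would not close the gap.

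\textbf{The counting is doubly exponential.} Reducing to ``$(a,b)$ satisfies no relation from a fixed list of words of length $\le\ell$'' can only reproduce the count of \cite{bggt}. Each such condition cuts out a proper subvariety of $\GG^2$, and the union over $q^{O(c)}$ words has about $q^{2\dim\GG-1+O(c)}$ points. The quantity that must be controlled is probabilistic: the set $E$ of word pairs $(w_1,w_2)$ with $(w_1(a,b),w_2(a,b))\in V_M^{(2)}$ (a set whose measure dominates $\mu^{\ell}(H)^2$ for every structural $H$) must have measure at most $C_M2^{-c\ell}$. But $E$ depends on $(a,b)$, and there are $2^{2^{O(\ell)}}$ candidates. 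So ``intersecting the varieties over many words'' and applying B\'ezout to each $V_E$ gives a doubly exponential union bound. The missing idea is Lemma \ref{lem:component-of-few-eqs-1}. Every irreducible component of $V(E')$, for any $E'\subset E$, arises from a chain of at most $N$ successive intersections with single equations from $E$, where $N$ is the ambient dimension. Hence the total number of such components (``$\ell$-types'') over all subsets $E$ is only $2^{O(\ell)}$. The fat/thin dichotomy is then attached to these types rather than to subsets $E$ or to individual pairs.

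\textbf{Why fat types are points needs uniform-in-characteristic anti-concentration at a generic point.} This cannot come from a characteristic-zero escape lemma as in \cite{breuillard-gamburd}. Over $\FF_p^{\al}$ every pair generates a finite, non-Zariski-dense group, so ``bad for many words implies bad for a bounded-length word'' is false pointwise. Moreover, characteristic-zero information only transfers to $\FF_p$ by reduction away from an exceptional set of primes, whereas the theorem holds for every $q$. The paper instead applies the anti-concentration bound of \cite{becker-breuillard}, which is uniform in the characteristic, at a \emph{generic point} of a positive-dimensional type. That point has infinite Galois orbit, so it generates an infinite subgroup. By the choice of $M$ (via \cite{bggt-israel}), this subgroup is Zariski-dense unless the type lies in $\overline{V}_M^{(2)}$. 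Hence every fat type not in the degenerate locus is a single conjugacy class. There are at most $2^{O(\ell)}\le q^{\delta}$ of these once $\ell\asymp\kappa\log q$ with $\kappa$ small in terms of $\delta$.

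\textbf{The trace argument does not handle subfield subgroups.} It only shows that $(a,b)$ itself avoids subfield subgroups. It gives no bound on the proportion of words $w$ for which $w(a,b)$ lies in some $\GG^{F_0}$. The paper encodes $w(a,b)\in\GG^{F_0}$ as $w((a,a^{F_0}),(b,b^{F_0}))\in\Delta_\LL$ in $\LL=\GG\times\GG$. It then runs the same type-counting and anti-concentration argument there: fat diagonal types have a zero-dimensional projection. Finally, it uses Goursat's lemma to exclude the degenerate locus for generating pairs.
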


This is new already for $\SL_2$.  We count conjugacy classes of pairs, namely pairs up to the diagonal action $g\cdot(a,b)=(gag^{-1},gbg^{-1})$. This is natural since property \eqref{expand} is invariant under this action. Theorem \ref{dim0} also holds for $m$-tuples for any $m\geq 2$, see the more precise formulation in Theorem \ref{expmaindim0}.

\vspace{.5cm}

\noindent {\bf Remarks.} (a) The possible existence of exceptional primes $p$ for which uniform expansion might fail in $G(p)$ and that of the fewer than $q^{\delta}$ possible classes of exceptional pairs in $G(q)$ that might lead to non-expander Cayley graphs for $G(q)$ cannot be ruled out by our method alone. In the case of exceptional primes, the reason has to do with our use of effective-arithmetic-nullstellensatz-flavoured results (see Section \ref{bad-red} and \cite{BeckerBreuillardReductions}):
While we would wish for bad reduction associated to word maps of length $\ell$ to be confined to primes of size at most exponential in $\ell$, we cannot rule out bad reduction up to the super-exponential bound $\exp(\ell^{O_r(1)})$.
Although such bounds are sharp for general polynomials, so that no general-purpose argument can do better, it remains possible that the specific structure of the word maps arising in our group-theoretic context, and of their reduction mod $p$, could be exploited to improve them.
Similarly in Theorem \ref{dim0}, we have only polynomial control $\ell^{O_r(1)}$ on the degree of the potential exceptions (the Bezout bound), while linear control would be needed to rule them out. 

We thus have to allow for the potential existence of a few pairs with bad reduction, or  \emph{``{\bf scars}''}, namely pairs $(a,b)$ that although they do generate $G(q)$ behave as if they belonged to a proper subgroup.  

Without discarding any exceptional pairs in Theorem \ref{dim0}, we can get exponential non-concentration for all generating sets up to length $(\log |G|)^c$ for some $c=c(r)>0$. This however is not enough for proving global expansion.

(b) An explicit lower bound on the expansion gap $\eps>0$ can be traced from our arguments (see e.g. Remark \ref{explicit} for Theorem \ref{cro-refor}). 
Since this expression is likely not sharp we do not dwell on this. 

(c) Expansion for a family of Cayley graphs $\mathcal{G}$ implies fast (logarithmic) mixing of the simple random walk $(S_n)_{n\ge 0}$ on the graph (it implies double-sided expansion, see \cite[Appendix E]{bggt}, \cite{Biswas} and \cite{alex-private}). Conversely, logarithmic mixing of the simple random walk on a Cayley graph implies expansion (see \cite[Lemma 3.3]{breuillard-standrews}). This idea is at the heart of the Bourgain-Gamburd method for establishing spectral gaps in Cayley graphs (see \cite[Proposition 3.1]{bggt}, \cite{breuillard-standrews}, \cite{tao-expansion-book}). There are three phases towards mixing. The last two phases are fully established for the groups we consider: they follow from the classification of approximate groups \cite{helfgott, pyber-szabo, breuillard-green-tao-linear} and quasi-randomness \cite{landazuri-seitz, sarnak-xue, gowers}. We thus concentrate on the first phase (the only stage that depends on the choice of generating set $S$), which aims to prove exponential non-concentration of the walk on subgroups up to $n \simeq \log |G|$.

(d) The proof of Theorem \ref{cro-refor} proceeds  by reduction modulo primes of the first order logic formulae that express the non-concentration on subgroups. This idea of reduction is similar in spirit to the argument from \cite{breuillard-gamburd}, where the effective arithmetic nullstellensatz was used to handle the case of $\SL_{2}\left(p\right)$. To effect this we crucially rely on the uniform anti-concentration bounds for random walks in characteristic zero that we obtained in our previous work \cite{becker-breuillard}. These provide a uniform exponential bound on the hitting probability of  random walks on algebraic subvarieties in algebraic groups over infinite fields. The proof of the anti-concentration bounds in \cite{becker-breuillard}  is based on diophantine considerations, in particular the Height gap theorem from \cite{breuillard-annals}. Conversely, uniform expansion for $G(p)$ in combination with the Lang-Weil estimates implies these anti-concentration bounds. So the anti-concentration bounds from \cite{becker-breuillard} are a \emph{necessary step} towards Theorem \ref{cro-refor}.  They allow us to show that the random walk does not \emph{concentrate on subgroups}, which is a well-known difficulty that we manage to overcome here (see e.g. \cite{golsefidy-srinivas} where this difficulty is reduced to the case of simple groups).

(e) Another new feature that we bring in the proofs of Theorems \ref{cro-refor} and \ref{dim0} is the consideration of the number of \emph{types} cut out by the above-mentioned first order logic formulae. We prove an exponential bound on them, where the naive bound is super-exponential. For Theorem \ref{cro-refor} we also require a polynomial bound for the discriminant of bad reduction for affine varieties defined over $\ZZ$, established in \cite{BeckerBreuillardReductions} using explicit Groebner basis estimates. This will be the topic of Section \ref{bad-red}.


(f) The proof of Theorem \ref{dim0}  will further use the positive characteristic version of the same anti-concentration bounds, which we also established in \cite{becker-breuillard}. They have the crucial feature of being  uniform regardless of the characteristic of the ambient field. Our approach  allows us to handle subfield subgroups uniformly (without any special subcase as in \cite{bggt, bgt-suzuki} where triality groups ${}^{3}D_4(q)$ or Suzuki-Ree groups had to be handled separately) which was a key difficulty in previous attempts at generalizing super-strong approximation to positive characteristic.

(g) In  \cite{salehi-varju}, Golsefidy and Varj\'u  established super strong approximation in characteristic zero. This asserts that finitely generated Zariski-dense subgroups of perfect algebraic groups admit expander quotients modulo squarefree integers (see \cite[Window 9]{LubotzkySegal}, \cite{rapinchuk}, \cite{pink} for background on ordinary strong approximation and  \cite{salehi-sarnak, bourgain-gamburd-sarnak} for applications to number theory (affine sieve); see also \cite{breuillard-standrews} for an overview and \cite{avni-gelander} for a recent improvement).  It is an open problem to extend their result to positive characteristic. Theorem \ref{dim0} and the method presented here can be used to prove super-strong approximation in positive characteristic modulo prime ideals of prime degree only, or modulo ``most prime ideals''. But getting all prime ideals requires a new idea.

\vspace{.5cm}


\noindent \emph{Conventions.} We use Vinogradov's notation: $f\ll g$ stands for $f\leq Cg$, where
$C$ is an absolute constant. We write $f\ll_{X}g$
when $C$ is allowed to depend only on $X$. And $f\simeq_{X}g$ means both $f\ll_{X}g$ and $g\ll_{X}f$. Also for a positive integer $r$ we write $[r]$ for the set $\{1,\ldots,r\}$. The notation $\PP_{s\sim\Unif\left(S\right)}$ means that we are considering the probability of an event for $s$ chosen uniformly at random in the set $S$.  
We denote by $K^{\al}$ an algebraic closure of the field $K$.

\section{Good reduction and exponential bound on types}\label{bad-red}
In this section, we prove Proposition \ref{prop:types}, a technical result that will be needed for the proof of Theorem \ref{cro-refor}. The point is to derive bounds for the largest prime with bad reduction for a family of $\ZZ$-schemes. The bounds are given in terms of the degrees and heights of generators of the corresponding ideal. We begin with a general discussion regarding $\ZZ$-schemes and their reduction modulo primes, and then recall the main results of our companion paper \cite{BeckerBreuillardReductions}.

Proposition \ref{prop:types}  will only be required in the special case where the ground field is $\QQ$. Nevertheless, even in this special case, the proof of Proposition \ref{prop:types} will need to proceed by considering more general number fields. Thus, we must state the required tools, namely Theorem \ref{thm:good-reductions} and Proposition \ref{prop:inclusion-preserved-mod-p}, in the generality of number fields, and we shall prove Proposition \ref{prop:types} over any number field.

We begin with recalling some terminology regarding schemes over rings of integers of number fields. We write $\spec R$ (resp. $\mspec R$) for the set of prime (resp. maximal) ideals of a commutative unital ring $R$. Recall that if $R$ is the ring of integers of a number field then $\mspec R=(\spec R)\setminus\{(0)\}$.

For $\alpha\in\QQ^{\al}$ (resp. $\FF_p(t)^{\al}$), we write $h(\alpha)$ for the logarithmic Weil height of $\alpha$, that is $$h(\alpha)=\frac{1}{d}\sum_{v \in V_K} n_v \log^+|\alpha|_v,$$ where $K$ is any finite extension of $\QQ$ (resp. $\FF_p(t)$) containing $\alpha$, $d$ its degree, $V_K$ the set of places (i.e. equivalence classes of absolute values) of $K$ and $n_v=[K_v:\QQ_v]$ the degree of the local extension $K_v$, the completion of $K$ with respect to $v$. We refer the reader to \cite{BombieriGubler2006} for background on heights. 
For polynomials $f_1,\dotsc,f_s$ with coefficients in $\QQ^{\al}$ (resp. $\FF_p(t)^{\al}$)
we write $h_{\max}(f_1,\dotsc,f_s)$ for $\max\{h(\alpha)\mid \alpha\in\calC\}$, where $\calC$ is the set of coefficients of $f_1,\dotsc,f_s$.

In what follows, we handle the general case of global fields (that is finite extensions of either $\QQ$ or $\FF_p(t)$) and we use the following shorthand notation: we denote by $F_0$ either $\QQ$ or $\FF_p(t)$ and we let $F_0^{\al}$ be an algebraic closure. 

For a subset $S$ of the polynomial ring $F_0^{\al}[T_1,\dotsc,T_r]$, we write $V(S)$ for the affine algebraic set of all points ${\bf x}\in (F_0^{\al})^r$ such that $f({\bf x})=0$ for all $f\in S$. For a global field $F$ (i.e. a finite extension of $F_0$) with ring of integers $\calO_F$
and a prime ideal $\frakp\in\mspec\calO_F$,
we write $S_{\frakp}$ for the image of $S\cap\calO_{F}[T_1,\dotsc,T_r]$ in $(\calO_{F}/\frakp)[T_1,\dotsc,T_r]$ after reduction modulo $\frakp$.
For a nonzero ideal $\fraka$ of $\calO_F$ we write $\calN\fraka$ for the absolute norm of $\fraka$, i.e. $\calN\fraka=[\calO_F:\fraka]$.

In this section from now on we will work in characteristic zero, except for Lemmas \ref{lem:component-of-few-eqs-1} and \ref{lem:deg-height--word-eqs}. These lemmas work in all characteristics and will be used in Section \ref{dzerosec} in positive characteristic. 

We denote the field of definition of an algebraic subset $X$ of $(\QQ^{\al})^{r}$ by $F_X$. This is the fixed field of the subgroup of Galois automorphisms $\sigma$ that preserve $X$ setwise, i.e. $\sigma X=X$.
Similarly, the field of definition $F_I$ of an ideal $I$ of $\QQ^{\al}[T_1,\dotsc,T_r]$ is the fixed field of the subgroup of Galois automorphisms $\sigma$ that preserve $I$ setwise, i.e. $\sigma I=I$ (and is also the smallest subfield of $\QQ^{\al}$ over which $I$ admits a generating set). Note that $F_X=F_{I(X)}$ where $I(X)$ is the radical ideal consisting of all polynomials in $\QQ^{\al}[T_1,\dotsc,T_r]$ that vanish on $X$. Also note that $F_{\sqrt{I}}\subset F_I$ and that while $F_{X}$ is contained in the compositum of fields of definitions of the irreducible components of $X$, it is possible for $F_I$ not to be contained in the compositum of the fields of definitions of the components of $V(I)$ if $I$ is not radical.

An ideal $J$ of $\QQ^{\al}[T_1,\dotsc,T_r]$ is said to have \emph{complexity} at most $(d_0,h_0)$ for integers $d_0,h_0 \ge 1$ if
there are $g_1,\dotsc,g_s\in F_J[T_1,\dotsc,T_r]$ with $\max_{i=1}^{s} \deg g_i \leq d_0$
and
$h_{\max}(g_1,\dotsc,g_s)\leq h_0$ that generate $J$.
The requirement that the coefficients of the $g_i$ lie in $F_J$ is not very limiting. Indeed, if $J$ is generated by polynomials $g_1,\dotsc,g_s\in F_0^{\al}[T_1,\dotsc,T_r]$ of degree at most $d_0$ with $h_{\max}(g_1,\dotsc,g_s)\leq h_0$ then $J$ has complexity at most $(d_0^{O_r(1)}, O_r(d_0^{O_r(1)})h_0)$ (this follows from \cite[Proposition 4.17]{BeckerBreuillardReductions}, noting that the coefficients of every element of a reduced Groebner basis of $J$ lie in $F_J$).

The following summarizes the main result from \cite{BeckerBreuillardReductions}.

\begin{thm}
\label{thm:good-reductions}\cite{BeckerBreuillardReductions}
Let $I$ be an ideal of
$\QQ^{\al}[T_1,\dotsc,T_r]$
generated by polynomials
$f_{1},\dotsc,f_{s}$.
Write
$d_0=\max_{i=1}^{s}\deg f_i$
and
$h_0=\max\{h_{\max}(f_1,\dotsc,f_s),1\}$.
Write $L$ for the compositum of the fields of definition of the irreducible components $X_1,\ldots,X_t$ of $V(I)$.
Then there are ideals
$I_1,\dotsc,I_t$
of
$\QQ^{\al}[T_1,\dotsc,T_r]$,
each of complexity at most
$(d_0^{O_{r}(1)},O_r(d_0^{O_r(1)} h_0))$,
and a positive integer
$\Delta\in\NN$,
$\log\Delta\ll_r [F_I:\QQ]^{O(1)}d_0^{O_{r}(1)} h_0$,
such that:
\begin{enumerate}
    \item 
    $V(I_1),\dotsc,V(I_t)$ are the distinct irreducible components of $V(I)$.

    \item
    $F_{I_{j}}=F_{X_{j}}$ and
    $[F_{X_{j}}F_{I}:F_{I}]\leq d_0^{O_{r}(1)}$
    for each $1\leq j\leq t$.

    \item 
    For every $\frakp\in\mspec\calO_{L F_I}$
    such that
    $\gcd(\calN \frakp,\Delta)=1$,
    the distinct irreducible components of $V(I_{\frakp})$ are
    $V((I_1)_\frakp),\dotsc,V((I_t)_\frakp)$,
    and $\dim V((I_j)_{\frakp}) = \dim V(I_j)$ for all $1\leq j\leq t$.
    
\end{enumerate}
\end{thm}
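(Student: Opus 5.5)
This theorem is quoted from \cite{BeckerBreuillardReductions}, but I would reprove it by effective elimination. I would compute effective generators for the prime ideals of the components, and then take $\Delta$ to be the product of all denominators and leading coefficients that must stay invertible for the decomposition to specialise correctly modulo $\frakp$.

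\emph{Step 1 (components over $\QQ^{\al}$).} Replace $I$ by generators with coefficients in $F_I$, at the polynomial cost noted before the statement. Then run an effective primary decomposition, or equidimensional decomposition followed by Noether normalisation and primitive elements, over $F_I$. Each irreducible component $X_j$ is then cut out by a prime ideal $I_j$ whose generators have degree $d_0^{O_r(1)}$. By arithmetic Bezout and height bounds for Groebner bases / Chow forms, these generators have height $O_r(d_0^{O_r(1)}h_0)$. Taking a reduced Groebner basis gives coefficients in $F_{X_j}$, which proves (2). The bound $[F_{X_j}F_I:F_I]\le d_0^{O_r(1)}$ holds because Galois permutes the components and there are at most $d_0^{r}$ of them by Bezout.

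\emph{Step 2 (certificates).} Encode the statements to be preserved as polynomial identities with explicit coefficients:
\begin{enumerate}
\item $\sqrt{I}=\bigcap_j I_j$, certified by effective Nullstellensatz identities $g^{N}=\sum a_i f_i$ for $g$ in a Groebner basis of $\prod_j I_j$, and membership of each $f_i$ in every $I_j$;
\item each $I_j$ is prime of dimension $\dim X_j$, certified by a Noether normalisation $\QQ^{\al}[T_{1},\dots,T_{e}]\hookrightarrow \QQ^{\al}[T]/I_j$ together with a primitive element whose minimal polynomial is absolutely irreducible;
\item $I_j\not\subset I_k$ for $j\neq k$.
\end{enumerate}
Each certificate involves finitely many algebraic numbers of controlled degree and height: the Bezout/Nullstellensatz coefficients, the leading coefficients in the normalisation, and the discriminants. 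Let $\Delta$ be the product of the norms to $\QQ$ of the nonzero ones that must stay nonzero. Then $\log\Delta\ll [F_I:\QQ]^{O(1)}d_0^{O_r(1)}h_0$: each norm contributes at most the degree of the field times the height, and the field degree is $[F_I:\QQ]\,d_0^{O_r(1)}$.

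\emph{Step 3 (reduction).} For $\frakp$ coprime to $\Delta$, every certificate reduces to a valid certificate modulo $\frakp$:
\begin{enumerate}
\item integrality and the identities pass to the quotient;
\item finiteness of the normalisation is preserved, so dimensions agree;
\item the primitive element's polynomial stays separable because its discriminant divides $\Delta$.
\end{enumerate}

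\emph{Main obstacle.} The hard part is the absolute irreducibility of $V((I_j)_\frakp)$. Absolute irreducibility is not a closed condition, so I would certify it with an effective Noether/Ostrowski form: a finite family of polynomials in the coefficients that vanish exactly on reducible specialisations, evaluated at our coefficients. Some nonzero value is then included in $\Delta$. The difficulty is keeping the degrees of these forms polynomial in $d_0$, and hence $\log\Delta$ linear in $h_0$. I would try Ruppert's or Kaltofen's polynomial-degree bounds first, after reducing to plane curves via a Bertini-type effective hyperplane section.
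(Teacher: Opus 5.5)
\paragraph{Comparison of routes.} The paper gives no argument for this statement. It imports it, slightly weakened and with the Frobenius clauses dropped, from Theorems~1.1--1.2 of \cite{BeckerBreuillardReductions}, where it comes from explicit Gr\"obner basis estimates. Your plan is therefore a genuinely different, self-contained route: a geometric resolution of each component, reduction certificates, and $\Delta$ the product of the norms of everything that must stay nonzero. It is a reasonable route. Its quantitative core, however, is only asserted: the height bound $O_r(d_0^{O_r(1)}h_0)$ of Step~1 and the analogous bounds for all certificate data. That is precisely what the companion paper supplies. Arithmetic B\'ezout bounds heights of Chow forms, not of generators of prime ideals. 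This is enough for (1)--(2), which do not require $I_j$ to be prime, because equations of degree $\deg X_j\le d_0^r$ cutting out $X_j$, as well as the resolution data $P,Q_i$, can be read off the Chow form of $X_j$. Conversely, what you call the main obstacle is already handled in the literature. Effective Noether irreducibility forms of degree polynomial in $\deg P\le d_0^{r}$ are available (Ruppert, Kaltofen), and they keep $\log\Delta$ linear in $h_0$; the small characteristics where such forms may fail can be absorbed into $\Delta$.

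\paragraph{The gap in Step 3.} Integrality of the Noether normalisation modulo $\frakp$ gives only $\dim V((I_j)_\frakp)\le\dim X_j$. Absolute irreducibility and separability of $P_\frakp$ also need the parametrisation relations $\partial_uP\cdot T_i-Q_i(T_1,\dots,T_e,u)\in I_j$, which you must carry as explicit certificates with $\frakp$-integral coefficients. Even then they show only that $V((I_j)_\frakp)\cap\{\partial_uP_\frakp\neq0\}$ injects onto a closed subset of the irreducible set $V(P_\frakp)\cap\{\partial_uP_\frakp\neq0\}$. Nothing in your certificates forces this subset to be everything. Nor do they exclude an empty reduction, or extra components inside the branch locus $\{\partial_uP_\frakp=0\}$.

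\paragraph{The missing idea: flatness.} Put $\calO=\calO_{LF_I}$, let $\calO_\frakp$ be its localisation at $\frakp$, and set $\mathfrak q_j=I(X_j)$, $e=\dim X_j$. By the paper's convention, $S_\frakp$ is the image of $S\cap\calO[T]$. Hence quotients $\calO_\frakp[T]/(J\cap\calO_\frakp[T])$ are torsion-free, so all their minimal primes lie over $(0)$. It follows that $V(J_\frakp)$ depends only on $V(J)$ whenever $F_J\subseteq LF_I$. So you may replace $I_j$ by $\mathfrak q_j$. Moreover $V(I_\frakp)=\bigcup_jV((I_j)_\frakp)$ holds for every $\frakp$, which makes your Nullstellensatz certificate superfluous.

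Now $A_j=\calO_\frakp[T]/(\mathfrak q_j\cap\calO_\frakp[T])$ is a torsion-free domain, hence flat over the DVR $\calO_\frakp$. So its special fibre is empty or equidimensional of dimension $e$. The argument then runs as follows.
\begin{enumerate}
\item Lying-over for the integral injection $\calO_\frakp[T_1,\dots,T_e]\hookrightarrow A_j$ excludes emptiness.
\item An $e$-dimensional component inside $\{\partial_uP_\frakp=0\}$ would map onto $\AA^e$, yet its image lies in the zero set of $\mathrm{disc}_u(P_\frakp)\neq0$. So every component meets $\{\partial_uP_\frakp\neq0\}$.
\item There, its $e$-dimensional image must fill the irreducible set $V(P_\frakp)\cap\{\partial_uP_\frakp\neq0\}$. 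Hence $V((I_j)_\frakp)$ is irreducible of dimension $e$.
\end{enumerate}

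\paragraph{Non-nesting.} Certify it through the resolution: take a $\frakp$-unit coefficient in the remainder modulo $P$ of $g$ rewritten in $T_1,\dots,T_e,u$. Testing $g_\frakp\notin(I_j)_\frakp$ does not work, because reductions need not be radical: $x^2+py$ reduces to $x^2$.
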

\begin{proof}
The statement of this theorem combines the statements of (slightly weakened version of) Theorems 1.1 and 1.2 of \cite{BeckerBreuillardReductions}, dropping any mention of the Frobenius actions since they are not needed in the present paper.
\end{proof}

\begin{prop}
\label{prop:inclusion-preserved-mod-p}
Let $I$ and $J$ be
ideals of
$\QQ^{\al}[T_1,\dotsc,T_r]$,
 with
complexity at most $\left(d_{0},h_{0}\right)$.
Then there is $\Delta\in\NN$, $\log\Delta\ll_r [F_{I}F_{J}:\QQ]^{O(1)}d_{0}^{O_{r}\left(1\right)}h_{0}$,
such that for every $\frakp\in\mspec\calO_{F_{I}F_{J}}$  with $\gcd(\calN\frakp,\Delta)=1$ we have  $V(I_{\frakp})\subseteq V(J_{\frakp})$ if and only if $V(I)\subseteq V(J)$.
\end{prop}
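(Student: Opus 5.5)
The plan is to use the reformulation $V(I)\subseteq V(J) \iff V(I)=V(I+J)$ and to prove the two implications separately. The forward implication comes from an effective Nullstellensatz. The reverse implication comes from Theorem \ref{thm:good-reductions} applied to the ideals $I_j+J$, where the $I_j$ cut out the components of $V(I)$. Throughout, I fix generators $g_1,\dots,g_s\in F_I[T]$ of $I$ and $g'_1,\dots,g'_{s'}\in F_J[T]$ of $J$ realizing complexity $(d_0,h_0)$. I clear denominators so that they lie in $\calO_{F_IF_J}[T]$, and I put the norms of the denominators cleared into $\Delta$; their logarithms are $\ll [F_IF_J:\QQ]h_0$.

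A caveat concerns the meaning of $I_\frakp$, which is the image of $I\cap\calO[T]$ and not just the ideal generated by the reduced generators. Since the proposition is only about zero sets, it suffices to know that $V(I_\frakp)=V((\bar g_1,\dots,\bar g_s))$ for $\frakp\nmid\Delta$. The inclusion "$\subseteq$" is immediate. For the reverse, any $f\in I\cap\calO[T]$ can be written as $f=\sum a_ig_i$ with $a_i\in F_I[T]$. An effective membership bound (from the Groebner basis estimates of \cite{BeckerBreuillardReductions} quoted above) bounds the degrees and heights of the $a_i$ by $d_0^{O_r(1)}$ and $d_0^{O_r(1)}h_0$, so the denominators of the $a_i$ are controlled. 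Since the $a_i$ depend on $f$ while $\Delta$ must not, I would instead bound the denominators of a reduced Groebner basis of $I$ and of the transition matrix to the $g_i$. Putting their norms into $\Delta$ then gives the equality of zero sets. I expect this bookkeeping to be the main obstacle; the rest is fairly formal.

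($\Leftarrow$, i.e. $V(I)\subseteq V(J)\Rightarrow V(I_\frakp)\subseteq V(J_\frakp)$.) Suppose $V(I)\subseteq V(J)$. Then each $g'_k$ lies in $\sqrt I$. The effective Nullstellensatz gives $(g'_k)^N=\sum_i a_{ik}g_i$ with $N\le d_0^{O_r(1)}$, $\deg a_{ik}\le d_0^{O_r(1)}$, and $h(a_{ik})\ll_r d_0^{O_r(1)}h_0$. The $a_{ik}$ can be taken with coefficients in $F_IF_J$ by Galois averaging, or by solving the linear system over that field. I include the norms of their denominators in $\Delta$. Reducing modulo $\frakp$ then shows that each $\bar g'_k$ vanishes on $V(\bar g_1,\dots,\bar g_s)=V(I_\frakp)$, hence $V(I_\frakp)\subseteq V(J_\frakp)$.

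($\Rightarrow$, by contrapositive.) Suppose $V(I)\not\subseteq V(J)$, so some irreducible component $X_j$ of $V(I)$ is not contained in $V(J)$. Apply Theorem \ref{thm:good-reductions} to $I$. This gives an ideal $I_j$ of complexity $(d_0^{O_r(1)},O_r(d_0^{O_r(1)}h_0))$ with $V(I_j)=X_j$, such that $V((I_j)_\frakP)$ is irreducible of dimension $\dim X_j$ for good primes $\frakP$ of $\calO_{LF_I}$. Next apply the theorem to $I_j+J$, generated by the union of the two generating sets, so with the same complexity bounds. Since $X_j\cap V(J)\subsetneq X_j$ and $X_j$ is irreducible, every component of $V(I_j+J)$ has dimension $<\dim X_j$, and for good primes the same holds for $V((I_j+J)_\frakP)=V((I_j)_\frakP)\cap V(J_\frakP)$; here I use the generator compatibility from the first paragraph once more. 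Hence $V((I_j)_\frakP)\not\subseteq V(J_\frakP)$. Since $V((I_j)_\frakP)\subseteq V(I_\frakP)$, it follows that $V(I_\frakP)\not\subseteq V(J_\frakP)$.

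It remains to pass from primes $\frakP$ of the larger field to $\frakp\in\mspec\calO_{F_IF_J}$. I take any prime $\frakP$ above $\frakp$ in the compositum with $L$ and with $F_{I_j}$. Since $\calN\frakP$ is a power of $\calN\frakp$, the coprimality condition $\gcd(\calN\frakP,\Delta)=1$ is inherited. Containment of zero sets over $\overline{\calO/\frakp}$ can be checked after this residue field extension. The degrees $[F_{X_j}F_I:F_I]\le d_0^{O_r(1)}$ from Theorem \ref{thm:good-reductions}(2) keep the dependence of $\log\Delta$ on the field degree of the form $[F_IF_J:\QQ]^{O(1)}d_0^{O_r(1)}$. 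Taking $\Delta$ to be the product of all the $\Delta$'s produced above gives $\log\Delta\ll_r[F_IF_J:\QQ]^{O(1)}d_0^{O_r(1)}h_0$, as claimed.
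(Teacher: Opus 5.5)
Your argument is correct, but it is organized differently from the paper's proof, which amounts to two citations. For the forward direction the paper takes a Groebner basis $G\subset\calO_{F_I}[T_1,\dotsc,T_r]$ of $I$ of controlled degree and height, lets $\Delta_1$ be the norm of the product $\alpha$ of its leading coefficients, and invokes \cite[Lemma 6.5]{BeckerBreuillardReductions} when $\alpha\notin\frakp$. The converse is quoted directly from \cite[Lemma 6.9]{BeckerBreuillardReductions}.

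Your converse instead deduces non-inclusion from Theorem \ref{thm:good-reductions} itself. A component $X_j\not\subseteq V(J)$ reduces to an irreducible $V((I_j)_\frakP)$ of dimension $\dim X_j$, whereas every component of $V((I_j+J)_\frakP)$ has dimension $<\dim X_j$. Theorem \ref{thm:good-reductions}(2) keeps $[F_{I_j}F_J:\QQ]$ within $d_0^{O_r(1)}[F_IF_J:\QQ]$. This gives a self-contained reduction to the main theorem of \cite{BeckerBreuillardReductions}, and it explains why the converse holds: dimension is preserved for good primes.

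The price is twofold. First, you need the generator compatibility $V((I_j)_\frakP)\cap V(J_\frakP)\subseteq V((I_j+J)_\frakP)$. This requires a Groebner basis of $I_j+J$ with leading coefficients prime to $\frakP$, plus a controlled transition to the union of generators; this is the same leading-coefficient mechanism as the paper's $\alpha$. Second, your auxiliary field must also contain $F_{I_j+J}$ and the compositum of the fields of definition of the components of $V(I_j+J)$. Your last paragraph omits both.

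Your forward direction, on the other hand, is heavier than needed. Write $R=\calO_{F_IF_J}[T_1,\dotsc,T_r]$. Since $I_\frakp$ is by definition the image of the saturated ideal $I\cap R$, the hypothesis $J\subseteq\sqrt I$ gives, for every $f\in J\cap R$, some power $f^N\in I\cap R$, hence $\bar{f}^N\in I_\frakp$. So $V(I_\frakp)\subseteq V(J_\frakp)$ for every $\frakp$, with no effective Nullstellensatz and no contribution to $\Delta$. Also, as you wrote it, that step needs the hard half of generator compatibility for $J$, namely $V(\bar{g}'_1,\dotsc,\bar{g}'_{s'})\subseteq V(J_\frakp)$, not for $I$.
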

\begin{proof}
Assume that $V(I)\subset V(J)$.
By \cite[Proposition 4.17]{BeckerBreuillardReductions}, there is a Groebner basis $G\subset \calO_{F_I}[T_1,\dotsc,T_r]$ for $I$ (for any fixed monomial ordering) such that $\max_{g\in G}\deg g\ll_r d_0^{O_r(1)}$, $|G| \ll_r d_0^{O_r(1)}$ and
$\max_{g\in G}h_{\max}(g)\ll_r [F_{I}:\QQ]d_0^{O_r(1)}h_0$.
Let $\alpha$ be the product of the leading coefficients of the elements of $G$.
Set
$\Delta_1=|\nm_{F_{I}F_{J}/\QQ}\alpha|$.
Then
$\log\Delta_1\leq [F_{I}F_{J}:\QQ]h(\alpha)\ll_r [F_{I}F_{J}:\QQ]^2d_0^{O_r(1)}h_0$.
Take $\frakp\in\mspec\calO_{F_{I}F_{J}}$ such that $\gcd(\calN\frakp,\Delta_1)=1$.
Then $\alpha\notin\frakp$ and so,
by \cite[Lemma 6.5]{BeckerBreuillardReductions},
$V(I_{\frakp})\subseteq V(J_{\frakp})$.  Conversely, \cite[Lemma 6.9]{BeckerBreuillardReductions} gives an integer $\Delta_2$ satisfying similar bounds for which $V(I_{\frakp})\nsubseteq V(J_{\frakp})$ provided $V(I)\nsubseteq V(J)$ and $\gcd(\calN\frakp,\Delta_2)=1$. Setting $\Delta:=\Delta_1\Delta_2$ yields the desired conclusion.
\end{proof}

The next lemma will be essential to the proof of Theorems \ref{cro-refor} and \ref{dim0}. It gives good control on the number of \emph{types}, that is, geometrically irreducible components, cut out by a large finite family of polynomials.

\begin{lem}
\label{lem:component-of-few-eqs-1}Let $k$ be  an algebraically closed field and let $E$ be some family of polynomials in $k\left[T_{1},\dotsc,T_{r}\right]$.  Then, for every irreducible component $C$ of $V\left(E\right)$,
there are distinct $g_{1},\dotsc,g_{s}\in E$, $s\leq r-\dim C$,
and $C=C_{s}\subsetneq C_{s-1}\subsetneq\cdots\subsetneq C_{0}=\AA_{k}^{r}$,
such that $C_{i}$ is an irreducible component of $C_{i-1}\cap V\left(g_{i}\right)$
for each $1\leq i\leq s$. In particular, if $d_0=\max_{f \in E}  \deg f$, then there are at most $2|E|^r d_0^{r(r+1)/2}$ closed irreducible subvarieties of $\AA_{k}^{r}$ that arise as irreducible components of $V(E')$ for some $E'\subset E$.
\end{lem}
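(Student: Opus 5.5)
We will build the chain greedily, one equation at a time, by induction on codimension, and then count chains to get the bound.

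\emph{Constructing the chain.} Let $C$ be an irreducible component of $V(E)$. Set $C_0=\AA_k^r$. Suppose $C_{i-1}$ is irreducible with $C\subseteq C_{i-1}$. If $C_{i-1}=C$ we stop. Otherwise $C_{i-1}\not\subseteq V(E)$, since $C$ is a maximal irreducible closed subset of $V(E)$ and $C\subsetneq C_{i-1}$. So some $g_i\in E$ does not vanish identically on $C_{i-1}$. Then $C\subseteq C_{i-1}\cap V(g_i)$, because $g_i$ vanishes on $C$. Choose $C_i$ to be an irreducible component of $C_{i-1}\cap V(g_i)$ containing $C$.

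By Krull's principal ideal theorem, since $g_i$ is not identically zero on the irreducible variety $C_{i-1}$, every component of $C_{i-1}\cap V(g_i)$ has dimension exactly $\dim C_{i-1}-1$. Hence $\dim C_i=r-i$, and the chain is strictly decreasing. The process stops at some $s$ with $C_s=C$, and $s=r-\dim C$, which gives $s\le r-\dim C$ as required. The $g_i$ are distinct: $g_i$ vanishes on $C_i\subseteq C_j$ for all $j\ge i$, so $g_i$ cannot be chosen again at a later step, since later choices must be nonvanishing on $C_{j-1}$.

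\emph{Counting.} Every irreducible component of $V(E')$, for any $E'\subseteq E$, arises from a chain as above with the $g_i$ drawn from $E'\subseteq E$. So it suffices to count all possible chains.

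At step $i$ there are at most $|E|$ choices for $g_i$. The number of components of $C_{i-1}\cap V(g_i)$ is at most $\deg C_{i-1}\cdot d_0$ by the refined B\'ezout inequality (Fulton--Heintz), where $\deg$ is the degree of a variety. Iterating, $\deg C_i\le d_0^{\,i}$. So the number of choices for $C_i$ given $C_{i-1}$ and $g_i$ is at most $d_0^{\,i}$.

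Hence the number of chains of length exactly $s$ is at most $|E|^s d_0^{1+2+\cdots+s}\le |E|^s d_0^{s(s+1)/2}$. Summing over $s=0,\dots,r$ gives at most $\sum_{s\le r}|E|^s d_0^{s(s+1)/2}$. When $|E|\ge 2$ this geometric-type sum is at most $2|E|^r d_0^{r(r+1)/2}$. The trivial cases $|E|\le 1$ or $d_0=0$ are checked directly: for instance, with $|E|=1$ the only varieties arising are $\AA_k^r$ and the components of one hypersurface, at most $1+d_0\le 2d_0^{r(r+1)/2}$ when $d_0\ge1$.

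\emph{Main obstacle.} The only delicate point is the B\'ezout step, namely bounding the number of components of $C_{i-1}\cap V(g_i)$ by $\deg C_{i-1}\cdot\deg g_i$ and keeping $\deg C_i\le d_0^{\,i}$ without needing properness of the intersection beyond what Krull provides. I would invoke the refined B\'ezout theorem, which holds over any algebraically closed field, in the form $\sum\deg(\text{components of }X\cap Y)\le\deg X\deg Y$. This gives both bounds at once, since each component's degree is at least $1$.
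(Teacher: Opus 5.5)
Your proposal is correct and is essentially the paper's proof. It uses the same greedy chain (pick $g_i\in E$ not vanishing on $C_{i-1}$, then take a component of $C_{i-1}\cap V(g_i)$ containing $C$) and the same B\'ezout count of at most $d_0^{i}$ choices for $C_i$. Your use of Krull (giving $s=r-\dim C$ exactly), the distinctness argument and the summation over $s$ only spell out steps the paper leaves implicit.

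There are two small slips. First, the inclusion in the distinctness argument should read $C_j\subseteq C_i$ for $j\ge i$. Second, the cases $E=\emptyset$ or $d_0=0$ cannot be ``checked directly'': for $r\ge 1$ the stated bound is then $0$, while $\AA_k^r=V(\emptyset)$ always arises. So one must assume $E\neq\emptyset$ and $d_0\ge 1$, as the paper tacitly does.
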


\begin{proof}
Let $C_{0}=\AA_{k}^{r}$. We define, by induction, $g_{1},\dotsc,g_{s}\in E$
and $C_{1},\dots,C_{s}$, such that $C_{i}\subsetneq C_{i-1}$, $C_{i}$
is an irreducible component of $C_{i-1}\cap V\left(g_{i}\right)$,
and $C_{i}$ contains $C$.

For $i\geq1$, if $C_{i-1}=C$, we halt with $s=i-1$. Otherwise,
$C_{i-1}$ is irreducible and contains $C$ properly. So $C_{i-1}\nsubseteq V\left(E\right)$,
and thus there is $g_{i}\in E$ such that $C_{i-1}\nsubseteq V\left(g_{i}\right)$.
Let $C_{i}$ be an irreducible component of $C_{i-1}\cap V\left(g_{i}\right)$
that contains $C$. The process eventually halts with $s\leq r-\dim C$
because each $C_{i}$ is
irreducible and $C_{i}\subsetneq C_{i-1}$.

By the above, every irreducible component $C$ of every $V(E')$, $E'\subset E$, arises from $g_1,\cdots,g_s \in E'$ and $C_s\subsetneq\dotsc\subsetneq C_0$ as above. 
For fixed $g_1,\dotsc,g_s \in E$, there are at most $d_0^{1+2+\cdots+s}$ ways to produce a descending chain $C_0,\dotsc,C_s$ as above because once $C_{i-1}$ is chosen there are at most $d_0^i$ possible choices for $C_i$ since the degree of $C_{i-1}\cap V(g_i)$ is at most $d_0^i$ by Bezout's theorem.

\end{proof}

\begin{prop}[Bad reduction control]\label{prop:types} Let $\calE\subseteq\calO_F\left[T_{1},\dotsc,T_{r}\right]$
be a finite collection of polynomials,
$F$ a number field.
Write
$d_0=\max_{f\in\calE}
\deg f$
and
$h_0=\max_{f\in\calE}
\max\{h_{\max}(f),1\}$.
Then there is $\Delta\in\NN$,
$\log\Delta\ll_r |\calE|^{r+1}[F:\QQ]^{O(1)}d_0^{O_{r}\left(1\right)}h_0$, such that
for every  prime ideal $\frakp$
of $\calO_F$ with $\gcd(\calN\frakp,\Delta)=1$, and every pair of subcollections $E^1, E^2\subseteq\calE$, $V\left(E^1\right)\subseteq V\left(E^2\right)$ if and only if $V\left(E^1_{\frakp}\right)\subseteq V\left(E^2_{\frakp}\right)$.
\end{prop}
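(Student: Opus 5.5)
The strategy is to reduce the statement to Theorem \ref{thm:good-reductions} and Proposition \ref{prop:inclusion-preserved-mod-p}, applied to a controlled family of irreducible varieties. The family is kept small by Lemma \ref{lem:component-of-few-eqs-1}. The point is that although there are $2^{2|\calE|}$ pairs $(E^1,E^2)$, each inclusion $V(E^1)\subseteq V(E^2)$ is a Boolean combination of inclusions among the types, that is, the irreducible components of the sets $V(E')$ with $E'\subseteq\calE$. Lemma \ref{lem:component-of-few-eqs-1} bounds the number of types by $2|\calE|^r d_0^{r(r+1)/2}$, which is what produces the factor $|\calE|^{r+1}$ rather than an exponential in $|\calE|$.

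First we construct the types compatibly with reduction. By Lemma \ref{lem:component-of-few-eqs-1}, every type $C$ arises from a chain $C_s\subsetneq\cdots\subsetneq C_0=\AA^r$ with $s\le r$, where $C_i$ is a component of $C_{i-1}\cap V(g_i)$ and $g_i\in\calE$. We build these chains inductively with Theorem \ref{thm:good-reductions}. Suppose $C_{i-1}=V(J)$ with $J$ of complexity $(d_{i-1},h_{i-1})$. Apply the theorem to the ideal $J+(g_i)$. This gives ideals for the components, of complexity $(d_i,h_i)=(d_{i-1}^{O_r(1)},O_r(d_{i-1}^{O_r(1)}h_{i-1}))$, together with a bad-prime integer $\Delta$. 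After at most $r$ steps all degrees are $d_0^{O_r(1)}$ and all heights are $O_r(d_0^{O_r(1)})h_0$.

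The fields of definition grow along the chain. By part (2) of the theorem, at each step the field of definition of a component has degree $d_0^{O_r(1)}$ over the previous one, so over $F$ it has degree $[F:\QQ]d_0^{O_r(1)}$. Each $\Delta$ is an integer, so we can take products. Outside the primes dividing these $\Delta$'s, part (3) shows that the reductions of the components of $C_{i-1}\cap V(g_i)$ are exactly the components of the reduction, with the same dimensions.

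Second, we show that for good $\frakp$ the types of $\calE_\frakp$ are exactly the reductions of the types of $\calE$. Fix $E'\subseteq\calE$ and let $C$ be a component of $V(E')$. Then $V(E')$ is the union of its components, each of which is a type. Applying the chain construction to $E'$ shows that $V(E'_\frakp)$ is the union of the reduced types of $E'$. Strictly, the reduction of the ideal $(E')$ must be compared with the ideal generated by the reduced equations. This is what part (3) gives at the top level: it applies to the ideal generated by $E'$ itself, whose complexity is $(d_0,h_0)$ over $F$. However, applying it to all $E'$ would cost a factor $2^{|\calE|}$, so we avoid doing that.

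The cheaper route is the following. Let $\mathcal{T}$ be the set of all types. For each $C\in\mathcal{T}$ and $f\in\calE$, Proposition \ref{prop:inclusion-preserved-mod-p} (with $J=(f)$) lets us arrange that $C\subseteq V(f)$ holds if and only if $C_\frakp\subseteq V(f_\frakp)$. For each pair $C,C'\in\mathcal{T}$ it likewise lets us arrange that $C\subseteq C'$ holds if and only if $C_\frakp\subseteq C'_\frakp$. We then check the two inclusions separately.

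For the inclusion in characteristic zero, note that $V(E^1)\subseteq V(E^2)$ if and only if every component $C$ of $V(E^1)$ lies in $V(f)$ for all $f\in E^2$. Both conditions are decided by data preserved mod $\frakp$, provided the components of $V(E^1_\frakp)$ are exactly the reductions $C_\frakp$.

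To obtain that the components of $V(E^1_\frakp)$ are exactly the $C_\frakp$, we describe components combinatorially. A type $C$ is contained in $V(E^1)$ if and only if $C\subseteq V(f)$ for all $f\in E^1$. The components of $V(E^1)$ are the maximal types with this property. Maximality is preserved because inclusions among types are preserved.

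Conversely, every component of $V(E^1_\frakp)$ must be a reduced type. We get this by running the chain argument of Lemma \ref{lem:component-of-few-eqs-1} in characteristic $p$. Each step takes a component of $C_{i-1,\frakp}\cap V(g_{i,\frakp})$. By the good-reduction choice this is $(C_i)_\frakp$ for some characteristic-zero chain, so by induction every chain mod $\frakp$ lifts.

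The resulting $\log\Delta$ is a sum over at most $|\mathcal{T}|\cdot(|\calE|+|\mathcal{T}|)$ invocations, each of size $[F:\QQ]^{O(1)}d_0^{O_r(1)}h_0$. The chain-construction invocations number at most about $|\calE|^r d_0^{O_r(1)}$. The worst term is the pairwise comparison, which gives $|\mathcal{T}|^2\approx|\calE|^{2r}$ invocations.

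I expect the main obstacle to be meeting the claimed bound $|\calE|^{r+1}$ instead of $|\calE|^{2r}$. The step I would try first is to avoid the pairwise comparison of types. To decide $V(E^1)\subseteq V(E^2)$ we only need the tests $C\subseteq V(f)$ for $C\in\mathcal{T}$ and $f\in\calE$, which cost $|\mathcal{T}||\calE|\approx|\calE|^{r+1}$. Maximality of components should then be certified by the chain structure mod $\frakp$ through part (3) of Theorem \ref{thm:good-reductions}, rather than by comparing types pairwise.

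A second delicate point is the field degrees: the fields $L F_I$ in part (3) must stay of degree $[F:\QQ]d_0^{O_r(1)}$, so that the final prime condition can be stated for $\frakp\in\mspec\calO_F$ through $\gcd(\calN\frakp,\Delta)=1$.
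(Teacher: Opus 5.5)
\textbf{There is a genuine gap.} As written, your argument proves the equivalence only with
$\log\Delta\ll_r|\calE|^{2r}[F:\QQ]^{O(1)}d_0^{O_r(1)}h_0$, because of the roughly $|\mathcal{T}|^2$ pairwise tests $C\subseteq C'$. For $r\geq 2$ this is not the stated bound.

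The repair you sketch does not close it. Part (3) of Theorem \ref{thm:good-reductions} identifies, along a single chain, the components of $C_{i-1,\frakp}\cap V(g_{i,\frakp})$ with reductions of characteristic-zero components. Whether a reduced type lying in $V(E^1_\frakp)$ is \emph{maximal} there is a question about inclusions between endpoints of \emph{different} chains. That is exactly what your pairwise tests were paying for.

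The missing idea is that you never need the components of $V(E^1_\frakp)$ to be the reductions of the components of $V(E^1)$. Two facts suffice:
(a) for every $E\subseteq\calE$, every irreducible component of $V(E_\frakp)$ equals $C_\frakp$ for some $C$ in your family (this is your chain-lifting argument);
(b) for every $C$ in the family and every $f\in\calE$, $C\subseteq V(f)$ if and only if $C_\frakp\subseteq V(f_\frakp)$. Hence $C\subseteq V(E)$ if and only if $C_\frakp\subseteq V(E_\frakp)$ for every $E\subseteq\calE$.

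Now treat the two implications using different decompositions.
\begin{itemize}
\item Suppose $V(E^1)\subseteq V(E^2)$ and let $D$ be a component of $V(E^1_\frakp)$. Write $D=C_\frakp$ by (a). By (b), $C\subseteq V(E^1)\subseteq V(E^2)$, and by (b) again $D\subseteq V(E^2_\frakp)$. It does not matter that $C$ need not be a component of $V(E^1)$.
\item Suppose $V(E^1_\frakp)\subseteq V(E^2_\frakp)$ and let $C$ be a component of $V(E^1)$. Then $C$ is in the family, so (b) gives $C_\frakp\subseteq V(E^1_\frakp)\subseteq V(E^2_\frakp)$, whence $C\subseteq V(E^2)$.
\end{itemize}

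For (a) to hold, the family must consist of all endpoints of chains, not just the components of the sets $V(E')$. These are all leaves of the trees obtained by intersecting successively with $V(g_1),\dots,V(g_s)$ for distinct $g_i\in\calE$, $s\le r$. The lift of a chain mod $\frakp$ is such a leaf, but it is not a priori a component of any $V(E')$.

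There are still at most $|\calE|^rd_0^r$ leaves. So only $|\calE|^{r+1}d_0^r$ applications of Proposition \ref{prop:inclusion-preserved-mod-p} are needed, on top of $|\calE|^r$ trees each costing $[F:\QQ]^{O(1)}d_0^{O_r(1)}h_0$. This gives the stated bound and is precisely the paper's proof.

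Your field-degree concern is not an obstacle. The bounds in Theorem \ref{thm:good-reductions} depend only on $[F_I:\QQ]$, which stays $[F:\QQ]d_0^{O_r(1)}$ along each chain, even though the compositum $K$ of all fields of definition may be large. Since $\gcd(\calN\frakp,\Delta)=1$ just means $p\nmid\Delta$, you can simply work with a prime $\frakP$ of $\calO_K$ above $\frakp$.
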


\begin{rem}
A direct application of Proposition \ref{prop:inclusion-preserved-mod-p}
to each subset $E$ of $\calE$ would lead to the proof of a weak version of Proposition \ref{prop:types}, where
the $|\calE|^{r+1}$ factor in the bound on $\log\Delta$ is replaced
by $2^{|\calE|}$. This bound is too weak for our purposes however, and the improved bound offered by Proposition
\ref{prop:types} via Lemma \ref{lem:component-of-few-eqs-1} will be crucial for our application to uniform spectral
gaps.
\end{rem}

\begin{proof}[Proof of Proposition \ref{prop:types}]
We shall define integers
$\Delta_{1},\Delta_{2} \in\NN$ with
\begin{equation}
\log\Delta_{1}\Delta_{2}\ll_r |\calE|^{r+1}[F:\QQ]^{O(1)}d_0^{O_{r}\left(1\right)}h_0\,\,\text{,}\label{eq:types-proof-bound-on-deltas}
\end{equation}
and a collection $\calB$ of ideals $J$
of $\QQ^{\al}[T_1,\dotsc,T_r]$ such that each geometrically irreducible component of $V(E)$ for each $E \subset \calE$  is of the form $V(J)$ for $J \in \calB$ and such that, if $K$ denotes the compositum of $\{F\}\cup\{F_{J}\mid J\in\calB\}$, then for every $\frakP\in\mspec\calO_{K}$ and $E\subseteq\calE$:
\begin{enumerate}
\item If $\gcd(\calN\frakP,\Delta_1)=1$, then each geometrically irreducible component of $V(E_{\frakP})$ is $V(J_{\frakP})$ for some  $V(J)$, $J\in \calB$, and 
\item If $\gcd(\calN\frakP,\Delta_2)=1$ and $J\in\calB$,
 then $V(J_{\frakP})\subseteq V\left(E_{\frakP}\right)$
if and only if $V(J)\subseteq V\left(E\right)$.
\end{enumerate}

This will prove the proposition with $\Delta=\Delta_1\Delta_2$. Indeed, if $\frakp\in\mspec\calO_F$ has  $\gcd(\calN\frakp,\Delta)=1$, we may take  $\frakP\in\mspec \calO_K$ a prime ideal lying over $\frakp$.
Then $\gcd(\calN\frakP,\Delta)=1$. 
To prove the equivalence between $V(E^1) \subset V(E^2)$ and $V(E^1_{\frakp}) \subset V(E^2_{\frakp})$
 we argue componentwise as follows.
 Consider a component of $V(E^1_{\frakp})$,
 which by (1) must be of the form $V(J_{\frakP})$ for some $J$ in $\mathcal{B}$. By (2), since $V(J_{\frakP}) \subset V(E^1_{\frakP})$, we know that $V(J)\subset V(E^1)$. If $V(E^1)\subset V(E^2)$, then $V(J)\subset V(E^2)$ as well, and by (2), $V(J_{\frakP})\subset V(E^2_{\frakP})$. This shows the forward implication. To see the converse, consider a component of $V(E^1)$. It is of the form $V(J)$ for some $J$ in $\calB$. By (2), $V(J_{\frakP}) \subset V(E^1_{\frakP})$, hence $V(J_{\frakP}) \subset V(E^2_{\frakP})$ assuming $V(E^1_{\frakp}) \subset V(E^2_{\frakp})$. By (2) again $V(J)$ lies in $V(E^2)$. This proves the proposition.

It remains to define $\calB$, $\Delta_{1}$, $\Delta_{2}$ satisfying (1)-(2) and \eqref{eq:types-proof-bound-on-deltas}. Write $\calE_{\distinct}^{\leq r}$ for the set of all tuples $\left(g_{1},\dotsc,g_{s}\right)\in\calE^{s}$
such that $g_{1},\dotsc,g_{s}$ are distinct and $s\leq r$. Then
$|\calE_{\distinct}^{\leq r}|\leq|\calE|^{r}$.
Fix $G=\left(g_{1},\dotsc,g_{s}\right)\in\calE_{\distinct}^{\leq r}$,
$s\leq r$.
We shall define a rooted tree $T_{G}^{\infty}$,
where each leaf is at
distance $s$ from the root. For each vertex $x$ of $T_{G}^{\infty}$
we shall associate a closed subvariety $V_{x}^{\infty}$
of $\AA_{\QQ^{\al}}^{r}$.

The tree $T_{G}^{\infty}$ and the varieties $V_{x}^{\infty}$, where
$x$ runs over the set of vertices of $T_{G}^{\infty}$, are defined
level-by-level as follows. For the root $x_{0}$ of $T_{G}^{\infty}$,
set $V_{x_{0}}^{\infty}=\AA_{\QQ^{\al}}^{r}$. For a vertex $x$
of $T_{G}^{\infty}$, at distance $0\leq\ell\leq s-1$ from the root,
the set of children of $x$ is in bijective correspondence with the
set of irreducible components of $V_{x}^{\infty}\cap V\left(g_{\ell+1}\right)$.
 Let $K_{G}$ be the compositum of $\{F_{V_x^{\infty}}\}_{x}$,
where $x$ runs over the vertices of $T^{\infty}_{G}$.

Let $K$ be the compositum of $\{F\}\cup\{K_{G}\}_{G}$, where $G$ runs over $\calE_{\distinct}^{\leq r}$.
For each $G\in\calE_{\distinct}^{\leq r}$ and $\frakP\in\mspec\calO_{K}$,
we shall define a tree $T_{G}^{\frakP}$,
and one variety $V_{y}^{\frakP}$ for each vertex $y$ of $T_{G}^{\frakP}$.
The construction of the tree $T_{G}^{\frakP}$
and the varieties $V_{y}^{\frakP}$ is similar to the construction of
$T_{G}^{\infty}$ and $V_{x}^{\infty}$ above, with the following
changes: For the root $x_{0}^{\frakP}$ of $T_{G}^{\frakP}$, we set $V_{x_{0}^{\frakP}}^{\frakP}$
equal to $\AA_{(\calO_K/\frakP)^{\al}}^{r}$ rather than $\AA_{\QQ^{\al}}^{r}$,
and use the polynomials $\left(g_{1}\right)_{\frakP},\dots,\left(g_{s}\right)_{\frakP}$
instead of $g_{1},\dotsc,g_{s}$ (where $g_{\frakP}$ is the reduction
mod $\frakP$ of $g\in\calO_K\left[T_{1},\dotsc,T_{r}\right]$).

Next, for each vertex $x$ of $T_{G}^{\infty}$, we shall define
an ideal $J_{x}$
of $\QQ^{\al}[T_1,\dotsc,T_r]$
such that
$V(J_{x})=V_{x}^{\infty}$.
We set
$J_{x_{0}}=(0)$
for the root $x_{0}$ of $T_{G}^{\infty}$.
Recall that the rooted tree $T_{G}^{\infty}$ has $s+1$ levels, and
$s\leq r$. Thus, by B\'{e}zout's inequality 
 $T_{G}^{\infty}$ has at most $d_{0}^{r}$ leaves and no more than $\max\left\{ r,2d_{0}^{r}\right\}$
vertices (considering separately the cases $d_0=1$ and $d_0 \geq 2$). By repeated application of Theorem \ref{thm:good-reductions},
we obtain an ideal
$J_x$ for each vertex $x$ of $T_{G}^{\infty}$, such that:\renewcommand{\labelenumi}{\Roman{enumi})}
\begin{enumerate}
\item For every vertex $x$ of $T_{G}^{\infty}$:
\begin{enumerate}
\item $V(J_x)=
V_{x}^{\infty}$ and $F_{J_{x}}=F_{V_{x}^{\infty}}$.
\item $J_{x}$ has complexity at most $(d_{0}^{O_{r}(1)},
O_r(d_{0}^{O_{r}(1)}h_{0}))$.
\end{enumerate}
\item There is $\Delta_{1,G}\in\NN$, $\log\Delta_{1,G}\ll_r [F:\QQ]^{O(1)}d_{0}^{O_{r}\left(1\right)}h_{0}$,
such that for every
$\frakP\in\mspec\calO_{K}$
such that
$\gcd(\calN\frakP,\Delta_{1,G})=1$,
there is a bijection $\varphi_{G}^{\frakP}$ from the set of vertices of $T_{G}^{\infty}$
to the set of vertices of $T_{G}^{\frakP}$, such that:
\begin{enumerate}
\item $\varphi_{G}^{\frakP}$ induces an isomorphism of rooted trees.
\item
$V((J_{x})_\frakP)=
V_{\varphi_{G}^{\frakP}\left(x\right)}^{\frakP}$
for each vertex $x$ of $T_{G}^{\infty}$.
\end{enumerate}
\end{enumerate}

\textit{The definitions of $\Delta_{1}$ and $\calB$:} Set $\Delta_{1}=\prod_{G}\Delta_{1,G}$,
where $G$ runs over $\calE_{\distinct}^{\leq r}$. Then $\log\Delta_{1}\ll
|\calE|^{r}[F:\QQ]^{O(1)} d_{0}^{O_{r}\left(1\right)}h_{0}$.
Now, take $\frakP\in\mspec\calO_{K}$
such that $\gcd(\calN\frakP,\Delta_{1})=1$, let
$E\subseteq\calE$, and take an irreducible component $C$
of $V\left(E_{\frakP}\right)$. By Lemma \ref{lem:component-of-few-eqs-1},
there is $G\in\calE_{\distinct}^{\leq r}$, such that $C=V_{y}^{\frakP}$
for some leaf $y$ of $T_{G}^{\frakP}$. By II(b), and since $\gcd(\calN\frakP,\Delta_{1,G})=1$,
there is a leaf $x$ of $T_{G}^{\infty}$ such that $V_{y}^{\frakP}=
V((J_{x})_\frakP)$.
Let 
\[
\calB=\left\{ J_{x}\mid G\in\calE_{\distinct}^{\leq r}\text{, \ensuremath{x} is a leaf of \ensuremath{T_{G}^{\infty}}}\right\} \,\,\text{.}
\]
Then (1) holds. Furthermore 
\begin{equation}
|\calB|\leq|\calE|^{r}d_{0}^{r}\label{eq:types-proof-bound-on-B}
\end{equation}
since each tree $T_{G}^{\infty}$ has at most $d_{0}^{r}$ leaves.

\textit{The definition of $\Delta_{2}$:} Take $J\in\calB$ and $f\in\calE$. By Proposition
\ref{prop:inclusion-preserved-mod-p}, there is $\Delta_{2,J,f}\in\NN$,
$\log\Delta_{2,J,f}\ll_r [F_{J}F:\QQ]^{O(1)}d_{0}^{O_{r}\left(1\right)}h_{0}\ll_r 
[F:\QQ]^{O(1)}d_{0}^{O_{r}\left(1\right)}h_{0}$, such that
for every $\frakP\in\mspec\calO_{K}$  with $\gcd(\calN\frakP,\Delta_{2,J,f})=1$, $V\left(J_{\frakP}\right)\nsubseteq V\left(\{f\}_{\frakP}\right)$ if and only if $V(J)\nsubseteq V(\{f\}).$ 
Set $\Delta_{2}=\prod_{J,f}\Delta_{2,J,f}$,
where $J$ runs over $\calB$ and $f$ runs over $\calE$. Then $\log\Delta_{2}\ll_r |\calE|^{r+1}[F:\QQ]^{O(1)} d_{0}^{O_{r}\left(1\right)}h_{0}$
by \eqref{eq:types-proof-bound-on-B}.
Take $\frakP\in\mspec\calO_{K}$  with $\gcd(\calN\frakP,\Delta_{2})=1$, $E\subseteq\calE$
and $J\in\calB$. We thus have  $V(J_{\frakP})\subseteq V\left(E_{\frakP}\right)$ if and only if $V(J) \subseteq V(E)$. Thus (2) holds. 

\end{proof}

Finally, we record the following lemma. It gives a bound on the degree of subvarieties defined by the vanishing of words of length $\ell$. For a word $w$ on $k$ letters and their inverses, say $\{ s_{1},\dotsc,s_{k},s_{1}^{-1},\dotsc,s_{k}^{-1}\}$,  and elements $g_{1},\dotsc,g_{k}$ of a group $G$, write $w(g_{1},\dotsc,g_{k})$
for the element of $G$ resulting from the substitution $s_{i}\leftarrow g_{i}$.
Recall that $F_0$ is either $\QQ$ or $\FF_p(t)$ and $F_0^{\al}$ is an algebraic closure of $F_0$.

\begin{lem}
\label{lem:deg-height--word-eqs}For such a word $w$ of length
$\ell$ let
\[
Y_{w}=\left\{ \left(g_{1},\dotsc,g_{k}\right)\in \SL_{n}(F_0^{\al})^{k}\mid w\left(g_{1},\dotsc,g_{k}\right)=1\right\} \,\,\text{.}
\]
We view $\SL_{n}$ as a closed subvariety of $M_{n\times n}$, and identify the latter with the affine space $\AA_{F_0^{\al}}^{n^{2}}$. Then
$Y_{w}=\SL_{n}(F_0^{\al})^{k}\cap V(I_{F_0^{\al}})$
where $I$ is an ideal of $R:=\ZZ[\{ T_{t,i,j}\} _{i,j\in[n], t\in[k]}]$
generated by $n^{2}$ integer polynomials $f_{1},\dotsc,f_{n^{2}}$ of degree  at most $n\ell$ and whose coefficients have
absolute value at most $\left(n!\right)^\ell$, and $I_{F_0^{\al}}=I \otimes_\ZZ F_0^{\al}$.
\end{lem}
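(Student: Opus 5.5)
The plan is to write the word map explicitly in polynomial form, using the fact that on $\SL_n$ the inverse of a matrix equals its adjugate. For each $t\in[k]$ let $X_t=(T_{t,i,j})_{i,j}$ be the generic $n\times n$ matrix over $R$, and let $\adj(X_t)$ be its adjugate. The entries of $\adj(X_t)$ are signed $(n-1)\times(n-1)$ minors. Each is a homogeneous integer polynomial of degree $n-1$, with at most $(n-1)!$ monomials and coefficients $\pm1$. On $\SL_n$ we have $X_t^{-1}=\adj(X_t)$ because $\det X_t=1$.

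Write $w=s_{t_1}^{\epsilon_1}\cdots s_{t_\ell}^{\epsilon_\ell}$. Replace each letter $s_t$ by $X_t$ and each $s_t^{-1}$ by $\adj(X_t)$, and let $P_w\in M_n(R)$ be the resulting product. Take $f_{(i,j)}=(P_w)_{ij}-\delta_{ij}$, which gives $n^2$ polynomials. For $(g_1,\dots,g_k)\in\SL_n(F_0^{\al})^k$ we have $P_w(g)=w(g)$, so $w(g)=1$ exactly when all $f_{(i,j)}$ vanish at $g$. This gives $Y_w=\SL_n(F_0^{\al})^k\cap V(I_{F_0^{\al}})$ for $I=(f_1,\dots,f_{n^2})$. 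The base change is harmless here: $V(I_{F_0^{\al}})$ is cut out by the images of the $f_i$ in $F_0^{\al}[T]$, and in characteristic $p$ these images are the $f_i$ reduced mod $p$, which still satisfy $P_w(g)=w(g)$ because the adjugate identity holds over $\ZZ$.

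The degree bound is immediate. Each factor has entries of degree at most $n-1\le n$, and there are $\ell$ factors, so the entries of $P_w$ have degree at most $n\ell$. Subtracting $\delta_{ij}$ does not increase this for $\ell\ge1$; for $\ell=0$ the $f_i$ are zero and the statement is trivial.

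The step needing care is the coefficient bound. I would control the stronger quantity $\|f\|_1$, the sum of absolute values of the coefficients, which is submultiplicative and subadditive. Define $M(A)=\max_{i}\sum_j\|A_{ij}\|_1$ for a polynomial matrix $A$. Then $M(AB)\le M(A)M(B)$. We have $M(X_t)=n$, since each row has $n$ entries of norm $1$, and $M(\adj X_t)\le n\cdot(n-1)!=n!$. Hence $M(P_w)\le (n!)^\ell$, which bounds $\|(P_w)_{ij}\|_1$ and therefore every coefficient by $(n!)^\ell$.

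The remaining worry is the $-\delta_{ij}$ term. It only alters the constant term, and the constant term of $(P_w)_{ii}$ is $0$ when $\ell\ge1$, since every entry is homogeneous of positive degree. So the coefficients of $f_{(i,i)}$ have absolute value at most $\max\{1,(n!)^\ell\}=(n!)^\ell$. I do not expect any real obstacle; the only points to get right are the adjugate substitution and the choice of $M$ as the submultiplicative norm.
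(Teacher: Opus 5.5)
Your proof is correct and follows the paper's route: you substitute $X_t$ and $\adj X_t$ for $s_t^{\pm1}$, use additivity of degrees, and bound the coefficients multiplicatively one letter at a time. The difference is only in the bookkeeping. The paper tracks the largest coefficient together with support sizes, via $\ell_\infty(fg)\le\min\{|\supp f|,|\supp g|\}\,\ell_\infty(f)\ell_\infty(g)$ and the fact that each new factor has entries with at most $(n-1)!$ monomials, which gives the slightly sharper $(n!)^{\ell-1}$. Your submultiplicative $\ell^1$ row-sum norm gives $(n!)^\ell$ more transparently, and your handling of the $-\delta_{ij}$ constant term makes explicit a point the paper leaves implicit.
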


\begin{proof}
For a word $w$ and $A_{1},\dotsc,A_{k}\in M_{n\times n}\left(R\right)$,
write $w\left[A_{1},\dotsc,A_{k}\right]$ for the element of $M_{n\times n}\left(R\right)$
resulting from applying the substitution $s_{i}\leftarrow A_{i}$, $s_{i}^{-1}\leftarrow\adj A_{i}$
to $w$ (where $\adj A=\left(\left(-1\right)^{i+j}M_{ji}\right)_{ij}\in M_{n\times n}\left(R\right)$,
where $M_{ji}$ is the $\left(j,i\right)$-minor of $A$). Let $X_{t}=\left(T_{t,i,j}\right)_{i,j\in\left[n\right]}\in M_{n\times n}\left(R\right)$
for each $t\in\left[k\right]$.

For a multivariate complex polynomial $f$, write $\supp f$
for the multiset of nonzero coefficients of $f$ and 
$\ell_{\infty}(f)$ for  the absolute value of the largest
coefficient. For a matrix $A\in M_{n\times n}\left(R\right)$,
let $\ell_{\infty}\left(A\right)=\max_{i,j}\ell_{\infty}\left(A_{ij}\right)$
and $\deg A=\max_{i,j\in\left[n\right]}\deg A_{ij}$.

Clearly, $\deg\left(\adj A\right)\leq n\deg\left(A\right)$, and $\deg\left(AB\right)\leq\deg\left(A\right)+\deg\left(B\right)$
for $A,B\in M_{n\times n}\left(R\right)$. Thus, for a word $w$
of length $\ell$ and $A_{1},\dotsc,A_{k}\in M_{n\times n}\left(R\right)$,
we have $\deg\left(w\left[A_{1},\dotsc,A_{k}\right]\right)\leq n\ell\max\left\{ \deg\left(A_{1}\right),\dotsc,\deg\left(A_{k}\right)\right\} $.
So
\[
\deg\left(w\left[X_{1},\dotsc,X_{k}\right]\right)\leq n\ell
\]
because $\deg X_{t}=1$ for each $t\in\left[k\right]$.

Clearly, $\ell_{\infty}\left(fg\right)\leq\min\left\{ |\supp f|,|\supp g|\right\} \ell_{\infty}\left(f\right)\ell_{\infty}\left(g\right)$
for all $f,g\in R$. Thus, for $A,B\in M_{n\times n}\left(R\right)$
and $i,j\in\left[n\right]$, denoting 
\[
m_{0}=\min\left\{ \max_{i,j}|\supp A_{ij}|,\max_{i,j}|\supp B_{ij}|\right\} \,\,\text{,}
\]
we have
\begin{align*}
\ell_{\infty}\left(\left(AB\right)_{ij}\right) & \leq\sum_{l=1}^{n}\ell_{\infty}\left(A_{il}B_{lj}\right)\leq nm_{0}\ell_{\infty}\left(A\right)\ell_{\infty}\left(B\right)\,\,\text{.}
\end{align*}
Furthermore, $\ell_{\infty}(X_{t})=1$, $\ell_{\infty}(\adj X_{t})=1$,
$\max_{i,j}|\supp\left(X_{t}\right)_{ij}|=1$ and $\max_{i,j}|\supp(\adj X_{t})_{ij}|=(n-1)!$
for each $t\in\left[k\right]$. Thus,
\begin{equation*}
\ell_{\infty}(w[X_{1},\dotsc,X_{k}]) \leq (n!)^{\ell-1}.
\end{equation*}
\end{proof}

\section{Uniform expansion at almost all primes}\label{sec-prime}

Let $k$ be an algebraically closed field and $\GG$ a connected semisimple algebraic group defined over $k$. We fix an embedding $\GG \hookrightarrow \GL_d$ so we can talk about the degree of closed subvarieties of $\GG$. Let $M\ge 1$ be a parameter. Borrowing terminology from \cite{breuillard-green-tao-linear} (see also \cite{breuillard-standrews}) we will say that a subset $\Omega$ of $\GG(k)$ is $M$-dense if $\Omega$ is not contained in a proper algebraic subvariety of $\GG$ with degree at most $M$. 
Let $\FF_p^{\al}$ be the algebraic closure of $\FF_p$. 

It is known from the work of Larsen and Pink \cite{larsen-pink}, which extends Nori's work \cite{nori},  that when $\GG$ is absolutely almost simple of adjoint type, there is $M_0$ depending only on $\dim \GG$ such that every finite subgroup $G$ of $\GG(k)$ that is $M_0$-dense is a \emph{subfield subgroup} of $\GG$.  This means that $k$ has positive characteristic $p$ and that there is a Frobenius map $F$ of $\GG(k)$ (the composition of a field automorphism and a Dynkin diagram automorphism) such that $G$ is contained in the subgroup of fixed points $H:=\GG(k)^F$ and contains the commutator subgroup $[H,H]$, which has bounded index in $H$ (a uniform bound depending on $\dim \GG$ only). Moreover, apart from a finite number of groups (see \cite[Theorem 2.2.7]{gorenstein-lyons-solomon3}) $[H,H]$ is a finite simple group.  Conversely any such group is $M_0$-dense and every finite simple group of Lie type arises this way. We refer the reader to \cite{breuillard-green-tao-linear}, \cite{breuillard-standrews} and \cite[Section 5]{bggt} for further discussion of these facts in the context of expanders. 

Theorem \ref{cro-refor} from the introduction is thus a special case of the following.

\begin{thm}(uniform expansion at almost all primes)\label{expmain} For every  $r,d\ge 1$ and $\delta>0$, there is $M_0(d,r) \in \NN$ and $\eps=\eps(d,r,\delta)>0$ and a (possibly empty) set of prime numbers $\mathcal{B}_\delta$ with
\begin{equation}
\left|\left\{ p\in\calB_{\delta}\mid p<X\right\} \right|\leq X^{\delta}\qquad\forall X\gg_\delta 1 \label{eq:gaps-bad-primes-sparse}
\end{equation}
such that if  $\GG_\ZZ$ is a semisimple Chevalley $\ZZ$-scheme of $\GL_d$, $q=p^s$ with $s\leq r$ and $p\notin \mathcal{B}_\delta$, and $G\leq \GG_p(\FF_q)$  is an $M_0$-dense subgroup, then $G$ is $\eps$-expanding, i.e. \eqref{expand} holds for all $A,B \subset G$, $|B|\leq |G|/2$, $A \ni 1$ generating. 
\end{thm}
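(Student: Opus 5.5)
We plan to follow the Bourgain--Gamburd strategy as summarized in Remark (c). Since the product and quasirandomness phases are already available uniformly in bounded rank (classification of approximate groups, Landazuri--Seitz), the theorem reduces to the first phase. For a symmetric generating set $S\ni 1$ of $G$, with $|S|$ bounded after passing to a bounded-size generating subset, it suffices to show exponential non-concentration on proper subgroups:
\[
\mu_S^{(\ell)}(H)\le |G|^{-\kappa}\quad\text{for all proper } H<G, \qquad \ell\simeq c\log|G|,
\]
with $\kappa,c>0$ depending only on $(d,r,\delta)$. By Larsen--Pink and the structure of maximal subgroups, every proper subgroup is either contained in a proper algebraic subgroup of $\GG$ of bounded degree, or is itself a subfield subgroup. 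The latter case is handled by a counting/escape argument, or absorbed since subfield subgroups are $M_0$-dense in a smaller field. So the key point is non-concentration on proper algebraic subvarieties $V\subset\GG$ of bounded degree.

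The first step is to lift to characteristic zero. Fix $k$ and consider the free group $F_k$ and the words of length $\le\ell$. For each family of words $W$, the condition ``$w(g)\in V$ for all $w\in W$'' for $V$ a proper subgroup of bounded degree is expressible through the word varieties of Lemma \ref{lem:deg-height--word-eqs} together with bounded-complexity equations for $V$. These are integer polynomials with degree $\le n\ell$ and height $\ll \ell$. Let $\calE$ be the collection of all such polynomials for words of length $\le \ell$, so $|\calE|\le e^{O(\ell)}$.

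The uniform anti-concentration theorem of \cite{becker-breuillard} in characteristic zero says the following. For any tuple $(a_1,\dots,a_k)$ in $\GG(\CC)$ generating a Zariski-dense subgroup, the walk hits any proper subvariety of bounded degree with probability $\le e^{-c\ell}$. In the language of types, this is a statement about which sets of words can simultaneously satisfy an algebraic ``subgroup'' condition on a Zariski-dense tuple: it concerns inclusions $V(E^1)\subseteq V(E^2)$ among subfamilies $E\subseteq\calE$ on the representation variety. By Proposition \ref{prop:types}, these inclusions are preserved modulo every $p$ coprime to a $\Delta_\ell$ with $\log\Delta_\ell\ll e^{O(r\ell)}\ell^{O(1)}$. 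Therefore, for $p\nmid\Delta_\ell$, any generating tuple of $G\le\GG_p(\FF_q)$ has the same type as some characteristic-zero Zariski-dense tuple, and inherits the $e^{-c\ell}$ non-concentration bound up to length $\ell$. Here we use $M_0$-density to ensure that the tuple lies on no proper-subgroup type, and Lang--Weil/Bezout bounds to handle the field-of-definition and degree bookkeeping.

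The main obstacle is arranging the exceptional-prime count. We need $\ell\simeq c\log|G|\simeq c' \log p$, but $\log\Delta_\ell$ is $\exp(O(\ell))$, i.e. the bad primes at scale $\ell$ have size up to $\exp(\exp(O(\ell)))$. The plan is to define $\calB_\delta=\bigcup_\ell\{p\mid\Delta_\ell,\ p\ge e^{\ell/c'}\}$, the primes that are bad at the scale actually needed for them. The number of prime divisors of $\Delta_\ell$ is $\le\log\Delta_\ell\le e^{C\ell}$, and these lie above $X_\ell:=e^{\ell/c'}$. Choosing $c'$ small enough relative to $C$ and $\delta$ (at the cost of shrinking $\eps$) gives $e^{C\ell}\le X_\ell^{\delta}$. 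Summing over $\ell$ with $X_\ell\le X$ then yields \eqref{eq:gaps-bad-primes-sparse}. The delicate point to verify is that the exponent $C$ in $|\calE|^{r+1}$ is genuinely linear in $\ell$; this is exactly where Proposition \ref{prop:types} improves on $2^{|\calE|}$. A shorter walk of length $c'\log p$ still suffices for Bourgain--Gamburd, since $s\le r$ makes $\log|G|\simeq\log p$.
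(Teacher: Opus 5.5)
Your skeleton matches the paper's: Bourgain--Gamburd, characteristic-zero anti-concentration, transfer via Proposition \ref{prop:types} with $\log\Delta_\ell=e^{O(\ell)}$, walks of length $\ell\simeq\delta\log p$, and bad primes counted scale by scale. Your definition of $\calB_\delta$ also works. The gap is in making the transfer cover every proper subgroup.

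\textbf{Subfield subgroups.} For $s\ge 2$, e.g.\ $\SL_2(\FF_p)<\SL_2(\FF_{p^2})$, the subgroup is $M$-dense in $\GG$ for $p$ large, so it lies in no bounded-degree proper subvariety of $\GG$. ``Counting/escape'' or ``absorbed since $M_0$-dense in a smaller field'' therefore yields no non-concentration: $M$-density is exactly what makes such subgroups invisible to algebraic conditions in $\GG$. The paper removes them by restriction of scalars. One has $\GG_p(\FF_q)=(\Res^{\FF_q}_{\FF_p}\GG_p)(\FF_p)$ with $\Res^{\FF_q}_{\FF_p}\GG_p\cong\GG_p^{s}$ over $\FF_p^{\al}$, and Nori's theorem (Lemma \ref{norilem}) gives $G^+=\widetilde G(\FF_p)^+$ for an $\FF_p$-form $\widetilde G$ of some $\GG^{r'}$. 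There, subfield subgroups sit in diagonal-type algebraic subgroups, and for $p$ large $G^+$ is its only subgroup that is $M$-dense in $\widetilde G$. You also need to pass from $G$ to $G^+$ via Lemma \ref{quotoverspec}: bounded-index subgroups such as $G^+<G$ are neither structural nor subfield, and non-concentration fails on them.

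\textbf{Uniformity in $H$.} Your family $\calE$ is not independent of $H$. Proposition \ref{prop:types} transfers inclusions for one fixed finite family of integer polynomials. The relevant $H$, however, lie in arbitrary $\FF_p^{\al}$-conjugates of bounded-degree subgroups, an infinite family not obtained by reducing a fixed finite set of characteristic-zero varieties. ``Together with bounded-complexity equations for $V$'' does not produce such an $\calE$, and ``same type as a Zariski-dense characteristic-zero tuple'' has no content until types are defined by an $H$-free condition.

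The missing device is to use pairs of independent words:
$\mu^{\ell}(H)^2=\PP(w_1(\overline g),w_2(\overline g)\in H)\le\PP\big((w_1(\overline g),w_2(\overline g))\in V^{(2)}_{M_1,p}\big)$.
This holds because two elements of a proper subgroup do not generate $G^+$ (Lemma \ref{generating}). Here $V^{(2)}_{M_1}$ is a single $\ZZ$-scheme (Proposition \ref{degprop}); in characteristic zero it is the non-Zariski-dense locus, and for $p$ large its reduction mod $p$ is the non-$M$-dense locus for any fixed $M\ge M_1$ (Lemma \ref{andrewslem}). The characteristic-zero input then becomes a finite list of inclusions,
$\{\overline g:\overline w(\overline g)\in V^{(2)}_{M_1}\ \forall\,\overline w\in W'\}\subseteq V^{(2)}_{M_1}$,
one for each set $W'$ of pairs of words of length $\ell$ with $|W'|>Ce^{-\ell/C}|W_\ell|$. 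This list is exactly what Proposition \ref{prop:types} carries to characteristic $p$.
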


Here $\GG_p$ denotes the algebraic group $\GG_\ZZ \otimes \FF_p$.  Before we delve into the proof of this result, we derive some consequences.

\begin{cor}\label{expansion} Given $n,\delta>0$, there is $\eps=\eps(n,\delta)>0$ and a set of prime numbers $\mathcal{B}_\delta$ satisfying \eqref{eq:gaps-bad-primes-sparse} such that for every prime $p\notin \mathcal{B}_\delta$ and every semisimple algebraic group $\GG$ defined over $\FF_p$ and of (absolute) rank at most $n$, $\GG(\FF_p)$ is $\eps$-expanding.
\end{cor}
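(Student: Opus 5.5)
We deduce Corollary \ref{expansion} from Theorem \ref{expmain}, taking $\mathcal{B}_\delta$ to be the set given there for suitable parameters $d=d(n)$ and $r=r(n)$. The argument has four steps.

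\textbf{Step 1 (reduction to a simply connected split form).} A semisimple group $\GG$ over $\FF_p$ of rank at most $n$ is quasi-split, by Lang's theorem. Hence $\GG$ is a form of a split group $\GG^{\mathrm{split}}$ of the same root datum, and it splits over $\FF_{q}$ with $q=p^s$, where $s$ is bounded in terms of $n$. Indeed, the splitting field is determined by the Frobenius action on the Dynkin diagram and on the finite centre/fundamental group data, all of which have size bounded in $n$. Up to isomorphism there are finitely many root data of rank at most $n$. So there are finitely many Chevalley $\ZZ$-schemes $\GG_\ZZ$ to consider, and we may embed each in some $\GL_d$ with $d=d(n)$.

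Let $\pi:\tilde\GG\to\GG$ be the simply connected cover, defined over $\FF_p$. The image $\pi(\tilde\GG(\FF_p))$ is normal in $\GG(\FF_p)$ with abelian quotient of order bounded in $n$. This quotient is controlled by $H^1$ of the finite centre, of order at most $|Z|^{n}$ roughly.

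\textbf{Step 2 (passage to a bounded-index subgroup).} Expansion passes between a group and a subgroup of bounded index, with loss depending only on the index. Given generators $A$ of $G=\GG(\FF_p)$, a Schreier generating set of $H=\pi(\tilde\GG(\FF_p))$ consists of words of bounded length in $A$. Expansion of $\Cay(H)$, together with the fact that a bounded-size quotient $G/H$ is automatically $\eps_0$-expanding (a connected Cayley graph on $m$ vertices has gap $\gg 1/m^2$), yields expansion of $\Cay(G,A)$. One can argue this via the standard combination lemma for extensions, or combinatorially via \eqref{expand}.

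Quotients by central subgroups preserve expansion trivially. Likewise, direct products of boundedly many factors can be handled through the almost-simple factors: $\tilde\GG$ is a product of Weil restrictions of absolutely almost simple simply connected groups. We should not even need to decompose, however, since Theorem \ref{expmain} is stated for semisimple $\GG_\ZZ$.

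\textbf{Step 3 (density).} We view $\tilde\GG(\FF_p)$, or its image $H$, inside $\GG_p(\FF_q)$ for the split Chevalley scheme $\GG_\ZZ$ of the same type as $\GG$, via an isomorphism $\GG_{\FF_q}\cong \GG_p\otimes\FF_q$. We must check that $H$ is $M_0$-dense in $\GG_p$ for all large $p$. This follows from Lang--Weil: $|H|\gg p^{\dim\GG}$. On the other hand, a proper subvariety of degree at most $M_0$ contains at most $O_{M_0,d}(p^{\dim\GG-1})$ points over $\FF_p$. Here one uses that $H$ is the $\FF_p$-points of a connected group twisted by Frobenius, rather than the $\FF_q$-points, so the point counts are compared via the $\FF_p$-structure. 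The finitely many small primes, bounded in terms of $n$ and $M_0$, are added to $\mathcal{B}_\delta$; this does not affect \eqref{eq:gaps-bad-primes-sparse} for $X\gg_\delta 1$.

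\textbf{Step 4 (conclusion).} Theorem \ref{expmain} then gives $\eps$-expansion of $H$ for $p\notin\mathcal{B}_\delta$. Step 2 transfers this to $\GG(\FF_p)$.

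The main obstacle is Step 2's bounded-index transfer together with the density check in Step 3. Step 3 must handle twisted (non-split) forms carefully, since the relevant subgroup sits inside $\GG_p(\FF_q)$ rather than $\GG_p(\FF_p)$. For the transfer I would first try the standard Schreier-generator argument; the remaining steps are bookkeeping of bounded constants.
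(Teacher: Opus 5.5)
Your proposal is correct and matches the paper's proof in substance. The paper likewise notes that $\GG$ is an $\FF_p$-form of a Chevalley scheme $\GG_\ZZ$ of rank at most $n$ that splits over $\FF_q$ with $q\le p^{r(n)}$, checks $M_0$-density of $\GG(\FF_p)$ in $\GG_p$ via Lang--Weil and Schwartz--Zippel, and applies Theorem \ref{expmain}. Your Step 2 detour through $\pi(\tilde\GG(\FF_p))$ is harmless but unnecessary: Theorem \ref{expmain} already applies to any $M_0$-dense subgroup of $\GG_p(\FF_q)$, so you may take $G=\GG(\FF_p)$ directly, since the passage to a bounded-index subgroup is carried out inside its proof via Lemmas \ref{norilem} and \ref{quotoverspec}.
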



\begin{proof}


The group $\GG$ splits over $\FF_p^{\al}$, so is an $\FF_p$-form of a semisimple Chevalley $\ZZ$-scheme $\GG_\ZZ$ of rank $\leq n$ and $\GG(\FF_p)\leq \GG_p(\FF_p^{\al})$. There are only boundedly many (in terms of $n$) such forms, as they are in correspondence with $1$-cocycles from  $\Gal(\FF_p^{\al}/\FF_p)$ to the group of outer automorphisms of $\GG_\ZZ$ (see \cite[Theorem 2 and 3]{hertzig}). Consequently $\GG$ splits over $\FF_q$ for some $q \leq p^r$, $r=r(n)$ and so $\GG(\FF_p)\leq \GG_p(\FF_q)$.  We are thus in a position to apply Theorem \ref{expmain} and conclude. It only remains to check that
for a given $M>0$, $\GG(\FF_p)$ is $M$-dense in $\GG_p(\FF_p^{\al})$ as soon as $p$ is large enough in terms of $M$. This follows from the Lang--Weil estimate and the Schwartz-Zippel bound (see \cite[Section 6]{bggt}) since $\GG(\FF_p)$ has size $p^d +O_d(p^{d-1/2})$ where $d=\dim \GG$.
 \end{proof}

Similarly, we derive the following, which will be needed in our forthcoming paper  \cite{becker-breuillard-varju}.

\begin{cor}\label{nfields}Let $K$ be a number field and $\GG$ a semisimple algebraic $K$-group. Let $\delta>0$. There is $\eps>0$ and a subset $\mathcal{B}_\delta$ of rational primes satisfying \eqref{eq:gaps-bad-primes-sparse} such that if $\mathfrak{p}$ is a prime ideal of $\mathcal{O}_K$ with residue field of characteristic $p\notin \mathcal{B}_\delta$, then $\GG(\mathcal{O}_K/\mathfrak{p})$ is $\eps$-expanding. Moreover $\eps$ is bounded below in terms of $\delta$, $[K:\QQ]$ and $\dim \GG$ only.
\end{cor}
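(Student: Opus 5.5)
The plan is to reduce to Theorem \ref{expmain}, as in the proof of Corollary \ref{expansion}. First, since $\GG$ is a semisimple $K$-group, it is split over some finite extension $L/K$, and over $L$ it becomes isomorphic to (the base change of) a semisimple Chevalley $\ZZ$-scheme $\GG_\ZZ$ of the same type. I fix once and for all an embedding $\GG_\ZZ\hookrightarrow \GL_d$, with $d$ bounded in terms of $\dim\GG$. I also fix an $\calO_K$-model of $\GG$, say a closed subgroup scheme of $\GL_{N}$ over $\calO_K[1/D]$ for some integer $D$. Then for all primes $\frakp$ not dividing $D$, the reduction $\GG_\frakp$ is a semisimple group over $\calO_K/\frakp$ of the same absolute type. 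Only finitely many $\frakp$ divide $D$, and I put their residue characteristics into $\calB_\delta$; this does not affect \eqref{eq:gaps-bad-primes-sparse} for $X\gg_\delta 1$.

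For $\frakp\nmid D$ with residue field $\FF_{q_0}$, where $q_0=p^{f}$ and $f\le [K:\QQ]$, the group $\GG_\frakp$ is an $\FF_{q_0}$-form of $\GG_p=\GG_\ZZ\otimes\FF_p$. As in Corollary \ref{expansion}, such forms are classified by $1$-cocycles of $\Gal(\FF_{q_0}^{\al}/\FF_{q_0})$ with values in the (bounded) outer automorphism group. Hence $\GG_\frakp$ splits over $\FF_q$ with $q=q_0^{r'}=p^{s}$, where $s\le r:=[K:\QQ]\cdot r'(\dim\GG)$. Moreover, after conjugating by an element of $\GG_p(\FF_p^{\al})$, the group $\GG(\calO_K/\frakp)=\GG_\frakp(\FF_{q_0})$ embeds into $\GG_p(\FF_q)$. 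Conjugation does not affect expansion.

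It remains to check $M_0$-density with $M_0=M_0(d,r)$ from Theorem \ref{expmain}. By Lang--Weil, $|\GG_\frakp(\FF_{q_0})|=q_0^{\dim\GG}(1+O(q_0^{-1/2}))$. By the Schwartz--Zippel bound, a proper subvariety of degree at most $M_0$ contains at most $O_{M_0,d}(q_0^{\dim\GG-1})$ points over $\FF_{q_0}$. So density holds once $p$ exceeds a constant $C(M_0,d)$. I add the finitely many primes below $C$ to $\calB_\delta$. I then take $\calB_\delta$ to be the union of the set from Theorem \ref{expmain} (applied with parameters $d,r,\delta$) and these finitely many primes, and set $\eps=\eps(d,r,\delta)$, which depends only on $\delta$, $[K:\QQ]$ and $\dim\GG$.

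The main subtlety is uniformity: the exceptional set must be a set of rational primes, independent of $\frakp$ above $p$, and $\eps$ must not depend on $K$ or $\GG$ beyond the stated quantities. Theorem \ref{expmain} handles both issues, since its exceptional set depends only on $(d,r,\delta)$ and covers every Chevalley scheme in $\GL_d$ and every $M_0$-dense subgroup of $\GG_p(\FF_q)$. The only $K$- and $\GG$-dependent contributions are finitely many primes, which are absorbed by the condition $X\gg_\delta 1$. I must make sure that finiteness is genuinely all that is needed, and that the density threshold $C(M_0,d)$ is uniform in $\frakp$. Both follow because Lang--Weil and Schwartz--Zippel constants depend only on degrees and dimensions, which are fixed by the model.
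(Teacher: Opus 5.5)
Your proposal is correct, but it takes a different route from the paper.

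\textbf{The paper's route.} The paper passes to the Weil restriction $\GG'=\Res_{K/\QQ}\GG$. For $p$ large it uses the identification $\GG'_p(\FF_p)\simeq\prod_{\frakp\mid p}\GG_\frakp(\calO_K/\frakp)$. It then applies Corollary \ref{expansion} to this semisimple $\FF_p$-group, whose dimension is bounded by $[K:\QQ]\dim\GG$, and descends to each factor $\GG(\calO_K/\frakp)$ via the quotient part of Lemma \ref{quotoverspec}.

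\textbf{Your route.} You feed $\GG_\frakp(\FF_{q_0})$ directly into Theorem \ref{expmain}. The residue degree $f\le[K:\QQ]$ is absorbed into the exponent $s\le r$ that the theorem already permits. In effect you rerun the proof of Corollary \ref{expansion} over $\FF_{q_0}$ instead of $\FF_p$.

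\textbf{Comparison.} Your route avoids both the Weil restriction and the quotient lemma. The paper's route treats all $\frakp\mid p$ at once; it actually shows that the product $\prod_{\frakp\mid p}\GG(\calO_K/\frakp)$ is expanding. It also uses Corollary \ref{expansion} as a black box, so the splitting and density checks only ever happen over prime fields.

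\textbf{Density step.} One point in your density check should be stated more carefully.
\begin{itemize}
\item Under the $\FF_q$-isomorphism $\phi\colon\GG_\frakp\otimes\FF_q\to\GG_p\otimes\FF_q$, the points of $\GG_\frakp(\FF_{q_0})$ become fixed points of a twisted Frobenius. In general they are not $\FF_{q_0}$-rational points of $\GL_d$; think of $\SU_n(q_0)\subset\SL_n(q_0^2)$.
\item So the Schwartz--Zippel count must be done on $\GG_\frakp$, after pulling back the degree-$M_0$ subvariety along $\phi$. The bound $|Z\cap\FF_{q_0}^N|\le\deg Z\cdot q_0^{\dim Z}$ holds for any $Z$ defined over $\FF_p^{\al}$, so this step is fine.
\item This requires $\deg\phi$ to be bounded independently of $\frakp$, and the model does not give that automatically. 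You can get it by taking $\phi$ to be the reduction of one fixed isomorphism defined over the ring of integers of a splitting field of $\GG$; that field has degree over $K$ bounded in terms of $\dim\GG$. Any two such isomorphisms differ by a conjugation, which is linear, composed with one of boundedly many diagram automorphisms.
\item Alternatively, quote the standard fact that finite groups of Lie type $\GG^F$ over $\FF_{q_0}$ are $M$-dense once $q_0\gg_M 1$.
\end{itemize}
With that adjustment your argument is at the same level of rigour as the paper's proof of Corollary \ref{expansion}.
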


\begin{proof}Let $\GG'$ be the restriction of scalars of $\GG$ from $K$ to $\QQ$ (see \cite[2.1.2]{platonov-rapinchuk}). Then when $p$ is large enough, $\GG'_p(\FF_p) \simeq \prod_{\mathfrak{p}|p} \GG_{\mathfrak{p}}(\mathcal{O}_K/\mathfrak{p})$. So the $\GG_\mathfrak{p}(\mathcal{O}_K/\mathfrak{p})$ appear as quotients of $\GG'_p(\FF_p)$, so it is enough to prove the expander property for $\GG'_p(\FF_p)$ (see Lemma \ref{quotoverspec}). This is the group of $\FF_p$-points of a semisimple algebraic group defined over $\FF_p$ and of dimension bounded in terms of $\dim \GG$ and $[K:\QQ]$ only. The result then follows from Corollary \ref{expansion}.
\end{proof}

\subsection{Non-degenerate locus and generating tuples}

\begin{prop}\label{zdense} Let $\GG$ be a connected semisimple algebraic group over a field $K$ of characteristic zero and $m\ge 2$. For an $m$-tuple in $\GG^m$, generating a Zariski-dense subgroup is a Zariski-open condition (defined over $K$). Moreover, there is $M_1=M_1(\dim \GG) \ge 1$ such that every $M_1$-dense subgroup is Zariski-dense. 
\end{prop}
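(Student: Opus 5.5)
The plan has two parts. Openness is obtained by reducing to a finite list of ``maximal proper closed subgroup types'', and $M_1$ comes from a uniform degree bound on those subgroups via Jordan's theorem.

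\emph{Openness.} We may work over $K^{\al}$, since the locus we produce will be Galois-stable and hence defined over $K$. Let $\Gamma=\langle g_1,\dots,g_m\rangle$ and let $H$ be its Zariski closure. Then $H\neq\GG$ exactly when one of two things happens:
\begin{enumerate}
\item[(a)] $H^0$ is a proper closed connected subgroup normalized by $\Gamma$, and $H^0\neq\GG$;
\item[(b)] $H^0=\GG$ is impossible only if $H$ is finite or $H^0$ is proper.
\end{enumerate}
I would first use the standard fact that the closure of $\Gamma$ equals the closure of the subgroup generated by the words of length at most $N$, for some $N$ bounded in terms of $\dim\GG$ (chains of closed subgroups have bounded length, and each $H_\ell=\overline{\langle B_\ell\rangle}$ stabilizes).

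The cleaner route is via Lie algebras, since we are in characteristic zero. $\Gamma$ is Zariski-dense if and only if $H^0=\GG$. This holds if and only if:
\begin{enumerate}
\item[(i)] no proper nonzero subspace of $\mathfrak g$ is a Lie subalgebra invariant under $\mathrm{Ad}(g_i)$ for all $i$ and in addition algebraic;
\item[(ii)] $H$ is infinite.
\end{enumerate}
Since $\GG$ is semisimple, a connected normal subgroup of $H$ that is $\Gamma$-invariant is controlled by the invariant subspaces. I would parametrize by $k$-dimensional subspaces $W\in\mathrm{Gr}(k,\mathfrak g)$ with $[W,W]\subseteq W$. These form a projective (closed) variety $\mathcal L_k$. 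The condition ``$\exists W\in\mathcal L_k$, $0<k<\dim\mathfrak g$, with $\mathrm{Ad}(g_i)W=W$ for all $i$'' is the projection of a closed subset of $\GG^m\times\mathcal L_k$. It is therefore closed, by properness of the Grassmannian. The Lie algebra of $H^0$ is such a $W$ whenever $H^0$ is proper and nontrivial, so its complement is open.

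Case (ii) remains: $H$ finite, or $H^0$ trivial. By Jordan's theorem a finite subgroup of $\GL_d$ has an abelian normal subgroup of index at most $J(d)$. Then the words $w$ of length at most $N'(d)$ contain some nontrivial element $\gamma$ for which $\gamma^{J(d)!}$ commutes with all the $g_i^{J(d)!}$. I would instead phrase the bad locus as follows: the closure of the $g_i^{J!}$ is abelian, i.e.\ $[g_i^{J!},g_j^{J!}]=1$. This is a closed condition.

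\emph{Main obstacle.} The delicate point is that abelianness of $\langle g_i^{J!}\rangle$ alone is not equivalent to finiteness. So I would take the bad set to be the union of the following closed sets:
\begin{enumerate}
\item[(1)] the invariant-subalgebra locus above;
\item[(2)] the locus where $\langle g_1^{J!},\dots,g_m^{J!}\rangle$ is abelian.
\end{enumerate}
These are exactly the non-dense tuples. If $H$ is finite, (2) holds. If $H^0$ is proper and nontrivial, (1) holds. Conversely, if (2) holds then $H$ is virtually abelian, hence not $\GG$, since $\GG$ is nonabelian semisimple. If (1) holds, then $\mathrm{Ad}(H)$ preserves $W$ while $\mathrm{Ad}(\GG)$ does not, because $\mathfrak g$ has no proper nonzero $\GG$-invariant subalgebra that is invariant. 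The difficulty is that $\mathfrak g$ may have ideals when $\GG$ is not simple, so $W$ could be an ideal. I would handle this by restricting to subalgebras that are not ideals, which is again a closed condition on $W$ (non-ideal is open, though). Alternatively I would treat $\GG$ simple factor by factor, plus a Goursat-type argument to exclude diagonal subgroups, using the fact that $\GG$ has only finitely many factors.

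\emph{$M_1$.} Let $\Gamma$ be $M_1$-dense, so $H=\overline\Gamma$ cannot lie in a proper subvariety of degree at most $M_1$. I need a uniform bound on $\deg H$ for proper closed subgroups $H$, or at least that each such $H$ lies in a proper closed subvariety of bounded degree. If $H^0$ is nontrivial, $H$ normalizes $\mathrm{Lie}(H^0)$, so $H$ lies in the normalizer of a subspace. This is the stabilizer of a point in a Grassmannian representation, which is cut out by equations of degree bounded by $\dim\GG$. It is proper, since otherwise the subspace would be an ideal; in that case I would use the factor-by-factor argument. If $H$ is finite, Jordan gives an abelian subgroup $A$ of index at most $J$. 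Then $H$ lies in the set of $g$ with $g^{J!}$ in the centralizer of $A^{J!}$, which is a bounded-degree proper subvariety. Alternatively, $H$ lies in $\{g: [g^{J!},h^{J!}]=1 \ \forall h\in H\}$, which is defined by commutator equations of degree $O_d(1)$. In each case the degree depends only on $\dim\GG$ and $d$, giving $M_1$.
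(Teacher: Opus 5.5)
Your openness argument has a genuine gap in locus (2). The condition $[g_i^{J!},g_j^{J!}]=1$, imposed only on the generators, is closed and does contain every tuple generating a finite group. But the converse you assert, that (2) forces $\overline{\Gamma}$ to be virtually abelian, is false: any tuple of elements whose orders divide $J!$ satisfies (2) trivially. For instance, in $\SL_2$ the matrices $S=\bigl(\begin{smallmatrix}0&-1\\1&0\end{smallmatrix}\bigr)$ and $ST=\bigl(\begin{smallmatrix}0&-1\\1&1\end{smallmatrix}\bigr)$ have orders $4$ and $6$, so $S^{J!}=(ST)^{J!}=1$. Yet $\langle S,ST\rangle=\SL_2(\ZZ)$ is Zariski-dense. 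So (1)$\cup$(2) strictly contains the non-dense locus, and the openness of its complement tells you nothing about the dense locus.

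The finiteness test has to be applied to all of $\Gamma$, not just to the generators. One option is to impose $[w(\bar g)^{J!},w'(\bar g)^{J!}]=1$ for all words $w,w'$. This is an intersection of closed sets; it holds when $\Gamma$ is finite, by Jordan; and it fails when $\Gamma$ is dense, because the $J!$-power map of $\GG$ is dominant and $\GG$ is non-abelian. The other option, which is the argument the paper cites, is reducibility on an irreducible $\GG$-module of dimension $>J(d)$. By Clifford theory, a finite group with an abelian normal subgroup of index $\le J$ has all irreducible representations of dimension $\le J$. The paper's proof is exactly this: there are finitely many irreducible representations $\rho_i$ of $\GG$ such that $\Gamma$ is dense if and only if it acts irreducibly in each $\rho_i$, and irreducibility is an open condition.

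The same defect affects your $M_1$ argument for finite $H$. The sets $\{g: g^{J!}\in C_\GG(A^{J!})\}$ and $\{g: [g^{J!},h^{J!}]=1\ \forall h\in H\}$ need not be proper. For the binary icosahedral group in $\SL_2$ one has $A\subseteq\{\pm 1\}$ and $h^{J!}=1$ for all $h$, so both sets are all of $\SL_2$; a separate case such as $\{g: g^{N}\in Z(\GG)\}$ is needed. The representation criterion handles this at once: a non-dense subgroup stabilizes a proper nonzero subspace of some $\rho_i$. It therefore lies in a proper closed subgroup whose defining equations have degree bounded in terms of $\rho_i$, which yields $M_1$; the paper simply cites Lemma 7.4 of Becker--Breuillard for this.

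Finally, for non-simple $\GG$ your locus (1) is all of $\GG^m$, since the simple ideals $\mathfrak g_i$ are invariant. As you noticed, restricting to non-ideal $W$ is an open condition, so the properness argument breaks. The factor-by-factor/Goursat step you mention is genuinely needed and is not carried out. For example, you could require density of each projection together with the absence of a $\Gamma$-stable line in $\mathfrak g_i\otimes\mathfrak g_j$ for $i\neq j$.
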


\begin{proof} This is well-known. See for instance  \cite[Prop. 8.2]{acampo-burger} and \cite{guralnick}. More precisely, there are two irreducible $K$-representations $\rho_1$ and $\rho_2$ of $\GG$ such that for any tuple $(g_1,\ldots,g_m) \in \GG$, the subgroup $\Gamma$ it generates is Zariski-dense if and only if $\Gamma$ acts irreducibly via $\rho_1$ and $\rho_2$. Since acting irreducibly is a Zariski-open condition, the conclusion follows. The last assertion is proven in \cite[Lemma 7.4]{becker-breuillard}.
\end{proof}

In positive characteristic, the (finite) subfield subgroups are not Zariski-dense, but they can be $M$-dense for large $M$. Nevertheless, the condition of generating a subgroup that is not $M$-dense (for a given $M$) is a Zariski-closed condition. In fact:

\begin{prop}\label{degprop}Let $\GG$ be a semisimple Chevalley group scheme over $\ZZ$ and $M \ge 1$. Given $m\geq2$, there is a proper closed $\ZZ$-subscheme $V_{M}^{\left(m\right)}$ of $\GG_{\ZZ}^{m}$, which over $k=\QQ^{\al}$ (resp.  $k=\FF_p^{\al}$) is the algebraic set of $m$-tuples in $\GG^m$ (resp. $\GG_p^m$) that generate a non $M$-dense subgroup.
\end{prop}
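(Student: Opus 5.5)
Here is how I would construct $V_M^{(m)}$. A subgroup $\Gamma$ is non $M$-dense exactly when it lies in some proper closed subvariety $Z\subsetneq\GG$ of degree at most $M$. We may then replace $Z$ by the intersection $Z_\Gamma$ of all such varieties containing $\Gamma$. By the Eskin--Mozes--Oh / Breuillard--Green--Tao "escape from subvarieties" philosophy, this intersection is controlled: its closure $\overline{\Gamma}^{Z}$ is an algebraic subgroup $H$ whose complexity is bounded in terms of $M$ and $d$. More precisely, the plan is to show there is $M'=M'(M,d)$ such that $\Gamma$ is non $M$-dense if and only if $\Gamma$ fixes a point in some proper subspace-type configuration of bounded complexity. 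Concretely, I would use the standard fact (cf.\ \cite[Section 5]{bggt}, \cite{breuillard-green-tao-linear}) that if $\Gamma$ lies in a proper subvariety of degree $\le M$, then $\Gamma$ is contained in a proper algebraic subgroup of degree $\le M'$. Proper algebraic subgroups of bounded degree are in turn detected linearly: $H$ is the stabilizer of a line in some representation $V_N=\bigoplus_{j\le N}\mathrm{Sym}^j(k^d\oplus (k^d)^*)$, with $N=N(M,d)$ (Chevalley's theorem, with effective degree bounds).

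With this, the condition becomes: there exists a nonzero vector $v$, or a point of a projective space $\PP(V_N)$, and a proper-subgroup datum such that each $g_i$ stabilizes the line $[v]$. Conversely, any tuple stabilizing such a line generates a group inside a bounded-degree proper subgroup, hence is non $M''$-dense. To get exactly "non $M$-dense" rather than a sandwich, I would instead phrase it directly as follows. Consider the Hilbert-scheme-like parameter space $\mathcal{H}_M$ of closed subschemes of $\GG\subset\AA^{d^2}$ of degree $\le M$ cut out by equations of degree $\le M$; by Chow-variety considerations it is a finite union of projective $\ZZ$-schemes (Grassmannians of polynomial spaces of degree $\le M$). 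Inside $\mathcal{H}_M\times\GG^m$, take the incidence $\{(Z,g): w(g)\in Z \text{ for all words } w\}$. A key reduction is that it suffices to impose this for words of length $\le L(M,d)$, since the chain $Z\cap g_iZ\cap\cdots$ stabilizes after boundedly many steps by the degree bound (Lemma \ref{lem:component-of-few-eqs-1}-type counting). Alternatively, replace $Z$ by $\bigcap_{\gamma}\gamma Z$, which is $\Gamma$-invariant, proper, of bounded degree, and whose defining condition is "$g_iZ=Z$", which is closed. Then the condition "$Z$ proper, $\Gamma$-invariant, containing $1$" is closed in $\mathcal{H}_{M'}\times\GG^m$. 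Projecting along the proper map $\mathcal{H}_{M'}\to\Spec\ZZ$ gives a closed $\ZZ$-subscheme of $\GG^m$ by properness (the main theorem of elimination theory).

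The main obstacle is matching the fibers precisely with "non $M$-dense" over every $\FF_p^{\al}$ and over $\QQ^{\al}$ with the same $M$, and making sure the scheme is proper in every characteristic. My approach is to define $V_M^{(m)}$ as the image of the incidence $\{(Z,g): Z\in\mathcal{H}_M,\ Z\ne\GG,\ \langle g\rangle\subset Z\}$, imposing only words of length $\le L$ with $L$ chosen uniformly via the stabilization argument above. I then need to verify that this image is closed, which requires that the "$Z\ne\GG$" locus is closed in $\mathcal{H}_M$; this holds because it is a union of components. Finally, properness over $\QQ$ follows from Proposition \ref{zdense}: a Zariski-dense generic tuple exists, so $V_M^{(m)}\ne\GG^m$.
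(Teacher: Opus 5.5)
\textbf{Gap: your parameter space does not make ``proper'' a closed condition.} Your overall plan (a projective parameter space, a closed incidence in $\mathcal{H}_M\times\GG^m$, and a proper projection) can be made to work. But the one step carrying the whole content of the statement rests on a claim that is false for the parametrization you wrote down. You take $\mathcal{H}_M$ to be Grassmannians of subspaces $U$ of the space $P_{\le M}$ of polynomials of degree $\le M$ on $\AA^{d^2}$. You then assert that the locus where $Z=V(U)\cap\GG\neq\GG$ is closed because it is a union of components.

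It is not. We have $V(U)\cap\GG=\GG$ exactly when $U\subseteq I(\GG)_{\le M}$. For $1\le j\le \dim I(\GG)_{\le M}$ this is a nonempty proper closed sub-Grassmannian of the irreducible $\mathrm{Gr}(j,P_{\le M})$; for $j=1$ it is the linear subspace $\PP(I(\GG)_{\le M})\subset\PP(P_{\le M})$. So the ``proper'' locus is open and dense, not closed. Properness of $\mathcal{H}_M$ over $\Spec\ZZ$ only says that images of \emph{closed} sets are closed, so it tells you nothing about the image of the incidence restricted to an open piece. Concretely, a limit of pairs $(U_t,g_t)$ with $U_t\not\subseteq I(\GG)$ may have $U_0\subseteq I(\GG)$, and then nothing in your argument shows that $g_0$ is still non $M$-dense. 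The same objection applies to your variant with $\Gamma$-invariant $Z\in\mathcal{H}_{M'}$.

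\textbf{The fix, and how it relates to the paper.} The missing idea is to work modulo $I(\GG)$. Parametrize by lines (one polynomial suffices) or nonzero subspaces of $k[\GG]_{\le M}=P_{\le M}/I(\GG)_{\le M}$, so that every point of the parameter space defines a proper subvariety of $\GG$. Equivalently, restrict to $U\supseteq I(\GG)_{\le M}$, where $\dim U>\dim I(\GG)_{\le M}$ genuinely is a union of components. (Chow varieties of cycles of dimension $<\dim\GG$ would also avoid the problem, but that is not what you set up.) Then $\{([f],g): f(w(g))=0 \text{ for all words } w\}\subset\PP(k[\GG]_{\le M})\times\GG^m$ is closed, and so is its projection.

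Once repaired, your argument is the paper's proof in abstract form. The paper first cuts down to words of length $\le C(n,M)$ by escape from subvarieties. It then notes that a nonzero $f\in k[\GG]_{\le M}$ vanishing at these finitely many points exists iff the evaluation functionals fail to span $(k[\GG]_{\le M})^*$, iff all maximal minors of the evaluation matrix vanish. That is exactly the elimination of $[f]$ from your incidence, and it is what the paper's clause ``without vanishing identically'' encodes. The explicit version has an advantage: it produces integer polynomials of degree $M^{O(1)}$, which is what gives the bound $\log p_M\ll_n M^{O_{n,m}(1)}$ in Lemma \ref{andrewslem}. The abstract properness route gives no such control.

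\textbf{Smaller points.} First, restricting to words of length $\le L$ is not needed for closedness, since an arbitrary intersection of closed conditions is closed. Second, where such a bound is needed, the right justification is not Lemma \ref{lem:component-of-few-eqs-1}. It is the fact that the subspaces of $k[\GG]_{\le M}$ vanishing on words of length $\le\ell$ form a decreasing chain. This chain stabilizes as soon as two consecutive terms agree, hence after at most $\dim k[\GG]_{\le M}$ steps. Finally, your opening detour through algebraic subgroups and Chevalley's theorem can be dropped.
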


\begin{proof} We view $\GG \leq \GL_{n-1}$ as a subscheme of the affine space of square matrices $M_{n}$. By escape from subvarieties (see \cite[Lemma 3.1]{becker-breuillard}) an $m$-tuple $(g_1,\ldots,g_m)$ does not generate an $M$-dense subgroup if and only if the words of length at most $C=C(n,M)$ in $(g_1,\ldots,g_m)$ all lie in some proper closed subvariety (say a hypersurface) of degree at most $M$. This condition can be expressed in terms of the polynomials defining the hypersurfaces, looking at their coefficients. Polynomials of degree at most $M$ in the matrix coefficients form a linear space of dimension $M^{O_{m,n}(1)}$. To say that one of them vanishes on these words without vanishing identically on $\GG^m$ exactly means that the linear forms on the space of polynomials obtained by evaluating at each of these words do not span the space of all such linear forms. This can be expressed by the vanishing of the minors of maximal dimension. Hence by the vanishing of polynomials (with integer coefficients) of degree at most $M^{O_{m,n}(1)}$ in the matrix entries of $(g_1,\ldots,g_m)$. These define the subscheme $V_{M}^{\left(m\right)}$.
\end{proof}

We  denote by $V_{M,p}^{(m)}$ the variety over $\FF_{p}^{\al}$ resulting from $V_{M}^{\left(m\right)}$ by base change. We show:

\begin{lem}\label{andrewslem} Given $M\ge M_1$, there is $p_M$ depending only on $M, m,\dim \GG$ such that if $p>p_M$ then $V^{(m)}_{M,p}=V^{(m)}_{M_1, p}$.
\end{lem}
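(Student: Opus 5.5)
The inclusion $V^{(m)}_{M_1,p}\subseteq V^{(m)}_{M,p}$ is automatic for $M\ge M_1$: a subgroup that is not $M_1$-dense lies in a proper subvariety of degree at most $M_1\le M$, so it is not $M$-dense either. The content is the reverse inclusion. My plan is to get it first in characteristic zero and then transfer it to large $p$ by a reduction argument, using only finitely many polynomials with integer coefficients.

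In characteristic zero, over $k=\QQ^{\al}$, I will show that $V^{(m)}_M(\QQ^{\al})=V^{(m)}_{M_1}(\QQ^{\al})$. Suppose an $m$-tuple generates a subgroup $\Gamma$ that is not $M$-dense. Then $\Gamma$ is not Zariski-dense, since a Zariski-dense subgroup is not contained in any proper subvariety. By the last assertion of Proposition \ref{zdense}, $\Gamma$ is then not $M_1$-dense. So the tuple lies in $V^{(m)}_{M_1}$, and the two algebraic sets coincide over $\QQ^{\al}$.

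To transfer this to characteristic $p$, I use the proof of Proposition \ref{degprop}. It exhibits $V^{(m)}_M$ and $V^{(m)}_{M_1}$ as zero sets, inside $\GG^m_\ZZ$, of explicit finite families of integer polynomials: maximal minors of evaluation matrices of words of length at most $C(n,M)$. Their number, degrees and heights are bounded in terms of $M$, $m$ and $\dim\GG$ only. Moreover, the same polynomials reduced mod $p$ cut out $V^{(m)}_{M,p}$ and $V^{(m)}_{M_1,p}$. This is the key point, and it requires that escape from subvarieties, \cite[Lemma 3.1]{becker-breuillard}, holds with the same constant $C(n,M)$ in every characteristic, which it does.

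Let $E^1$ be the union of the defining polynomials of $\GG^m$ and of $V^{(m)}_M$, and let $E^2$ be defined likewise using $V^{(m)}_{M_1}$. I then apply Proposition \ref{prop:types} with $F=\QQ$ and $\calE=E^1\cup E^2$. It yields $\Delta$, depending only on $M,m,\dim\GG$, such that for $p\nmid\Delta$ the inclusion $V(E^1)\subseteq V(E^2)$ over $\QQ^{\al}$ is equivalent to $V(E^1_p)\subseteq V(E^2_p)$ over $\FF_p^{\al}$. One could equally apply Proposition \ref{prop:inclusion-preserved-mod-p} directly to the two ideals. Setting $p_M=\Delta$ then gives $V^{(m)}_{M,p}\subseteq V^{(m)}_{M_1,p}$ for $p>p_M$, and combined with the automatic inclusion this gives equality.

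The step I expect to need the most care is the claim that the equations from Proposition \ref{degprop} reduce to the correct mod $p$ objects. One must check that the rank condition on evaluation matrices, taken mod $p$, still encodes "some nonzero polynomial of degree $\le M$ vanishes on the words but not on $\GG_p^m$". This amounts to checking that the dimension of the space of degree-$\le M$ polynomial functions on $\GG_p^m$ is the same as in characteristic zero for large $p$. That holds by flatness of the Chevalley scheme, after possibly enlarging $p_M$.
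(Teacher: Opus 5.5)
Your proposal is correct and follows essentially the paper's argument. In characteristic zero, Proposition \ref{zdense} identifies $V^{(m)}_M$ and $V^{(m)}_{M_1}$, since both are the locus of tuples that do not generate a Zariski-dense subgroup; Proposition \ref{prop:inclusion-preserved-mod-p} (or Proposition \ref{prop:types}, as you note) then transfers this equality to every $p$ beyond an explicit bound depending only on $M,m,\dim\GG$. Your closing concern about whether the defining equations reduce correctly is already settled by the statement of Proposition \ref{degprop}, which says that the base change of $V^{(m)}_M$ to $\FF_p^{\al}$ is the non-$M$-dense locus for every $p$.
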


\begin{proof}This is \cite[Lemma 2.7]{breuillard-standrews}. The proof given there relied on ultraproducts. Instead, one can argue directly using Proposition \ref{prop:inclusion-preserved-mod-p}. 
Since $M\ge M_1$, Proposition \ref{zdense} implies that $V_{M}^{\left(m\right)}$ is the locus of $m$-tuples that do not generate a Zariski-dense subgroup of $\GG$. By Proposition \ref{prop:inclusion-preserved-mod-p},  we get that $V_{M,p}^{\left(m\right)}$
and $V_{M_1,p}^{\left(m\right)}$ coincide if $p\gg_{M,n} 1$. (This approach has the merit of giving an explicit bound on $p_M$; tracing the height of the polynomials that define $V_{M}^{(m)}$ one sees that $\log p_M \ll_n M^{O_{n,m}(1)}$.)
\end{proof}

\subsection{Subgroup structure}
We record here a preliminary analysis of the subgroups of $G$, in the form of the following lemma, which exploits Nori's theorems from \cite{nori}.

\begin{lem}\label{norilem}
Let $G$ be a subgroup of $\GG_{p}\left(\FF_{q}\right)$, where $\GG_{\ZZ}$
is a semisimple Chevalley $\ZZ$-scheme of $\GL_{d}$, $q=p^{r}$,
$p$ a prime number. Let $G^{+}$ be the subgroup of $G$ generated
by elements of order $p$. Then there is a connected semisimple algebraic group $\widetilde{G}$ defined over $\FF_p$ such that $G^+= \widetilde{G}(\FF_p)^+$. Moreover, $[G:G^+]$ is bounded in terms of $d$ and $r$ only. Furthermore, there is $M_0=M_0(d,r)\ge 1$, such that if $G$ is $M$-dense in $\GG_p$ and $M\ge M_0$,  $\widetilde{G}$ is isomorphic over $\FF_p^{\al}$ to a cartesian power of at most $r$ copies of $\GG$,  and if $p\gg_{M,r,d} 1$, $G^+$ is the only subgroup of $G^+$ that is $M$-dense in $\widetilde{G}$.
\end{lem}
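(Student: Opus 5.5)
The plan is to reduce to Nori's theorem by viewing $G$ inside a group of $\FF_p$-points via restriction of scalars. Put $\GG' := \Res_{\FF_q/\FF_p}\GG_p$. This is a connected semisimple $\FF_p$-group of dimension $r\dim\GG$. It embeds in $\GL_{dr}$, and over $\FF_p^{\al}$ it is isomorphic to $\GG_p^r$, with the factors indexed by the $r$ embeddings $\FF_q\to\FF_p^{\al}$. We have $G\le \GG_p(\FF_q)=\GG'(\FF_p)\le \GL_{dr}(\FF_p)$.

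\textbf{Structure of $G^+$ and the index bound.} Nori's theorem \cite{nori} applies to subgroups of $\GL_N(\FF_p)$ with $N=dr$ once $p\gg_N 1$. It produces a connected unipotently generated $\FF_p$-subgroup $\widetilde{G}\le\GL_N$, obtained from the exponentials of the nilpotent logarithms of the $p$-elements of $G$, such that $G^+=\widetilde{G}(\FF_p)^+$.

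We also need $\widetilde G$ to be semisimple. Since $G^+$ is normal in $G$, the unipotent radical of $\widetilde G$ yields a normal $p$-subgroup $O_p(G)$. I would argue this is trivial: $G\le\GG'(\FF_p)$, and a normal unipotent subgroup of $\widetilde G\le \GG'$ generated by a finite group of this kind lies in a proper parabolic. This is where some care is needed. One could first pass to $G/O_p(G)$, or note that the lemma only needs $\widetilde G$ modulo radical, replacing $\widetilde G$ by $\widetilde G/R_u$. Alternatively one works with the reductive quotient throughout.

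For small $p$ (below Nori's threshold), $|G|$ is bounded in terms of $d,r$, so everything holds trivially with $\widetilde G$ trivial, adjusting constants.

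For the bound on $[G:G^+]$, Nori also shows that $G/G^+$ has a normal abelian subgroup of prime-to-$p$ order with index bounded by $c(N)$ (Jordan-type). I expect the main obstacle to be bounding this abelian part. When $G$ is not dense, $G$ could be a torus-like group of prime-to-$p$ order with $G^+=1$, so $[G:G^+]$ is not bounded in general. So I would read the index assertion as being under the density hypothesis, or else as bounding $G/G^+$ modulo an abelian normal subgroup.

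Under $M$-density, the argument goes as follows. The abelian part $A$ normalizes $\widetilde G(\FF_p)^+$, which will be a product of subfield subgroups, so $A$ embeds in the outer-automorphism-plus-center group of $\widetilde G(\FF_p)$. The center and diagonal automorphisms have order bounded by $O_d(1)^r$, and field and graph automorphisms are bounded by $r\cdot|\Out(\text{Dynkin})|$. This gives $[G:G^+]\ll_{d,r}1$.

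\textbf{Identification of $\widetilde G$ under density.} Suppose $G$ is $M$-dense in $\GG_p$ with $M\ge M_0$. By Larsen--Pink \cite{larsen-pink}, $G$ is then a subfield subgroup: it contains $[H,H]$ for $H=\GG_p(\FF_p^{\al})^F$, with $|H|\approx q_0^{\dim\GG}$ and $q_0\le q$. Hence $G^+\supseteq[H,H]$ and $p\mid |G|$, so $G^+$ is nontrivial. Comparing orders, $\dim\widetilde G=\dim\GG\cdot\log_p q_0$.

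Over $\FF_p^{\al}$, $\widetilde G$ sits inside $\GG_p^r$. Its projection to each factor contains $G^+$, which is $M$-dense, hence the projection is all of $\GG_p$. By Goursat's lemma for the simple factors, $\widetilde G\simeq_{\FF_p^{\al}}\GG^{j}$ with $j=\log_p q_0\le r$. The factors are twisted diagonals by Frobenius powers.

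\textbf{Uniqueness.} Let $H'\le G^+$ be $M$-dense in $\widetilde G$ with $p\gg_{M,r,d}1$. First, $H'$ cannot avoid $p$-elements: otherwise Jordan gives an abelian subgroup of bounded index, which for $M$ large would contradict density. So $H'^+$ is nontrivial, and by Nori $H'^+=\widetilde H(\FF_p)^+$ with $\widetilde H\le\widetilde G$ connected.

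Density then forces $\widetilde H=\widetilde G$. Indeed, a proper connected subgroup has degree $O_{d,r}(1)$, and $H'^+$ has bounded index in $H'$, so a bounded union of its cosets is a proper subvariety of bounded degree containing $H'$. This uses $M\ge M_0$ large, and the Lang--Weil/Schwartz--Zippel estimates to rule out finite-point coincidences when $p$ is large. Hence $H'\supseteq H'^+=\widetilde G(\FF_p)^+=G^+$.
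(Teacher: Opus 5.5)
Your argument is essentially correct under the density hypothesis, but it takes a different route from the paper. As you rightly note, the index bound and semisimplicity genuinely require density, and the paper's proof also invokes density ``by assumption'' at that point.

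The paper uses only Nori's Theorems B and C. Let $H$ be the abelian $p'$-subgroup with $HG^+$ normal of bounded index in $G$. Then $HG^+/G^+$ is abelian, so $[HG^+,HG^+]\le G^+$. Hence $G^+$, and therefore $\widetilde G$, inherits dense projections to every factor of $\GG_p^r$ directly from $G$. The index bound is then proved inside algebraic groups:
\begin{enumerate}
\item $H$ normalizes $\widetilde G$ (Lang--Weil);
\item $\Out(\widetilde G)$ is bounded;
\item the centralizer of $\widetilde G$ lies in the finite centre $Z$;
\item the map $h\mapsto h^{|Z|}$, together with the bound on the number of generators of the abelian group $H$, pushes $H$ into $\widetilde G(\FF_p)$ up to bounded index.
\end{enumerate}

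You instead import Larsen--Pink to get $[\GG^F,\GG^F]\le G^+$, and control the abelian part through outer automorphisms of finite groups of Lie type. That works. But once Larsen--Pink is available, the $\Out(G^+)$ detour is redundant: $[\GG^F,\GG^F]\le G^+\le G\le \GG^F$ (in the adjoint group, up to the bounded centre) already bounds $[G:G^+]$ in terms of $d$.

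The price of your route is weight and generality. Larsen--Pink is a deep theorem, and it is stated for absolutely almost simple $\GG$. The paper's commutator trick gives density of $G^+$, semisimplicity of $\widetilde G$ and the index bound for any semisimple $\GG$. Simplicity enters only in the final Goursat identification of $\widetilde G$ with a power of $\GG$.

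Three points need tidying.
\begin{enumerate}
\item Your semisimplicity paragraph does not work, since replacing $\widetilde G$ by $\widetilde G/R_u$ destroys the identity $G^+=\widetilde G(\FF_p)^+$. It is also not needed. Under density, semisimplicity follows from $\widetilde G$ surjecting onto each factor. Without density it is false: a Sylow $p$-subgroup yields a unipotent $\widetilde G$.
\item Below Nori's threshold you cannot take $\widetilde G$ trivial when $G$ has elements of order $p$. The lemma should just be read for $p\gg_{d,r}1$, as in the paper.
\item In the uniqueness step, the bounded index of $H'^+$ in $H'$ has to come from applying the index bound to $H'$ itself. This is legitimate, because $M$-density of $H'$ in $\widetilde G$ makes its projections to the factors $M_0$-dense. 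This is exactly how the paper concludes.
\end{enumerate}
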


\begin{proof}
Consider the restriction of scalars $\HH_{p}\coloneqq\Res_{\FF_{p}}^{\FF_{q}}\GG_{p}$.
This is a connected algebraic group defined over $\FF_{p}$, which is an $\FF_{p}$-form of $\GG_{p}^{r}$. We have an isomorphism
of algebraic groups $\HH_{p}\cong\GG_{p}^{r}$ over $\FF_{p}^{\al}$
(in fact, over $\FF_{q}$) of constant degree (in fact, induced by
a linear map), and an isomorphism of abstract groups $\GG_{p}\left(\FF_{q}\right)\to\HH_{p}\left(\FF_{p}\right)$,
such that $g\in\GG_{p}\left(\FF_{q}\right)$ is mapped to
$(g,g^{\sigma},\dotsc,g^{\sigma^{r-1}})\in\GG_{p}^{r}\left(\FF_{q}\right)$,
where $\sigma$ is the Frobenius map.
Hence we may view $G$ as a subgroup of $\HH_p(\FF_p)$. By Nori's theorem \cite[Theorem B]{nori}, if $p$ is larger than a constant depending only on $r$ and $d$, there is a connected algebraic subgroup $\widetilde{G}$ of $\HH_p$ defined over $\FF_p$ such that $\widetilde{G}(\FF_p)^+=G^+$, where the superscript $+$ denotes  the subgroup generated by elements of order $p$. Note that $G^+$ is normal in $G$. Moreover the index $[\widetilde{G}(\FF_p):\widetilde{G}(\FF_p)^+]$ is bounded by a constant depending  only on $d,r$  by \cite[Theorem C and Remark 3.6]{nori}. Also Nori's group $\widetilde{G}$ is generated by unipotent subgroups. In particular, its degree is bounded in terms of $d$ and $r$ only.

By \cite[Theorem C]{nori} $G$ has a commutative subgroup $H$ of order prime to $p$ such that $HG^+$ is normal in $G$ and has index bounded in terms of $d,r$ only. The projection of $G$ to each of the $r$ coordinate factors of  $\GG^r_p$ is sufficiently Zariski-dense there by assumption. It follows that the same holds for $HG^+$ and hence also for its derived group 
(indeed, if a subgroup $L$ is sufficiently Zariski dense in an algebraic group $\KK$, then considering the multiplication map $\KK\times\KK\to\KK$, we see that $L\times L$ is sufficiently Zariski dense in $\KK\times\KK$, and thus so is $[L,L]$ in $\KK$),
which is contained in $G^+$.
Hence the same holds for $G^{+}$, and thus for $\widetilde{G}$. That
is, we have shown that the projection of $\widetilde{G}$ to each factor
of $\GG_{p}^{r}$ is $M$-Zariski dense for any given $M>M_0$ as long
as $M_{0}$ is chosen large enough (as a function of $d$ and $r$
only). On the other hand, each projection of $\widetilde{G}$ is closed
and of degree bounded in terms of $d$ and $r$ only, since the same
is true of $\widetilde{G}$.
For this we needed to assume $M_0$ sufficiently large depending on $d$ and $r$ only. As a result, the algebraic subgroup $\widetilde{G}$, which is of bounded degree, must project onto each factor of $\GG^r_p$. In other words, it is a connected subdirect product of $\GG^r_p$.
So, the centralizer of $\widetilde{G}$ in $\HH_p$ is contained in the center $Z$ of $\HH_p$. 
Moreover $\widetilde{G}$ is semisimple and isomorphic (over $\FF_p^{\al}$) to
$\GG^{r'}_p$ for some $r'\leq r$.
Indeed, if for some $i\neq j$, $\tilde{G}$ does not
surject under $\pi_{i}\times\pi_{j}\colon\tilde{G}\to\GG_{p}\times\GG_{p}$
then $\left(\pi_{i}\times\pi_{j}\right)(\tilde{G})\cong\GG_{p}$
by Goursat's lemma and since $\tilde{G}$ is connected, and we can
continue by induction. If no such pair $i\neq j$ exists, then, as
$\GG_{p}$ is perfect, $\tilde{G}=\GG_{p}^{r}$ (see e.g.
\cite[Lemma 3.3]{RibetModularForms}).

Note that $H$ normalizes $G^+$, and hence normalizes $\widetilde{G}$. Indeed, if $g$ normalizes $G^+$ but not $\widetilde{G}$, then $g \widetilde{G} g^{-1} \cap \widetilde{G}$ is a proper algebraic subgroup of $\widetilde{G}$, and hence (by Lang--Weil or Schwartz--Zippel, see \cite[Section 6]{bggt}) the number of its $\FF_p$-points is much smaller than $|G^+|$, which has bounded index in $\widetilde{G}(\FF_p)$
in terms of $d$ and $r$ only. This is
a contradiction as long as $p$ is large enough in terms of $d$ and $r$, which we are allowed to assume.
Now, $\Out(\widetilde{G})$ is bounded in terms of $\dim \widetilde{G}$ only, as $\widetilde{G}$ is semisimple.
Consequently, the kernel $H_{0}$
of the homomorphism $H\to\Out(\tilde{G})$ has bounded
index in $H$. For every $h_{0}\in H_{0}$ there is $g_{0}\in\tilde{G}(\FF_{p}^{\al})$
such that $g_{0}^{-1}h_{0}$ centralizes $\tilde{G}$ and thus $g_{0}^{-1}h_{0}\in Z$.
Consider the map $i_Z:H\to  H$ sending $h$ to $h^{|Z|}$. We conclude that $i_Z(H_0)\leq \widetilde{G}(\FF_p^{\al})$.
But $H\leq \HH_p(\FF_p)$, so in fact  $i_Z(H_0)\leq \widetilde{G}(\FF_p)$. In particular $[H_0:H_0\cap \widetilde{G}(\FF_p)]\leq [H_0:i_Z(H_0)]$. But since $H\leq \GG_p(\FF_q)$ is abelian of order prime to $p$, it belongs to a maximal torus defined over $\FF_q$. This torus must split over a finite extension of $\FF_q$. Hence $H$ is isomorphic to a subgroup of $(\FF_{q'}^{\times})^{m}$, where $m$ is the absolute rank of $\GG_p$ and $q'$ a power of $q$. This is an abelian group on at most $m$ generators. It follows that $[H_0:i_Z(H_0)] \leq  |Z|^{m}$. We conclude that $[H:H\cap \widetilde{G}(\FF_p)]\leq [H:H_0]|Z|^m$ is bounded in terms of $d$ and $r$ only. On the other hand, $\widetilde{G}(\FF_p)^+=G^+$ and  $[\widetilde{G}(\FF_p): \widetilde{G}(\FF_p)^+]$ is similarly bounded, hence so are $[H:H\cap G^+]$, $[HG^+:G^+]$ and $[G:G^+]$.


We end this discussion by establishing the last clause of the lemma. First, note that, since the index of $G^+=\widetilde{G}(\FF_p)^+$ in $\widetilde{G}(\FF_p)$ is bounded in terms of $r$ and $d$ only, and since $\widetilde{G}$ is connected,  the Lang--Weil estimates imply that $G^+$ must be $M$-dense in $\widetilde{G}$ as soon as $p$ is large enough, say $p\ge p_{M,r,d}$. Now suppose that $M \ge M_0$ and that the subgroup $L\leq G^+$ is $M$-dense in $\widetilde{G}$. Then its projection to each $\GG_p$ factor is $M_0$-dense. So $L$ satisfies the same hypotheses as $G$ and we conclude by the above that there is a connected semisimple algebraic subgroup $\widetilde{L}$ of $\HH_p$ defined over $\FF_p$ such that $L \cap \widetilde{L}(\FF_p)$ has bounded index in $L$, say $[L:L\cap \widetilde{L}(\FF_p)]=O_{r,d}(1)$ and $L^+=\widetilde{L}(\FF_p)^+$. It follows that $L\cap \widetilde{L}(\FF_p)$, and hence $\widetilde{G}\cap \widetilde{L}$ is $M/O_{r,d}(1)$-dense in $\widetilde{G}$ and hence $\widetilde{L} = \widetilde{G}$. So $L^+=G^+$ and thus $L=G^+$.
\end{proof}

\noindent \emph{Example.} Suppose $p>3$ and $G$ is the unitary group $\SU_n(p):=\{g \in \SL_n(\FF_{p^2}) | g^Tg^\sigma = 1\}$ where $\sigma:x \mapsto x^p$ is the Frobenius automorphism. Then $G=G^+$, $\GG=\SL_n$,  $q=p^2$ and $r=2$, $\widetilde{G}$ is a non-split $\FF_p$-form for $\SL_n$ and a proper $\FF_p$-subgroup of $\Res^{\FF_q}_{\FF_p} \GG_p$. On the other hand, if $G$ lies in between $\PSU_n(p)$ and $\PGU_n(p):=\{g \in \PGL_n(\FF_{p^2})| g^Tg^\sigma = 1\}$, then $G^+=[G,G]=\PSU_n(p)$,  $[G:G^+]\leq [\PGU_n(p):\PSU_n(p)]=\gcd\{n,q+1\}$, $\GG=\PGL_n$, $r=2$, and $\widetilde{G}(\FF_p)=\PGU_n(p)$. See \cite[chapter 11]{grove} for a detailed study of unitary groups.
\bigskip


From Proposition \ref{degprop} and Lemma \ref{andrewslem} we conclude:

\begin{lem}\label{generating} In the setting of Lemma \ref{norilem}, there is $p_0=p_0(d, m, r)>1$ such that if $p>p_0$, then $(g_1,\ldots,g_m) \in G^+$ generates $G^+$ if and only if $(g_1,\ldots,g_m) \notin V^{(m)}_{M_1,p}(\widetilde{G})$.
\end{lem}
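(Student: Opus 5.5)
We will prove the two implications separately, working inside $\widetilde{G}\simeq \GG_p^{r'}$ over $\FF_p^{\al}$ (with $r'\le r$), as provided by Lemma \ref{norilem}. Here $V^{(m)}_{M_1,p}(\widetilde{G})$ denotes the locus of $m$-tuples in $\widetilde{G}$ generating a non-$M_1$-dense subgroup of $\widetilde{G}$. This is the variety of Proposition \ref{degprop} applied to the Chevalley scheme $\GG^{r'}_\ZZ$, transported through the bounded-degree isomorphism $\widetilde{G}\simeq\GG_p^{r'}$. There are only boundedly many possibilities for $r'$ and for $\GG$ (given $d$), so every constant below depends only on $d,m,r$. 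We may also replace $M_1$ by $M=\max(M_1,M_0')$, where $M_0'$ is a density threshold adapted to $\widetilde{G}$ (for instance $M_0(d,r)$ composed with the degree distortion of the isomorphism). Lemma \ref{andrewslem} gives $V^{(m)}_{M,p}=V^{(m)}_{M_1,p}$ for $p>p_M$, so we are free to work with this larger $M$.

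\emph{If the tuple does not lie in $V^{(m)}_{M_1,p}(\widetilde{G})$, then it generates $G^+$.} Let $L$ be the group generated by $(g_1,\dots,g_m)$. Since the tuple lies in $G^+$, we have $L\le G^+$. Because the tuple is outside $V^{(m)}_{M,p}=V^{(m)}_{M_1,p}$, the group $L$ is $M$-dense in $\widetilde{G}$. The last clause of Lemma \ref{norilem} then gives $L=G^+$, once $p\gg_{M,r,d}1$ and $M\ge M_0$. Since $M$ depends only on $d,m,r$, this lower bound on $p$ is acceptable.

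\emph{If the tuple generates $G^+$, then it does not lie in $V^{(m)}_{M_1,p}(\widetilde{G})$.} It suffices to show that $G^+$ itself is $M_1$-dense in $\widetilde{G}$. This is the Lang--Weil argument already used in Lemma \ref{norilem}. The group $G^+=\widetilde{G}(\FF_p)^+$ has index $O_{d,r}(1)$ in $\widetilde{G}(\FF_p)$, so $|G^+|\gg p^{\dim\widetilde{G}}$. On the other hand, a proper subvariety of $\widetilde{G}$ of degree at most $M_1$ contains only $O_{M_1,d}(p^{\dim\widetilde{G}-1})$ points over $\FF_p$, by Schwartz--Zippel. Hence $G^+$ is not contained in such a subvariety when $p$ is large.

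Taking $p_0$ to be the maximum of $p_M$, the constant from Nori's theorem, the constant in the last clause of Lemma \ref{norilem}, and the Lang--Weil threshold concludes the proof. The main obstacle is the bookkeeping needed to make sure $V^{(m)}_{M_1,p}$ is genuinely defined relative to $\widetilde{G}$, which is only a twisted form of $\GG^{r'}$ over $\FF_p$. I would handle this by noting that $M$-density and generation are geometric statements, so it is enough to check them after base change to $\FF_p^{\al}$, where $\widetilde{G}\cong\GG_p^{r'}$ via a map of bounded degree; this costs only a bounded change of $M$. A secondary point is that the choices of $r'\le r$ and of $\GG$ are finitely many, so the resulting $p_0$ is uniform.
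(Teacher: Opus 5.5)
Your proposal is correct and follows the same route as the paper, whose proof is a one-liner leaning on Proposition \ref{degprop} and Lemma \ref{andrewslem}. As you do, it uses Lemma \ref{andrewslem} to replace $M_1$ by a large $M$, and the last clause of Lemma \ref{norilem} ($G^+$ is $M$-dense in $\widetilde{G}$ by Lang--Weil, and is the only subgroup of itself that is $M$-dense) to identify generating $G^+$ with lying outside $V^{(m)}_{M_1,p}$ for $p$ large.
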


\subsection{Bourgain-Gamburd method}
Recall the ingredients of the Bourgain-Gamburd method (\cite{bourgain-gamburd}, \cite[Proposition 3.1]{bggt}, \cite[Proposition 3.1]{breuillard-standrews}, \cite{tao-expansion-book}). For a finite group $G$ with symmetric generating set $S$ assume: a) \emph{quasirandomness}, i.e. the property that the dimension of any non-trivial representation is at least $|G|^{\kappa_1}$, b) a \emph{product theorem} namely the fact that $|AAA| \ge \max\{|G|,|A|^{1+\eps_0}\}$ for every generating set $A \subset G$, and c) an exponential \emph{non-concentration estimate} on proper subgroups asking for some $\ell\geq \kappa_2 \log |G|$ such that
\begin{equation}\label{noncon}\mu_S^\ell(H) \leq \exp(-c\ell)\end{equation}
for every proper subgroup $H$ of $G$. Here $\mu_S:=\frac{1}{2|S|}\sum_{s \in S} (\delta_s+\delta_{s^{-1}})$. The conclusion is that $\Cay(G,S)$ is an $\eps$-expander graph, namely \eqref{expand} holds for $A=S$ and any $B$ with $|B|\leq |G|/2$,  for some $\eps=\eps(\kappa_1,\eps_0,c,\kappa_2)>0$. See \cite[Proposition 3.1]{breuillard-standrews} for quantitative information on how the lower bound $\eps$ depends on the parameters.


Quasirandomness of the groups $G$ appearing in Theorem \ref{expmain} follows from the quasi-randomness of finite simple groups of Lie type \cite{landazuri-seitz}. Indeed, the group $\widetilde{G}(\FF_p)$ from Lemma \ref{norilem} is the group of $\FF_p$-points of a connected semisimple algebraic group defined over $\FF_p$ and $\widetilde{G}(\FF_p)^+$ is isogenous to a product of quasisimple groups (see e.g. \cite[Theorem 24.17]{malle-testerman}). The exponent $\kappa_1$ will depend only on $r=\log q/\log p$ and $\dim \GG$. The product theorem for $G$ was proved in \cite{breuillard-green-tao-linear} and \cite{pyber-szabo} when $\GG$ is absolutely simple. The semisimple case is treated in \cite[Theorem 8.1]{bggt} as well as in \cite[Theorem 4.13]{breuillard-standrews}  (see also \cite{eberhard-pyber-szabo} for the most general version of the product theorem in the non-semisimple case). Therefore, the only condition left to be checked is the non-concentration estimate on proper subgroups \eqref{noncon}. 



\subsection{Proof of Theorem \ref{expmain}}
\label{subsec-proofmain}

Lemma \ref{norilem} tells us that $G^+$ has bounded index in $G$. Using the following lemma we can thus assume without loss of generality that 
$G=G^+=\widetilde{G}(\FF_{p})^{+}$.

\begin{lem}\label{quotoverspec} If a finite group $G$ is $\eps$-expanding,  then so is every quotient $G/N$, $N \vartriangleleft G$. Conversely, if $H\leq G$ is an $\eps$-expanding subgroup of index $m$, then $G$ is $\eps/2m$-expanding.
\end{lem}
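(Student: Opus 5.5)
Both parts follow directly from the combinatorial definition of $\eps$-expansion, namely \eqref{expand}: $|AB|\ge(1+\eps)|B|$ for all $A\ni 1$ generating and all $B$ with $|B|\le |G|/2$.

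\emph{Quotients.} Let $\pi:G\to G/N$ be the projection. Take $\bar A\ni 1$ generating $G/N$ and $\bar B\subset G/N$ with $|\bar B|\le |G/N|/2$. Set $B=\pi^{-1}(\bar B)$, so $|B|=|N||\bar B|\le |G|/2$. Set $A=\pi^{-1}(\bar A)$; it contains $1$ and generates $G$, since it contains $N$ and maps onto a generating set. Every element of $AB$ maps into $\bar A\bar B$, so $AB\subset\pi^{-1}(\bar A\bar B)$. Hence $|N||\bar A\bar B|\ge|AB|\ge(1+\eps)|N||\bar B|$, which gives the claim.

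\emph{Finite index subgroups.} Let $H\le G$ have index $m$ and be $\eps$-expanding. Take $A\ni 1$ generating $G$ and $B$ with $|B|\le|G|/2$. I pass to a generating set of $H$ via Schreier's lemma.
\begin{itemize}
\item Fix coset representatives $T$ for $H\backslash G$ inside the ball $A^{m-1}$. This is possible because the Schreier graph on cosets is connected with $m$ vertices.
\item The Schreier generators $S=\{t a (\overline{ta})^{-1}\}$, together with $1$, generate $H$ and lie in $A^{2m-1}$.
\end{itemize}
Decompose $B=\bigcup_{t} B_t$ with $B_t = B\cap Ht$, and write $B_t=C_t t$ with $C_t\subset H$.

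The aim is to show that $|A^{k}B|\ge(1+\eps')|B|$ for some $k\le 2m$ and some $\eps'\gg\eps$. One then converts back to a single step of $A$: if $|AB|<(1+\eta)|B|$, iterating gives $|A^kB|<(1+\eta)^k|B|$. More precisely, since $1\in A$ the sets $A^jB$ increase, so some step grows by at least $|A^kB|^{1/k}$-type ratio. Taking $\eta\approx\eps'/k$ yields expansion constant of order $\eps/m$, and the constants can be arranged to give $\eps/2m$.

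\emph{Two cases for $A^kB$.}
\begin{itemize}
\item \emph{Unbalanced fibres.} Suppose some fibre $C_t$ has size at most $|H|/2$ and carries a substantial fraction of $B$. Applying expansion in $H$ with the generating set $S\cup\{1\}$, the set $(S\cup\{1\})C_t t$ exceeds $C_t t$ by $\eps|C_t|$.
\item \emph{Full fibres.} Otherwise most of the mass of $B$ lies in fibres that are more than half full. Since $|B|\le|G|/2$, some cosets are then nearly empty. Multiplying by elements of $A^{m-1}$ moves a full fibre onto an emptier coset, because the cosets are connected by $A$. This gains a constant fraction.
\end{itemize}

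\emph{Main obstacle.} The hard step is the bookkeeping that ensures gains in different fibres do not overlap and that the final constant is exactly $\eps/2m$. To handle it, I would avoid fibres entirely and use the spectral formulation. Two facts are needed:
\begin{itemize}
\item combinatorial expansion is equivalent, up to constants, to a spectral gap for the random walk;
\item restriction to $H$ via the induced random walk (the first-return walk to $H$) preserves a gap with loss $O(m)$.
\end{itemize}
If the stated constant $\eps/2m$ is sensitive to these losses, I would accept any $\eps'=\eps'(\eps,m)>0$, which is all that is used later.
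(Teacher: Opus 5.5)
Your quotient argument is correct and is what the paper means by ``immediate from \eqref{expand}''. Your plan for the finite-index part is also the paper's: split $B$ along $H$-cosets and use Schreier generators that are words of length at most $2m-1$ in $A$. But the execution has two genuine gaps.

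\emph{First gap: left versus right cosets.} You fibre over right cosets $Ht$, so that left multiplication by $S\subset H$ preserves each $B_t$. In the ``full fibres'' case, however, you move fibres by left multiplication by elements of $A^{m-1}$, and $gHt$ is a right coset of $H$ only if $g\in N_G(H)$. The Schreier graph on $H\backslash G$ that you used to choose $T$ is for right multiplication and is irrelevant to $|AB|$. So this step fails unless $H$ is normal; that holds for $H=G^+$ in the paper's application, but it is not assumed in the lemma.

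The consistent bookkeeping is over left cosets $xH$, which $A$ permutes. Inside $xH$, use the set $x^{-1}(A^{2m-1}\cap xHx^{-1})x$. It contains $1$ and generates $H$ by Schreier's lemma for the stabiliser $xHx^{-1}$ of $xH$, provided you build it from forward and return paths in the directed Schreier graph on $G/H$. (Since $A$ need not be symmetric, $ta(\overline{ta})^{-1}$ need not lie in $A^{2m-1}$.) With this change, and with ``substantial'' meaning at least $|B|/(2m)$ (so that otherwise some fibre is less than a quarter full), your dichotomy does give $|A^{2m-1}B|\ge(1+\min\{\eps,1\}/(2m))|B|$.

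\emph{Second gap: the return from $A^{2m-1}$ to $A$ is invalid.} The bound $|AB|<(1+\eta)|B|$ for one set $B$ gives no control on $|A\cdot AB|$. Likewise, growth at some step $A^jB\to A^{j+1}B$ says nothing about the step $B\to AB$. For example, take $\mathrm{Sym}(n)$ with $K=\mathrm{Sym}(n-1)$, $B=K$, and $A=K\cup\{g\}$ for a transposition $g\notin K$. Then $|AB|=2|B|$, but $A^2B\supseteq K\cup KgK=G$, so $|A^2B|=n|B|$.

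What does hold is the telescoping bound $|sB\setminus B|\le\sum_{i=1}^{k}|a_iB\setminus B|\le k\,|AB\setminus B|$ for $s=a_1\cdots a_k$ with $a_i\in A$.
\begin{itemize}
\item It handles the inter-fibre gain, which comes from a single word of length at most $m-1$.
\item It does not directly handle the in-fibre gain. There, expansion of $H$ only gives growth under the whole generating set $S$, and isolating one good word from $S$ costs a factor of $|S|$, which grows with $|A|$.
\end{itemize}
That loss is harmless when $|A|$ is bounded, but it must be addressed, and this route does not give $\eps/2m$.

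Your spectral fallback does not fill the gap either.
\begin{itemize}
\item You need a gap on $H$ to force one on $G$, whereas the standard comparison for the first-return (trace) chain runs the other way.
\item That chain's step law is not uniform on a generating set, so the hypothesis on $H$ does not apply to it as stated.
\item Passing between \eqref{expand} and a spectral gap loses powers of $|A|$.
\end{itemize}
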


\begin{proof}The first assertion is immediate from \eqref{expand}.  For the second assertion one decomposes $B \subset G$ into $H$-cosets and exploits the fact that for every generating set of $H$ there are words of length at most $2m$ that form a generating set of $G$ (e.g. \cite[C.1]{breuillard-green-tao-linear}). 
\end{proof}

Note that in order to prove the theorem, it is enough to assume that the generating set $A$ consists of two elements, their inverses and the identity.  This follows from Lemma \ref{generating} and from escape from subvarieties (see \cite[Lemma 3.1]{becker-breuillard}) applied to the field $K=\FF_p^{\al}$, the set
$\Sigma= (A \times \{1\}) \cup  (\{1\} \times A) \cup \{(1,1)\}$ in $\GG_{p} \times \GG_{p} \leq \GL_d \times \GL_d \leq \GL_{2d}$
and the variety $V_{M_1,p}^{(2)}$. Indeed, this gives a constant $N=N(d)$ such that for every generating set $A$ of $G^+$, one can always find a pair of generators expressible as words of length at most $N$ with letters in $A$. If this pair is $\eps$-expanding, then $A$ is $\eps/N$-expanding.

The algebraic group $\widetilde{G}$ obtained in Lemma \ref{norilem} is an $\FF_p$-form of a cartesian power $\GG_p^{r_p}$, for some $r_p\leq r$. Below we denote by $\HH_\ZZ$ any cartesian power $(\GG_\ZZ)^i$, $i\in \{1,\ldots,r\}$. We view $\HH_\ZZ$ as a $\ZZ$-subscheme of $\SL_{n}$ for some fixed $n=n(d,r)$. We let $\HH=\HH_\ZZ\otimes \QQ^{\al}$ and $\HH_p=\HH_\ZZ \otimes \FF_p$.



Let $W_{\ell}$ be the set of all pairs $\left(w_{1},w_{2}\right)$, where each $w_{i}$
is a word (not necessarily reduced) of length $\ell$ over the alphabet
$\{ x_{1}^{\pm 1},x_{2}^{\pm 1}\} $.
For $\overline{w}=\left(w_{1},w_{2}\right)\in W_{\ell}$
and $\overline{g}=\left(g_{1},g_{2}\right)$,
where $g_1$ and $g_2$ are elements of some group $L$, let
$\overline{w}\left(\overline{g}\right)=\left(w_{1}\left(\overline{g}\right),w_{2}\left(\overline{g}\right)\right)$,
where $w_{i}\left(\overline{g}\right)$ is the element of $L$ resulting
from applying the substitution $x_{i}\leftarrow g_{i}$ to $w_{i}$.
By Theorem 1.3 in \cite{becker-breuillard} applied to $\HH\times \HH$ and the uniform measure on the $16$ pairs $(g_i^{\pm1},g_j^{\pm1})$, there is $C=C\left(d\right)$ such
that
\[
\PP\left(\overline{w}\left(\overline{g}\right)\in V_{M_1}^{\left(2\right)} \right)\leq Ce^{-\ell/C}
\]
for every $\ell\geq1$ and $\overline{g}\in \HH^{2}\setminus V_{M_1}^{\left(2\right)}$, where $\overline{w}$ is chosen uniformly at random  in $W_\ell$.
Equivalently, for every subset $W'_{\ell}$ of $W_{\ell}$, if $\left|W'_{\ell}\right|>Ce^{-\ell/C}\cdot\left|W_{\ell}\right|$
then
\[
\left\{ \overline{g}\in \HH^{2}\mid\overline{w}\left(\overline{g}\right)\in V_{M_1}^{\left(2\right)}\,\,\forall\overline{w}\in W'_{\ell} \right\} \subseteq V_{M_1}^{\left(2\right)} \,\,\text{.}
\]
Thus, by Lemma \ref{lem:deg-height--word-eqs} and Proposition \ref{prop:types},
\[
\left\{ \overline{g}\in \HH_{p}^{2}\mid\overline{w}\left(\overline{g}\right)\in V_{M_1,p}^{\left(2\right)} \,\,\forall\overline{w}\in W'_{\ell} \right\} \subseteq V_{M_1,p}^{\left(2\right)} 
\]
for every prime number $p$ outside a set $\calB^{\left(\ell\right)}$
of prime numbers such that 
\begin{align*}
\left|\calB^{\left(\ell\right)}\right|\ll_{n}\left|W_{\ell}\right|^{n^{2}+1}\ell^{O_{n}\left(1\right)}\ll_{n}2^{5n^{2}\ell}
\end{align*}
Thus,
\[
\PP_{\overline{w}\sim\Unif\left(W_{\ell}\right)}\left(\overline{w}\left(\overline{g}\right)\in V_{M_1,p}^{\left(2\right)} \right)\leq Ce^{-\ell/C}
\]
for every prime number $p\notin\calB^{\left(\ell\right)}$ and every $\overline{g}\in \HH_p^{2}\setminus V_{M_1,p}^{\left(2\right)} $.

Let $\calB_{\delta}=\bigcup_{k=1}^{\infty}\calB^{\left(\lfloor\delta k/\left(5n^{2}\right)\rfloor\right)}\cap[2^{k-1},2^{k})$.
Then for all $X\geq 1$,
\begin{align*}
\left|\left\{ p\in\calB_{\delta}\mid p<X\right\} \right| & \leq
\sum_{k=1}^{\lceil\log_{2}X\rceil}\left|\calB^{\left(\lfloor\delta k/\left(5n^{2}\right)\rfloor\right)}\right| \leq
\sum_{k=1}^{\lceil\log_{2}X\rceil}2^{\delta k}
\ll_{n}X^{\delta}\,\,\text{,}
\end{align*}
proving \eqref{eq:gaps-bad-primes-sparse}.

Take a prime number $p\notin\calB_{\delta}$. Take $k$ such that $p\in[2^{k-1},2^{k})$,
and let $\ell_{p}=\lfloor\delta k/\left(5n^{2}\right)\rfloor$. 
Then $\ell_{p}\gg_{n} \delta \log p$ and $\log p \ll_n \log |G|$ since $G$ has bounded index in $\widetilde{G}(\FF_p)$, which by the Lang--Weil estimate has cardinality $p^{\dim \widetilde{G}}(1+o_{n}(1))$. In particular, $\ell_p \geq \kappa_2 \log |G|$, where $\kappa_2 \ll_n \delta$, and as soon as $p\gg_{\delta,n} 1$, we get that for all  $\overline{g}=(g_1,g_2) \in \HH_p^2 \setminus V^{(2)}_{M_1,_p}$,
\begin{equation*}\PP_{\overline{w}\sim\Unif(W_{\ell_{p}})}\left(\overline{w}(\overline{g})\in V^{(2)}_{M_1,_p}\right)  \leq e^{-\ell_{p}/2C}.
\end{equation*}
By Lemma \ref{generating}, if $H$ is a proper subgroup of $G=G^+$ and  $w_1(\overline{g}),w_2(\overline{g})\in H$, then $\overline{w}(\overline{g}) \in V_{M_1,p}^{(2)}$. Hence for $\PP=\PP_{\overline{w}\sim \Unif (W_{\ell_p})}$:
\begin{equation*}\mu_{\overline{g}}^{\ell_p}(H)^2  = \PP(w_1(\overline{g}), w_2(\overline{g}) \in H)
 \leq \PP(\overline{w}(\overline{g}) \in  V_{M_1,p}^{(2)})
 \leq \exp(-\ell_p/2C).
\end{equation*}
This means that the non-concentration estimate $(\ref{noncon})$ holds with $c=1/4C$ and $\kappa_2 \ll_n \delta$ for every pair $\overline{g}=(g_1,g_2)$ that generates $G^+$.  We are done.

\begin{rem}\label{explicit}
Examining the proof, we can see that $\log \eps^{-1} \ll_{d,r} 1+\delta^{-1}$. 
\end{rem}

\subsection{Perfect groups}
It turns out that Theorem \ref{expmain} continues to hold for \emph{perfect} groups $\GG$, i.e. $[\GG,\GG]=\GG$. Perfectness is a necessary assumption for a uniform spectral gap, because large abelian quotients are obstructions to the property of being an expander. Thanks to recent work of Golsefidy-Srinivas \cite{golsefidy-srinivas}, expansion for perfect groups can be reduced to the semisimple case. We have:

\begin{thm}\label{perfectaddon}Let $\GG$ be a perfect connected algebraic group defined over $\QQ$ and $\delta>0$. There is $\eps>0$ and  a (possibly empty) set of prime numbers $\mathcal{B}_\delta$ satisfying \eqref{eq:gaps-bad-primes-sparse} such that for all primes $p\notin \mathcal{B}_\delta$  $\GG(\FF_p)$, is $\eps$-expanding.
\end{thm}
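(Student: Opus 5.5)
The plan is to reduce to the semisimple case treated in Theorem \ref{expmain} (through Corollary \ref{expansion}), using the Golsefidy--Srinivas reduction. Let $\GG=\calN\rtimes\LL$ be a Levi decomposition over $\QQ$, with $\calN$ the unipotent radical and $\LL$ a reductive Levi subgroup. Since $\GG$ is perfect, $\LL$ is semisimple, and $\GG/\calN\cong\LL$. Fix a $\ZZ$-model of $\GG$. For $p$ outside a finite set $S_0$ (depending on $\GG$), reduction mod $p$ preserves this structure. That is, $\GG_p$ is perfect, $\GG_p(\FF_p)=\calN_p(\FF_p)\rtimes\LL_p(\FF_p)$, and $\LL_p$ is a connected semisimple $\FF_p$-group of bounded rank. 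We put $S_0$ into $\calB_\delta$; this does not affect \eqref{eq:gaps-bad-primes-sparse}, since we only need the bound for $X\gg_\delta 1$.

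\textbf{Semisimple quotient.} For $p\notin\calB_\delta$ (the set from Corollary \ref{expansion}, enlarged by $S_0$), $\LL_p(\FF_p)$ is $\eps_0$-expanding. This is Corollary \ref{expansion} applied with $n=\rk\LL$; note that $\LL$ is a fixed group, so its $\FF_p$-reductions have bounded rank. Moreover $\LL_p(\FF_p)^+$ has bounded index, and by Lemma \ref{quotoverspec} the index issues cost only constant factors.

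\textbf{Lifting through the unipotent radical.} This is where the Golsefidy--Srinivas input \cite{golsefidy-srinivas} enters, and it is the main step. Their result says roughly the following. Suppose $\Gamma$ is a finite group with a normal subgroup given by the $\FF_p$-points of a unipotent group, filtered so that each layer is an $\FF_p$-vector space on which the quotient acts through a representation with no trivial constituents (this is guaranteed by perfectness of $\GG$). If the quotient $\Gamma/\calN_p(\FF_p)$ has a uniform spectral gap for all generating sets, then so does $\Gamma$, with a gap depending only on the original gap and the dimensions.

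I would apply this layer by layer, using the lower central series of $\calN$ (each quotient in it is $\LL$-stable). For each abelian layer $V=\FF_p^k$, the argument goes as follows.
\begin{enumerate}
\item Use Fourier analysis on $V$: a nontrivial character $\chi$ of $V$ has $\Gamma$-orbit of size $\gg p$. This is because $\LL_p(\FF_p)$ acts without fixed nonzero vectors on the dual, for $p$ large.
\item Combine this with quasirandomness, and with expansion of the quotient acting on the orbit, to get a gap on the $\chi$-isotypic part.
\end{enumerate}
Rather than redoing this, I would cite their reduction theorem verbatim, checking that its hypotheses hold uniformly in $p\notin S_0$: perfectness of $\GG_p$, and boundedness of dimensions.

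The obstacle I expect is precisely this uniformity in the reduction step, namely that the gap obtained for $\GG(\FF_p)$ depends only on $\eps_0$ and $\dim\GG$, and not on $p$. In particular one must ensure that the action of $\LL_p$ on each layer of $\calN_p$ has no trivial quotient for every large $p$. This holds because it holds over $\QQ$ (by perfectness) and is preserved by reduction outside finitely many primes. If needed, this can be made explicit via Proposition \ref{prop:inclusion-preserved-mod-p}, since the fixed-point locus is a variety. Combining all of the above gives $\eps=\eps(\eps_0,\GG)>0$ for all $p\notin\calB_\delta$.
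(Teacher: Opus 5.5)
Your proposal is essentially the paper's proof, which simply combines \cite[Corollary F]{golsefidy-srinivas}, applied to a Levi decomposition $\GG=\HH\ltimes\UU$, with Corollary \ref{expansion} for the semisimple factor. The paper also passes to the simply connected cover of the Levi factor to match the Golsefidy--Srinivas setting; you omit this step, but it costs only bounded index via Lemma \ref{quotoverspec}. One caution: your side claim that perfectness forces every lower-central-series layer of $\calN$ to have no trivial $\LL$-constituent is false (in the perfect group $\SL_2\ltimes\mathrm{Heis}_3$ the centre of the Heisenberg group is a trivial module). It is not load-bearing, though, since you invoke the Golsefidy--Srinivas reduction as a black box whose hypotheses concern perfectness of $\GG$, not the individual layers.
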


\begin{proof}This follows directly by combining \cite[Corollary F]{golsefidy-srinivas} and Corollary \ref{expansion}. In the notation of \cite{golsefidy-srinivas}, we have $\GG=\HH\ltimes \UU$, where $\UU$  is unipotent and $\HH$ is a semisimple $\QQ$-group, which without loss of generality we may assume to be simply connected (otherwise pass to a simply connected cover). Note that $\HH(\FF_p)$ is well-defined for all but finitely many primes $p$.
\end{proof}

\begin{rem}

 (i) The same proof gives uniform expansion for quotients of the form $\GG(\ZZ/q\ZZ)$ where $q$ is a product of a bounded number of distinct primes outside $\mathcal{B}_\delta$. See the formulation of \cite[Corollary F]{golsefidy-srinivas}, which now holds for all perfect $\GG$ thanks to Theorem \ref{perfectaddon}.

(ii) The use of \cite{golsefidy-srinivas} could be circumvented by using our method exclusively, but it would require to extend  \cite[Theorem 1.3]{becker-breuillard} to affine groups. This is possible, but is left for future work.
\end{rem}

\section{Proof of the dimension zero theorem}\label{dzerosec}
In this section we prove:

\begin{thm}(dimension zero theorem)\label{expmaindim0} For every  $d\ge 1$, and $m \ge 2$, there is $M_1\ge 1$ and a function $\eps=\eps(\delta,d,m)>0$ with the following property. If  $\GG_\ZZ$ is a simple Chevalley $\ZZ$-subscheme of $\GL_d$, $q$ is a power of a prime $p$ and $G\leq \GG_p(\FF_q)$ an $M_1$-dense subgroup, then for every $\delta>0$ all but possibly at most $O_{\delta,d,m}(q^\delta)$  $G$-conjugacy classes of $m$-tuples $(a_1,\ldots,a_m)\in G^m$ that generate $G$ are $\eps$-expanding. 
\end{thm}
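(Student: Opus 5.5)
We follow the Bourgain--Gamburd strategy exactly as in the proof of Theorem \ref{expmain}. Quasirandomness and the product theorem hold uniformly for $M_1$-dense subgroups of $\GG_p(\FF_q)$: by Larsen--Pink these are subfield subgroups sandwiched between $[H,H]$ and $H=\GG(\FF_p^{\al})^F$. So only the non-concentration estimate \eqref{noncon} needs proof, with $\ell\asymp\delta'\log q$. Lemma \ref{quotoverspec} lets us pass to $[H,H]$. The essential new point is to replace reduction modulo primes by an argument directly in characteristic $p$, using the characteristic-free anti-concentration bound of \cite{becker-breuillard}. That bound says: for $\overline{g}\in\GG^m(k)\setminus V^{(m)}_{M}$ with $k$ algebraically closed of \emph{any} characteristic, $\PP_{\overline{w}\sim\Unif(W_\ell)}(\overline{w}(\overline{g})\in V^{(m)}_M)\le Ce^{-\ell/C}$. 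Here $V^{(m)}_M$ is the locus of Proposition \ref{degprop}, and its complement consists of tuples generating an $M$-dense subgroup.

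The difficulty is that a tuple $\overline{g}\in G^m$ generating $G$ lies \emph{inside} the locus of tuples generating a non-Zariski-dense subgroup. So we cannot apply the bound to it directly; we need the relevant bad locus to be ``non-$M$-dense'', and $G$ itself is $M$-dense only up to some $M$ depending on $q$. The plan is as follows. Fix $M\ge M_1$ large but bounded in terms of $d$, so that every $M$-dense subgroup of $\GG_p(\FF_q)$ contains $[H,H]$ for the relevant Frobenius. Then a proper subgroup $L<G$ that is large in $G$ is either non-$M_1$-dense, or is a smaller subfield subgroup of size at most $|G|^{1/2+o(1)}$. The first kind is handled by anti-concentration: if $\overline{w}(\overline{g})\in L^m$ then $\overline{w}(\overline{g})\in V^{(m)}_{M_1}$. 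The second kind must be handled by counting. A smaller subfield subgroup is contained in the $\FF_{q'}$-points for $q'\le q^{1/2}$ (up to conjugacy and diagram twist), and the random walk $\mu^\ell$ gives it mass at most roughly $\mu^{2\ell}(1)^{1/2}$ times its size. Alternatively, it is simpler to say that $\ell$-step words landing in a subgroup defined over $\FF_{q'}$ satisfy trace identities $\mathrm{tr}\,w(\overline{g})^{q'}=\mathrm{tr}\,w(\overline{g})$. I would instead treat this case by the same scar-counting as below.

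Define, for each subset $W'\subset W_\ell$ with $|W'|>Ce^{-\ell/C}|W_\ell|$, the closed set $Z_{W'}=\{\overline{g}\in\GG_p^m:\overline{w}(\overline{g})\in V^{(m)}_{M_1}\ \forall \overline{w}\in W'\}$. Over the algebraically closed field $\FF_p^{\al}$, the anti-concentration theorem says that $Z_{W'}\subseteq V^{(m)}_{M_1}$ only for tuples already in $V^{(m)}_{M_1}$ \emph{as algebraic points}. However, the failure for our $\overline{g}$ occurs because $\overline{g}$ generates the finite group $G$, which is $M_1$-dense but not Zariski-dense. The fix is to use the bound only with $M$-density in place of Zariski-density. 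Points $\overline{g}$ generating an $M$-dense group satisfy the hypothesis, so a tuple $\overline{g}$ that fails non-concentration must lie in some irreducible component of some $Z_{W'}$ not contained in $V^{(m)}_M$; the ``scars''. By Lemma \ref{lem:deg-height--word-eqs} the defining equations have degree $\ll_d \ell M^{O(1)}$. By Lemma \ref{lem:component-of-few-eqs-1} (the bound on types, crucially exponential rather than doubly exponential), the number of such components over all $W'$ is at most $2|W_\ell|^{D}(\ell M)^{O_d(1)}=e^{O_d(\ell)}$ with $D=\dim\GG^m$. The key claim is that each such component $C$, being a component not contained in $V^{(m)}_M$ yet with all points having many words in $V^{(m)}_{M_1}$, must be zero-dimensional modulo conjugation, i.e. a finite union of $\GG$-orbits. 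Indeed, if $C$ had positive dimension transverse to orbits, a generic point of $C$ would generate a group whose closure is still non-$M$-dense for all $M$. That is a Zariski-dense subgroup or one of positive-dimensional closure, to which the anti-concentration bound applies, giving a contradiction. Bezout then bounds the number of $\GG(\FF_q)$-conjugacy classes of $\FF_q$-points in such orbit-components by $\deg C\le(\ell M)^{O_d(\ell)}$... here the degree must be controlled more carefully: via Lemma \ref{lem:component-of-few-eqs-1} the degree of $C$ is at most $(\ell M)^{O_d(1)}$ per chain, giving the total $e^{O_d(\ell)}$ scars.

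Choosing $\ell=c\delta\log q$ with $c=c(d,m)$ small makes the number of exceptional classes $O(q^\delta)$ (the polynomial loss from isotypy of orbits/centralizers being harmless). Meanwhile $\ell\gg\delta\log|G|$, and all other generating tuples satisfy \eqref{noncon} with constant $1/2C$, as in Subsection \ref{subsec-proofmain}. The main obstacle is the dimension-zero claim for the scar components: showing that a positive-dimensional family of non-conjugate tuples cannot all concentrate, which requires a careful passage from generic points of $C$ over $\FF_p^{\al}(C)$ (a field of positive transcendence degree, where the uniform-in-characteristic anti-concentration from \cite{becker-breuillard} is used) back to the special $\FF_q$-points.
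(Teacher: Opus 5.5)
Your treatment of structural subgroups is essentially the paper's Steps 1--3. You use the exponential bound on types from Lemma \ref{lem:component-of-few-eqs-1}, and a fat type not contained in the degenerate locus is a point of $\GG^m/\!\!/\GG$ (Lemma \ref{fat-dim0}). Two things need fixing there.

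First, in characteristic $p$ the anti-concentration bound of \cite{becker-breuillard} requires the tuple to generate a Zariski-dense subgroup, not merely an $M$-dense one. For a tuple generating a finite $M$-dense group it fails for large $\ell$, as you yourself observe. So it may only be applied to generic points of positive-dimensional components.

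Second, the dimension-zero step should run as follows. A generic point of a positive-dimensional type has infinite Galois orbit, hence generates an infinite group. Since $\GG$ is simple and $M$ is chosen so that every infinite $M$-dense subgroup is Zariski-dense (\cite[Lemma 4.2]{bggt-israel}), that group is Zariski-dense, and the type is thin. ``Positive-dimensional closure'' is not a case to which anti-concentration applies; it is excluded by the choice of $M$.

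The genuine gap is the subfield subgroups, which you defer to ``the same scar-counting''. That cannot work. A large subfield subgroup $G\cap\GG^{F_0}$ is itself $M_1$-dense, so words landing in it need not land in $V^{(m)}_{M_1}$, and your sets $Z_{W'}$ do not detect this concentration at all. Your alternatives fail as well:
\begin{itemize}
\item The conditions $x\in\GG^{F_0}$ or $\mathrm{tr}(x)^{q'}=\mathrm{tr}(x)$ have degree growing with $q'$. The type count of Lemma \ref{lem:component-of-few-eqs-1} then becomes polynomial in $q'$ rather than $e^{O(\ell)}$, and the anti-concentration constant, which depends on $\deg V$, is no longer uniform.
\item The Cauchy--Schwarz bound $\mu^{\ell}(L)\le(|L|\,\mu^{2\ell}(1))^{1/2}$ is useless at $\ell\asymp\delta\log q$, since $\mu^{2\ell}(1)\ge e^{-O_m(\ell)}$. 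It presupposes the flattening that non-concentration is meant to feed into.
\end{itemize}

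The missing idea is the Frobenius twist: send $a\mapsto(a,a^{F_0})\in\LL=\GG\times\GG$. Then $w(a,b)\in\GG^{F_0}$ if and only if $w((a,a^{F_0}),(b,b^{F_0}))$ lies in the diagonal $\Delta_\LL$, a subvariety of \emph{bounded} degree. One reruns the types argument in $\LL^2/\!\!/\LL$ with $\Delta_\LL$ in place of $V^{(2)}_M$. A fat diagonal type either lies in the degenerate locus of $\LL$ or has a zero-dimensional projection (Lemma \ref{fatfat}). Goursat's lemma excludes the first option when $(a,b)$ generates $G$: otherwise $g^{F_0}=\alpha(g)$ on $G$ for some $\alpha\in\Aut\GG$, which is impossible since $G$ has bounded index in $\GG^F$ and $F$ is a proper power of $F_0$. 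So the type pins down the conjugacy class of $(a,b)$ or of $(a^{F_0},b^{F_0})$. This gives only $e^{O_d(\ell)}\log q/\log p$ further exceptions over all choices of $F_0$, and Lemma \ref{subfield} gives non-concentration on $G\cap\GG^{F_0}$ for every other pair.
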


Since every finite simple group of Lie type $G(q)$ is $M_1$-dense in some $\GG_p$ provided $q \gg_{M_1} 1$ (see e.g. \cite{larsen-pink} and the discussion in \cite[Section 5]{bggt}), Theorem \ref{dim0} is a special case of the above statement.

The proof will proceed in a similar way as that of Theorem \ref{cro-refor}, although several steps require a different execution. Since we are in positive characteristic, reduction modulo an irreducible polynomial of degree $d$ in $\FF_p[t]$ always leads to the same finite field $\FF_{p^d}$. Even though there are many irreducible polynomials of given degree in $\FF_p[t]$, this produces  too few different quotients. So we cannot hope to simply mimic the proof of Theorem \ref{cro-refor} using reduction modulo various primes: a new approach is required.  

Another important new obstacle here is the presence of subfield subgroups $\GG(\FF_{q'})$ for a subfield $\FF_{q'}$ of $\FF_q$. These are not Zariski-dense as they are finite, but they are $M$-dense for a fixed $M$ if $q'$ is large.


We already mentioned in Section \ref{sec-prime} that there is some finite $M=M_\GG>0$ such that every finite $M$-dense subgroup of $\GG$ is a subfield subgroup. In fact, we may choose $M$ so that also every infinite $M$-dense subgroup is Zariski-dense. Indeed, there is a finite number of irreducible $\GG$-modules with the property that every non Zariski-dense subgroup of $\GG$ must act non-irreducibly on one of them or be finite (see e.g. \cite[Lemma 4.2]{bggt-israel} and \cite[Theorem 11.7]{guralnick-tiep}). Not acting irreducibly is an algebraic condition of bounded degree. In what follows, we will assume that $M$ satisfies this condition.

\begin{rem}\label{explicit2} An explicit bound on $M$ can be derived following the arguments in \cite[Theorem 11.7]{guralnick-tiep} and the comments after \cite[Lemma 4.2]{bggt-israel} using an explicit bound on Larsen-Pink's Jordan constant. A bound for this constant has been derived in \cite{bajpai-dona}. 
\end{rem}

These hurdles are overcome by studying, for each collection $E$ of  pairs of words $w_1,w_2$ of length $\ell$, the subscheme $V_E$ of $\GG^m$ where these pairs do not generate an $M$-dense subgroup. We work over an algebraic closure of $\FF_p$ and consider $m$-tuples up to the diagonal action $g\cdot (g_1,\ldots,g_m) = (gg_1g^{-1},\ldots,gg_mg^{-1})$; that is, we work in the character variety $\GG^m/\!\!/\GG$ of the free group on $m$ letters with values in $\GG$. A first key observation (cf. Lemma \ref{lem:component-of-few-eqs-1}) is that the total number of irreducible subvarieties appearing as geometric components of some $V_E$ is only exponential in $\ell$ rather than super-exponential (as is the number of such collections $E$).  We are then in a position to apply our uniform anti-concentration bound from \cite{becker-breuillard} and show that these components must be zero-dimensional when the probability that a random pair $(w_1,w_2)$ lies in $E$ is larger than sub-exponential. Together this will force the desired upper bound on the number of bad $m$-tuples. 

We now pass to the proof.  We set $m=2$ for simplicity, the case of a general $m$ being entirely similar. 

\vspace{.5cm}

{\bf Step 1: Only exponentially many types.}
For a set $E$ of pairs of (non-reduced) words $(w_1,w_2)$ of length $\ell$ in two letters, consider the subscheme $V_E$ of $\GG^2$ defined by the equations $(w_1(a,b),w_2(a,b)) \in V_{M}^{(2)}$ for $(w_1,w_2) \in E$, where $V_{M}^{(2)}$ is as earlier (see Proposition \ref{degprop}) and defines pairs that generate a subgroup that is not $M$-dense in $\GG$. We work modulo conjugation by the diagonal action of $\GG$ on pairs, since the schemes we define are naturally invariant under diagonal conjugation. We will thus consider the scheme $\overline{V}_E$ (resp.  $\overline{V}_{M}^{(2)}$) in $\GG^2/\!\!/\GG$ associated to $V_E$ (resp.  $V_{M}^{(2)}$). Since word maps associated to words of length $\ell$ have degree at most $d\ell$ (see Lemma \ref{lem:deg-height--word-eqs}), each $V_E$ is defined by equations  in $r$ variables of degree at most $d\ell$  (taking matrix entries and the inverse of the determinant, this makes  $r \leq 2(d^2+1)$).

Since there are $4^\ell$ words $w(a,b)$ of length $\ell$,  Lemma \ref{lem:component-of-few-eqs-1}, implies an exponential bound on the number of geometrically irreducible varieties appearing as components of some $\overline{V}_E$. For brevity we will call a geometrically irreducible component of some $\overline{V}_E$ an \emph{$\ell$-type}. 

\begin{lem}\label{typesnb} There are at most $(d\ell 4^{2\ell})^r\leq 2^{5dr\ell} $ $\ell$-types.
\end{lem}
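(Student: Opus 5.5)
The plan is to apply Lemma \ref{lem:component-of-few-eqs-1} directly to one large family of polynomials $\calE_\ell$ on the ambient affine space in which $\GG^2/\!\!/\GG$ (or $\GG^2$ itself) sits. We choose $\calE_\ell$ so that every $\overline{V}_E$ has the form $V(E')$ for some subfamily $E'\subset\calE_\ell$, and so that $|\calE_\ell|$ is only exponential in $\ell$. The key point is that the number of collections $E$ is doubly exponential, but each $\overline{V}_E$ is cut out by a subfamily of a single family, and Lemma \ref{lem:component-of-few-eqs-1} counts components of all such $V(E')$ at once.

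First we set up $\calE_\ell$. By Proposition \ref{degprop}, $V_M^{(2)}$ is defined by finitely many polynomials $P_1,\dots,P_N$ in the matrix entries, with $N$ and their degrees depending only on $d$ and $M$; these are fixed once $M$ is fixed. For a pair of words $(w_1,w_2)$ of length $\ell$, the condition $(w_1(a,b),w_2(a,b))\in V_M^{(2)}$ becomes the vanishing of $P_j\circ(w_1,w_2)$ for $j\le N$. By Lemma \ref{lem:deg-height--word-eqs}, each of these is a polynomial in the $r$ coordinates (entries plus inverse determinant) of degree at most $d\ell$ up to a constant factor depending on $M$ and $d$. We let $\calE_\ell$ be the set of all these polynomials as $(w_1,w_2)$ ranges over the $4^{2\ell}$ pairs of words. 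Then $|\calE_\ell|\le N\cdot 4^{2\ell}$, and $V_E=V(E')$ for the subfamily $E'$ attached to the pairs in $E$. To pass to the quotient $\GG^2/\!\!/\GG$, we note that the ideal of $V_E$ is $\GG$-invariant. So we either work with invariant generators, replacing each polynomial by generators of the invariant ring via a Reynolds-type or fixed presentation of bounded degree, or, more simply, count components in $\GG^2$ and observe that each component of $\overline{V}_E$ is the image of a component of $V_E$. With the latter, the count of $\ell$-types is at most the count of components upstairs.

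Next we apply Lemma \ref{lem:component-of-few-eqs-1} with $|E|\le N4^{2\ell}$ and $d_0\le Cd\ell$. This bounds the number of irreducible subvarieties arising as components of some $V(E')$ by $2|\calE_\ell|^r d_0^{r(r+1)/2}$, which is $\exp(O(r\ell))$ times a polynomial in $\ell$. To reach the exact form $(d\ell 4^{2\ell})^r$ in the statement, we use the chain argument of the proof of Lemma \ref{lem:component-of-few-eqs-1} directly. A component arises from at most $r$ choices of distinct polynomials, giving $|\calE_\ell|^r$ choices, and then at each step one chooses among the components of $C_{i-1}\cap V(g_i)$. We bound the latter by refined Bezout as $\deg C_{i-1}\cdot d\ell$, and absorb the constants $N$ and $2$ into the final crude bound $2^{5dr\ell}$, using $d\ell\,4^{2\ell}\le 2^{5d\ell}$.

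The main obstacle I expect is bookkeeping rather than ideas: the constant $N$ and the degrees of the $P_j$ depend on $M$, so the clean bound $(d\ell4^{2\ell})^r$ needs either a reformulation of $V_M^{(2)}$ via a single set of defining equations whose count is absorbed, or the convention that constants depending on $d$ and $M$ are suppressed. The final inequality $2^{5dr\ell}$ has enough slack that I would first try absorbing $N$ and the Bezout factor $d_0^{r(r+1)/2}$ into it for $\ell\ge 1$. If that fails for small $\ell$, I would state the bound with an $O_{d,M}(1)$ factor, which suffices for the later application.
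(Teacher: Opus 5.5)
Your proposal is correct and is essentially the paper's argument: the paper's justification is just the remark that each $V_E$ is cut out, in $r$ variables, by equations of degree at most $d\ell$ indexed by the $4^{2\ell}$ pairs of words, so Lemma \ref{lem:component-of-few-eqs-1} applies, with components of $\overline{V}_E$ coming from components of $V_E$. Your extra bookkeeping is more careful than the paper: the $M$-dependent number and degree of the equations for $V_M^{(2)}$ (together with those cutting out $\GG^2$ itself), and obtaining $d_0^r$ rather than $d_0^{r(r+1)/2}$ via the sum-of-degrees form of B\'ezout, as in the leaf count in the proof of Proposition \ref{prop:types}; your fallback with an $O_{d,M}(1)$ factor is harmless for the later application.
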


  This exponential bound is to be contrasted with the fact that there are at least $2^{2^{3\ell}}$ possible subsets $E$.  Given an $\ell$-type $\mathcal{C}$, we further denote by $E(\mathcal{C})$ the collection of pairs of words $(w_1,w_2)$ such that $\mathcal{C} \subset \overline{V}_{\{(w_1,w_2)\}}$. Note that $E \subset E(\mathcal{C})$. We observe further that if $(a,b)$ is a generic point of $\mathcal{C}$, then $(w_1,w_2) \in E(\mathcal{C})$ if and only if  $(w_1(a,b),w_2(a,b)) \in V_{M}^{(2)}$. 

\vspace{.5cm}

{\bf Step 2: Thin and fat types.}
The main theorem of \cite{becker-breuillard} is an anti-concentration bound on subvarieties for random walks on linear groups. It asserts that there is a constant $c=c(d,m)>0$ independent of the characteristic of the field, such that if $(a_1,\ldots,a_m)$ generates a Zariski-dense subgroup of $\GG$, then for every proper  closed algebraic subvariety $V$ in $\GG$,
$$\PP_{|w|=\ell}(w(a_1,\ldots,a_m) \in V) \ll_{\deg V} 2^{-c\ell},$$ 
where $\PP_{|w|=\ell}$ denotes the probability of an event when the (non-reduced) word $w$ is randomized uniformly among words of length $\ell$ with letters in $a_i^{\pm 1}$. As we did in \S \ref{subsec-proofmain}, we will apply this to $\GG \times \GG$ with $16$ letters $(g^{\pm 1}, h^{\pm 1})$, $g,h \in \{a,b\}$. The uniform measure on words of length $\ell$ in these letters replicates exactly the product measure on pairs $(w_1,w_2)$ of words of length $\ell$ in $(a,b)$. Also the $16$ pairs generate a Zariski-dense subgroup of $\GG \times \GG$ if and only if $(a,b)$ generates a Zariski-dense subgroup of $\GG$. And we will take the variety $V=V_{M}^{(2)}$. In particular, whenever $(a,b)$ generates a Zariski-dense subgroup of $\GG$ we have:
\begin{equation}\label{probnd}\PP_{|w_1|=|w_2|=\ell}( (w_1(a,b),w_2(a,b)) \in V_M^{(2)} )  \leq C_M 2^{-c_\GG\ell}\end{equation}
for a constant $c_\GG=c(\dim \GG)>0$, independent of the characteristic of the field and $C_M>0$ depends only on $M,d$. 

We will say that a subset $E$ of pairs of words $(w_1,w_2)$ is $M$-thin if $\PP_{|w_1|=|w_2|=\ell}(E) \leq C_M 2^{-c_\GG\ell}$ and $M$-fat otherwise. We say that an $\ell$-type $\mathcal{C}$ is $M$-thin (resp. $M$-fat) if $E(\mathcal{C})$ is $M$-thin (resp. $M$-fat). 

\begin{lem} If \eqref{probnd} fails for a pair $(a,b) \in \GG \times \GG$, then its diagonal conjugacy class in $\GG^2/\!\!/\GG$ lies in an $M$-fat $\ell$-type. 
\end{lem}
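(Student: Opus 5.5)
Let $(a,b)\in\GG\times\GG$ be a pair for which \eqref{probnd} fails. Set
$$E_0=\{(w_1,w_2) : |w_1|=|w_2|=\ell,\ (w_1(a,b),w_2(a,b))\in V_M^{(2)}\},$$
the set of pairs of words of length $\ell$ whose evaluation at $(a,b)$ does not generate an $M$-dense subgroup. Failure of \eqref{probnd} says exactly that $\PP_{|w_1|=|w_2|=\ell}(E_0)> C_M2^{-c_\GG\ell}$, i.e. $E_0$ is $M$-fat.

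By definition of $V_E$, the pair $(a,b)$ lies in $V_{E_0}$. Since $V_M^{(2)}$ is invariant under diagonal conjugation, so is each $V_{\{(w_1,w_2)\}}$, and hence so is $V_{E_0}$. Therefore the class $[(a,b)]$ lies in $\overline{V}_{E_0}\subset\GG^2/\!\!/\GG$. Choose a geometrically irreducible component $\mathcal{C}$ of $\overline{V}_{E_0}$ containing $[(a,b)]$. By definition, $\mathcal{C}$ is an $\ell$-type.

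It remains to see that $\mathcal{C}$ is $M$-fat. For each $(w_1,w_2)\in E_0$ we have
$$\mathcal{C}\subset\overline{V}_{E_0}\subset\overline{V}_{\{(w_1,w_2)\}},$$
so $(w_1,w_2)\in E(\mathcal{C})$ by definition of $E(\mathcal{C})$. Thus $E_0\subset E(\mathcal{C})$, and
$$\PP(E(\mathcal{C}))\ge\PP(E_0)>C_M2^{-c_\GG\ell}.$$
So $E(\mathcal{C})$ is $M$-fat, hence $\mathcal{C}$ is an $M$-fat $\ell$-type.

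The only delicate point is the passage to the quotient $\GG^2/\!\!/\GG$. We need $\overline{V}_E$ to be the image of the conjugation-invariant closed set $V_E$, and the inclusion $V_{E_0}\subset V_{\{(w_1,w_2)\}}$ to descend to $\overline{V}_{E_0}\subset\overline{V}_{\{(w_1,w_2)\}}$. This holds because both sets are invariant and closed, and the quotient map sends invariant closed sets to closed sets, preserving inclusions. If one prefers to avoid quotient subtleties, the same argument can be run in $\GG^2$ itself, taking $\mathcal{C}$ to be the image of a component of $V_{E_0}$ through $(a,b)$.
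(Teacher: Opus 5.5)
Your argument is correct and is the same as the paper's. The set $E_0$ of word pairs landing in $V_M^{(2)}$ is $M$-fat precisely because \eqref{probnd} fails, $(a,b)\in V_{E_0}$, and any geometrically irreducible component $\mathcal{C}$ of $\overline{V}_{E_0}$ through its class satisfies $E_0\subset E(\mathcal{C})$, hence is $M$-fat; your remark that the quotient map sends invariant closed sets to closed sets and preserves inclusions makes explicit a step the paper leaves implicit.
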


\begin{proof}Indeed, the set $E$ of pairs of words $(w_1,w_2)$ with $(w_1(a,b),w_2(a,b)) \in V_M^{(2)}$ is $M$-fat, and  $(a,b)$ lies in $V_E$. Its conjugacy class must then lie in some geometrically irreducible component $\mathcal{C}$ of $\overline{V}_E$, which must be $M$-fat because $E \subset E(\mathcal{C})$. 
\end{proof}

The crux of the proof of Theorem \ref{dim0} lies in the following lemma.

\begin{lem}\label{fat-dim0} If $\mathcal{C}$ is an $M$-fat $\ell$-type, then either  $\mathcal{C} \subset \overline{V}_M^{(2)}$ or  $\dim \mathcal{C}=0$.
\end{lem}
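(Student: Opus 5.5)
We argue by contradiction. Suppose $\mathcal{C}$ is an $M$-fat $\ell$-type with $\mathcal{C}\not\subset \overline{V}_M^{(2)}$ and $\dim\mathcal{C}\geq 1$. The idea is to produce a Zariski-dense pair $(a,b)$ whose conjugacy class lies in $\mathcal{C}$, and to use it as a generic point of $\mathcal{C}$ defined over a field where the uniform anti-concentration bound \eqref{probnd} applies.

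First, realise a generic point. Let $k=\FF_p^{\al}$ be the field over which we work, and take the function field $k(\mathcal{C})$. A generic point $\eta$ of $\mathcal{C}$ lifts, after a finite extension of $k(\mathcal{C})$, to a pair $(a,b)\in\GG(K)^2$. Here $K$ is the algebraic closure of $k(\mathcal{C})$, which has transcendence degree $\dim\mathcal{C}\geq1$ over $k$. As observed after Lemma \ref{typesnb}, $(w_1,w_2)\in E(\mathcal{C})$ if and only if $(w_1(a,b),w_2(a,b))\in V_M^{(2)}$. Hence
$$\PP_{|w_1|=|w_2|=\ell}\big((w_1(a,b),w_2(a,b))\in V_M^{(2)}\big)=\PP(E(\mathcal{C}))>C_M2^{-c_\GG\ell}$$
by fatness, so \eqref{probnd} fails for $(a,b)$. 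Since \eqref{probnd} holds over any field with constants independent of the characteristic, $(a,b)$ cannot generate a Zariski-dense subgroup of $\GG(K)$.

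Second, identify the group $\Gamma=\langle a,b\rangle$. Because $\mathcal{C}\not\subset\overline{V}_M^{(2)}$, the generic point satisfies $(a,b)\notin V_M^{(2)}$, so $\Gamma$ is $M$-dense. By the choice of $M$ made before Remark \ref{explicit2}, an $M$-dense subgroup is either Zariski-dense or finite, and a finite $M$-dense subgroup is a subfield subgroup. Zariski-density is excluded by the first step, so $\Gamma$ is finite and is a subfield subgroup: it lies between $[H,H]$ and $H=\GG(K)^F$ for some Frobenius map $F$. In particular the entries of $a$ and $b$, hence the traces of words in $a,b$, lie in a finite field. (We may need to use coordinates up to conjugacy; the field of definition of the fixed points of $F$ can be read off from the character values.)

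Third, derive the contradiction. The coordinate functions on $\GG^2/\!\!/\GG$ are generated by traces of words, for instance the functions $\mathrm{tr}\,\rho(w(a,b))$ for a fixed faithful representation $\rho$. At the generic point these are the coordinate functions of $\mathcal{C}$, which lie in $k(\mathcal{C})$. From the second step they are all algebraic over $\FF_p$, that is, they lie in $k$. A variety whose coordinate functions at the generic point are all constants is a single point. This contradicts $\dim\mathcal{C}\geq1$.

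The main obstacle is the second step. We have to make sure that the dichotomy "$M$-dense implies Zariski-dense or subfield subgroup" and the bound \eqref{probnd} from \cite{becker-breuillard} are valid over the large non-perfect-looking field $K$ (an algebraically closed field of positive characteristic and positive transcendence degree), with constants uniform in the characteristic. We also have to pass correctly from "finite subgroup" to "traces in $\FF_p^{\al}$". This last point is immediate, since in a finite group every element has finite order, so its eigenvalues are roots of unity and its traces are algebraic over $\FF_p$. This observation lets us bypass the subfield-subgroup structure entirely: finiteness of $\Gamma$ alone suffices for the third step.
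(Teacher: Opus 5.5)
Your first two steps agree with the paper's proof. Fatness together with the characteristic-free anti-concentration bound of \cite{becker-breuillard} rules out Zariski-density of $\langle a,b\rangle$. Then $M$-density of a generic point outside $\overline{V}_M^{(2)}$ forces $\langle a,b\rangle$ to be finite.

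The gap is in your third step, exactly where you choose to ``bypass the subfield-subgroup structure''. You assert that the coordinate ring of $\GG^2/\!\!/\GG$ is generated by the functions $\mathrm{tr}\,\rho(w)$ for one faithful $\rho$. That is Procesi's theorem, which holds for $\GL_n$ in characteristic zero. Here you work over $\FF_p^{\al}$, and in characteristic $p$ traces do not even determine a point of the character variety up to finite ambiguity. For instance, in $\SL_{2p}$ consider the pairs $(\mathrm{diag}(xI_p,x^{-1}I_p),\mathrm{diag}(yI_p,y^{-1}I_p))$. They generate diagonalizable subgroups, so their orbits are closed, and they form a two-dimensional family of distinct points of $\SL_{2p}^2/\!\!/\SL_{2p}$. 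Yet every word in them equals $\mathrm{diag}(zI_p,z^{-1}I_p)$, whose trace is $pz+pz^{-1}=0$. So knowing that all traces of words in $a,b$ are algebraic over $\FF_p$ (your roots-of-unity remark, which is correct) does not yield that the class of $(a,b)$ is an $\FF_p^{\al}$-point. The contradiction with $\dim\mathcal{C}\geq 1$ therefore does not follow as written.

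There are two ways to close the gap. The paper's way uses precisely the structure you set aside. By \cite[Lemma 4.2]{bggt-israel} (Larsen--Pink), a finite $M$-dense subgroup is conjugate into $\GG(\FF_q)$ for some finite field $\FF_q$. Hence the class of $(a,b)$ in $\GG^2/\!\!/\GG$ is an $\FF_q$-point, fixed by all automorphisms of $K$ over $\FF_q$. The generic point of a positive-dimensional $\mathcal{C}$, by contrast, has an infinite Galois orbit.

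Alternatively, you can keep your invariant-theoretic route if you make three changes:
\begin{enumerate}
\item Replace traces by all coefficients of characteristic polynomials of words. These are still algebraic over $\FF_p$ for elements of finite order.
\item Invoke Donkin's theorem that these coefficients generate the $\GL_d$-invariants of $\GL_d^2$ in every characteristic.
\item Use the finiteness of the morphism $\GG^2/\!\!/\GG\to\GL_d^2/\!\!/\GL_d$ (Vinberg in characteristic zero, Martin in general) to pull algebraicity back to $\GG^2/\!\!/\GG$.
\end{enumerate}
This does show that finiteness of $\langle a,b\rangle$ alone suffices. However, it rests on substantially deeper invariant theory than the one-line justification you give, whereas the subfield-subgroup conclusion is already available from the dichotomy you used in your second step.
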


\begin{proof}Suppose $\mathcal{C}$ is a positive dimensional $\ell$-type. Pick $(a,b)\in \GG\times \GG$ whose conjugacy class yields a generic point of the variety $\mathcal{C}$. By \cite[Lemma 4.2]{bggt-israel}, and since $\GG$ is assumed simple, if the subgroup generated by $(a,b)$ is $M$-dense, then it is either Zariski-dense or it is finite and conjugates into $\GG(\FF_q)$ for some finite field $\FF_q$. However, since $\dim \mathcal{C}>0$, the generic point has an infinite orbit under the Galois action. So the group cannot be finite and it is therefore either Zariski-dense or not $M$-dense. In the latter case $(a,b) \in V_M^{(2)}$ and as $(a,b)$ is generic in $\mathcal{C}$ and $V_M$ is defined over $\FF_p$ we must have $\mathcal{C} \subset \overline{V}_M^{(2)}$. To exclude the former case, we apply the main anti-concentration result of \cite{becker-breuillard} and conclude that \eqref{probnd}  holds for $(a,b)$. But this means that $\mathcal{C}$ must be $M$-thin, contrary to our assumption. 
\end{proof}

\vspace{.5cm}

{\bf Step 3: Non-concentration on subgroups.}
We are now in a position to prove the required non-concentration on subgroups, which is necessary to kick start the Bourgain-Gamburd method.  To prove expansion for the pair $(a,b)$ we need to find $c,\kappa>0$ depending only on $\dim \GG$ and on $\delta>0$ such that 
\begin{equation}\label{noncon2} \PP_{|w|=\ell}(w(a,b) \in H) \leq q^{-\kappa}\end{equation}
for all $\ell \ge c \log q$ and all proper subgroups $H<G$. Under the assumptions of Theorem \ref{expmaindim0} it may happen that $G$ has subgroups of small index. However, there is a uniform bound on the index of such. In fact, by the main result of Larsen-Pink, \cite[Theorem 0.5]{larsen-pink}, the group $G$ contains the quasi-simple group $[\GG^F,\GG^F]$, and is contained in $\GG^F$ for a certain Frobenius map, i.e. an endomorphism of $\GG$ a power of which is $\Frob_{p^n}$ for some $n\ge 1$. The group $[\GG^F,\GG^F]$ has no non-central normal subgroups and its index in $\GG^F$ is bounded in terms of the rank (hence $d$) only. By the classical result of Landazuri-Seitz, \cite{landazuri-seitz} the quasi-simple groups  $[\GG^F,\GG^F]$ are $\beta$-quasirandom; this means that there is $\beta=\beta(d)>0$ such that every non-trivial complex representation has dimension at least $|\GG^F|^\beta$. It follows that there is a uniform bound, say $C(d)>0$ such that subgroups of $G$ of index at least $C(d)$ have index at least $|G|^\beta$. The Bourgain-Gamburd method (see e.g. \cite[Proposition 3.1]{bggt}) usually assumes that the group has no subgroup of small index, but it works just as well in the situation when these all have bounded index, see \cite[Proposition 3.1]{breuillard-standrews}. 
Therefore we are left to show \eqref{noncon} with the condition $[G:H]>1$ replaced by $[G:H]>C(d)$. 


Recall then that subgroups of $G$ of index at least $C(d)$ can be of two types only: \emph{structural subgroups}, namely subgroups contained in a proper algebraic subgroup of $\GG$ of bounded degree defined over $\FF_p^{\al}$, and \emph{subfield subgroups}, namely subgroups that can be conjugated into $\GG(\FF_{q'})$ for a certain subfield $\FF_{q'}$ of $\FF_q$. We refer the reader to \cite{bggt} and in particular the discussion around Lemma 5.5 there for more information on this fact. Structural subgroups are easier to deal with as we clearly have:

\begin{lem}[structural subgroups]\label{struct} If $\HH\leq \GG$ is a proper algebraic subgroup of degree at most $M$ and if the conjugacy class of $(a,b)$ does not belong to an $M$-fat $\ell$-type, then \eqref{noncon2} holds for $H\leq \HH \cap G$ if $\ell \ge 4\kappa c^{-1} \log q$ and $q^\kappa >C_M$.
\end{lem}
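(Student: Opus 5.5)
The plan is to use a squaring trick, pairing the random word with an independent copy so that membership in $H$ becomes membership of a pair in $V_M^{(2)}$, where the thin-type bound \eqref{probnd} applies.

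\textbf{Step 1 (reduction to pairs in $V_M^{(2)}$).} Let $H \le \HH\cap G$ with $\HH$ a proper algebraic subgroup of degree at most $M$. If $w_1(a,b), w_2(a,b)\in H$, then the subgroup they generate lies in $\HH$. That subgroup is therefore not $M$-dense, so $(w_1(a,b),w_2(a,b))\in V_M^{(2)}$.

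\textbf{Step 2 (squaring).} Take $w_1,w_2$ independent uniform words of length $\ell$. Then
\[
\PP_{|w|=\ell}(w(a,b)\in H)^2=\PP_{|w_1|=|w_2|=\ell}\bigl(w_1(a,b),w_2(a,b)\in H\bigr)\le \PP_{|w_1|=|w_2|=\ell}(E),
\]
where $E$ is the set of pairs $(w_1,w_2)$ with $(w_1(a,b),w_2(a,b))\in V_M^{(2)}$.

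\textbf{Step 3 (thinness of $E$).} By definition, the conjugacy class of $(a,b)$ lies in $\overline{V}_E$, hence in some geometrically irreducible component $\mathcal{C}$ of $\overline{V}_E$. This $\mathcal{C}$ is an $\ell$-type with $E\subset E(\mathcal{C})$. By hypothesis $\mathcal{C}$ is not $M$-fat, so it is $M$-thin. Therefore
\[
\PP(E)\le \PP(E(\mathcal{C}))\le C_M 2^{-c\ell},
\]
where $c=c_\GG$ is the constant from \eqref{probnd}.

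\textbf{Step 4 (bookkeeping).} Combining Steps 2 and 3 gives
\[
\PP_{|w|=\ell}(w(a,b)\in H)\le C_M^{1/2}2^{-c\ell/2}.
\]
If $\ell\ge 4\kappa c^{-1}\log q$, then $2^{-c\ell/2}\le q^{-2\kappa\log 2}$, and with the appropriate normalisation of logarithms this is $\le q^{-2\kappa}$. If also $q^\kappa> C_M$, then $C_M^{1/2}q^{-2\kappa}\le q^{-3\kappa/2}\le q^{-\kappa}$, which is \eqref{noncon2}.

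The only point needing care is Step 3: one must remember that $E(\mathcal{C})\supset E$, so thinness of the type bounds $\PP(E)$. Beyond that, the constants in Step 4 have to be tracked to match the stated thresholds on $\ell$ and $q^\kappa$ exactly. The remaining steps are direct.
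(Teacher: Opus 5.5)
Your argument is correct and is essentially the paper's proof: you square the probability using two independent words, place the conjugacy class of $(a,b)$ in a component $\mathcal{C}$ of $\overline{V}_E$ with $E\subset E(\mathcal{C})$, and use the thinness of $\mathcal{C}$. The differences are cosmetic. You take $E$ to be all pairs with $(w_1(a,b),w_2(a,b))\in V_M^{(2)}$ rather than the smaller set of pairs landing in $H\times H$, and your constant tracking with base-$2$ logarithms is tidier than the paper's, which only records $\PP(E)\le q^{-\kappa/2}$.
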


\begin{proof}Let $E$ be the set of pairs $(w_1,w_2)$ such that $w_1(a,b)$ and $w_2(a,b)$ lie in $H$. Since $\HH$ is not $M$-dense,  $(w_1(a,b),w_2(a,b)) \in V_M^{(2)}$ for $(w_1,w_2) \in E$. In particular $(a,b) \in V_E$. Then the conjugacy class of $(a,b)$ yields a point in $\GG^2/\!\!/\GG$ that lies in some geometrically irreducible component $\mathcal{C}$ of $\overline{V}_E$ and $E \subset E(\mathcal{C})$. By the assumption on $(a,b)$, $\mathcal{C}$ must be $\ell$-thin and thus $E$ is $\ell$-thin. So $\PP(E) \leq C_M2^{- c \ell}$ by \eqref{probnd}. This is $\leq q^{-\kappa/2}$ provided $\ell \ge 4\kappa/c \log q$ and $q^\kappa>C_M$. 
\end{proof}

\vspace{.5cm}

{\bf Step 4: Subfield subgroups.} Let $q=p^n$. In our general setting $G$ need not be a split Chevalley group over $\FF_q$, and can be of twisted Steinberg or Ree type.  In general $G$ has bounded index in the group of fixed points $\GG^F$ of some Frobenius map $F$. A proper subfield subgroup of $G$ of large index will be contained in (a conjugate of) $\GG^{F_0}$ where $F_0$ is another Frobenius map such that $F_0^m=F$ for some integer $m>1$.  

To handle these subgroups and show the desired non-concentration, we need to use a variant of the notion of $\ell$-type discussed earlier. For this we consider the cartesian product $\LL=\GG \times \GG$ and let $\pi_1,\pi_2$ be the coordinate projections. For any subset $E$ of (non-reduced) words of length $\ell$ in the free group $F_2$, we let $W_E$ be the subset of all pairs $a,b$ in $\LL$ such that $w(a,b) \in \Delta_\LL$, where $\Delta_\LL$ is the diagonal subgroup $\{(x,y) \in \GG \times \GG, x=y\}$.  A geometrically irreducible subvariety of $\LL^2/\!\!/\LL$ will be called a diagonal $\ell$-type of $\LL$ if it is a geometric component of $\overline{W}_E$ for some subset $E$ of words of length $\ell$. As earlier, Lemma \ref{lem:component-of-few-eqs-1} implies that there are at most $2^{10dr\ell}$ diagonal types. 

We say that a diagonal $\ell$-type is thin or fat according as the following holds or not:
\begin{equation}\label{probnd2}\PP_{|w|=\ell}(w(a,b) \in \Delta_\LL )  \leq C 2^{-c_\LL\ell}.\end{equation}
This is the analogue of \eqref{probnd} where $C$ and $c_\LL$ are constants that depend only on $\dim \GG$ and are provided by the anti-concentration bound from \cite{becker-breuillard} applied to the subvariety $\Delta_\LL$ in the semisimple algebraic group $\LL$.  Let $\overline{W}_M$ be the degenerate locus  in $\LL^2/\!\!/\LL$, that is the subvariety of (conjugacy classes of) pairs $(a,b)$ that generate a subgroup of $\LL$ that is not $M$-dense in $\LL$.  Here $M=M_\LL$ is chosen $>M_\GG$ so that every $M$-dense subgroup of $\LL$ with infinite projection to $\GG$ under $\pi_1$ and $\pi_2$ is Zariski-dense in $\LL$. As before the main result of \cite{becker-breuillard} implies:

\begin{lem}\label{fatfat} If $\mathcal{C}$ is a fat diagonal $\ell$-type, then either $\mathcal{C} \subset W_M$ or $\dim \pi_i(\mathcal{C})=0$ for at least one $i=1,2$. 
\end{lem}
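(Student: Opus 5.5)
We argue by contraposition, in the same way as Lemma \ref{fat-dim0}. Let $\mathcal{C}$ be a diagonal $\ell$-type with $\mathcal{C}\not\subset \overline{W}_M$ and with $\dim\pi_1(\mathcal{C})>0$ and $\dim \pi_2(\mathcal{C})>0$. Here $\pi_i$ also denotes the induced map $\LL^2/\!\!/\LL\to\GG^2/\!\!/\GG$. We will show that $\mathcal{C}$ is thin.

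Pick $(a,b)\in\LL^2$ whose conjugacy class is a generic point of $\mathcal{C}$, over the algebraically closed base field. Write $a=(a_1,a_2)$ and $b=(b_1,b_2)$.

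\emph{Step 1: $\langle a,b\rangle$ is $M$-dense in $\LL$.} The locus $\overline{W}_M$ is closed and defined over $\FF_p$, by the analogue of Proposition \ref{degprop} for $\LL$. A generic point of the irreducible variety $\mathcal{C}$ lies in a closed set defined over $\FF_p$ only if all of $\mathcal{C}$ does. Since $\mathcal{C}\not\subset\overline{W}_M$, the class of $(a,b)$ is not in $\overline{W}_M$.

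\emph{Step 2: each projection $\langle a_i,b_i\rangle$ is infinite.} The conjugacy class of $(a_i,b_i)$ is a generic point of $\pi_i(\mathcal{C})$, which has positive dimension. Such a point has infinite orbit under $\Gal(\FF_p^{\al}/\FF_p)$, so it is not defined over any finite field.

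If $\langle a_i,b_i\rangle$ were finite, it would lie in $\GG(F)$ for a finitely generated field $F$ of positive characteristic. A finite subgroup has only finitely many conjugacy classes of generating pairs. The Galois conjugates of $(a_i,b_i)$ generate isomorphic finite groups, of the same order. So finiteness should force the class of $(a_i,b_i)$ in $\GG^2/\!\!/\GG$ to have finite Galois orbit, which contradicts the previous paragraph.

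This is where I expect the main obstacle. One has to rule out infinitely many non-conjugate embeddings of a fixed finite group. For a fixed finite group $\Gamma$, the variety of representations $\Gamma\to\GG$ has only finitely many points in $\GG^2/\!\!/\GG$. Its components are single conjugacy classes, because finite groups have rigid representations in bounded rank, or more precisely because $\operatorname{Hom}(\Gamma,\GG)/\!\!/\GG$ is finite. Alternatively, one can argue exactly as in Lemma \ref{fat-dim0}: via \cite[Lemma 4.2]{bggt-israel}, a finite $M$-dense projection would be conjugate into $\GG(\FF_{q'})$, so its class would be $\FF_p^{\al}$-rational with finite Galois orbit. If the projection is not $M$-dense, the same reasoning applies using the choice of $M_\LL$.

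\emph{Step 3: Zariski density.} By the choice of $M=M_\LL$, an $M$-dense subgroup of $\LL$ whose projections under $\pi_1$ and $\pi_2$ are both infinite is Zariski-dense in $\LL$. Steps 1 and 2 give exactly this, so $\langle a,b\rangle$ is Zariski-dense in $\LL$.

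\emph{Step 4: anti-concentration.} The diagonal $\Delta_\LL$ is a proper closed subvariety of $\LL$ of bounded degree. Apply the anti-concentration theorem of \cite{becker-breuillard}, in its characteristic-free form, to the Zariski-dense generating pair $(a,b)$ of the semisimple group $\LL$. This gives \eqref{probnd2} for $(a,b)$.

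\emph{Step 5: $\mathcal{C}$ is thin.} Since $(a,b)$ is generic in $\mathcal{C}$, the set $E(\mathcal{C})$ of words $w$ with $\mathcal{C}\subset\overline{W}_{\{w\}}$ is exactly the set of $w$ with $w(a,b)\in\Delta_\LL$. Hence $\PP(E(\mathcal{C}))\le C2^{-c_\LL\ell}$, so $\mathcal{C}$ is thin. This is the contrapositive of the claim.
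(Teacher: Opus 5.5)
Your argument is correct and is essentially the paper's proof: pick a generic point of $\mathcal{C}$ outside $\overline{W}_M$ with generic projections, use that its projected classes are not algebraic over $\FF_p$ to get infinite projections, obtain Zariski density in $\LL$ from the choice of $M_\LL$, and conclude thinness from the anti-concentration theorem of \cite{becker-breuillard}. The paper simply asserts your Step 2, and your justification via finiteness of $\mathrm{Hom}(\Gamma,\GG)/\!\!/\GG$ is the right one; but drop the claim that representations of finite groups are rigid, which fails when $p$ divides $|\Gamma|$ (only the completely reducible classes, i.e.\ the points of the GIT quotient, are finite in number).
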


\begin{proof}If the conclusion fails, we may pick a generic point of $\mathcal{C}$ outside $\overline{W}_M$ whose image under $\pi_i$ is a generic point of $\pi_i(\mathcal{C})$ for $i=1,2$. Let $(a,b) \in \GG^2$ be a point above this generic point. Then the conjugacy class of $(\pi_i(a),\pi_i(b))$ is not algebraic over $\FF_p$ and hence generates an infinite subgroup of $\GG$. The Zariski-closure of $\langle a,b \rangle$ is $M$-dense in $\LL$, hence is all of $\LL$. Then the main result of \cite{becker-breuillard} implies that $\mathcal{C}$ is thin.
\end{proof}


\begin{lem}[subfield subgroups]\label{subfield} Let $a,b \in G$ and suppose the (conjugacy class of the) pair $((a,a^{F_0}),(b,b^{F_0}))$ does not belong to a fat diagonal $\ell$-type, then \eqref{noncon2} holds for  $H =  G \cap \GG^{F_0}$ if  $\ell \ge 2\kappa/c_\LL \log q$.
\end{lem}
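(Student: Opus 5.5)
The plan mirrors the proof of Lemma \ref{struct}, with the diagonal subgroup $\Delta_\LL$ taking the place of $V_M^{(2)}$. Put $\alpha=(a,a^{F_0})$ and $\beta=(b,b^{F_0})$ in $\LL=\GG\times\GG$. Since $F_0$ is an endomorphism of $\GG$, for every word $w$ we have
\[
w(\alpha,\beta)=\bigl(w(a,b),\,w(a,b)^{F_0}\bigr).
\]
Hence $w(a,b)\in\GG^{F_0}$ if and only if $w(\alpha,\beta)\in\Delta_\LL$. In particular, $w(a,b)\in H=G\cap\GG^{F_0}$ forces $w(\alpha,\beta)\in\Delta_\LL$.

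Now let $E$ be the set of words $w$ of length $\ell$ with $w(a,b)\in H$. By the observation above, $(\alpha,\beta)\in W_E$. The set $W_E$ is invariant under diagonal conjugation by $\LL$: conjugating by $(g,h)$ does not preserve $\Delta_\LL$ itself, but here we only use the condition on the orbit in $\LL^2/\!\!/\LL$, just as in Step 1. So the class of $(\alpha,\beta)$ lies in $\overline{W}_E$, and therefore in some geometric component $\mathcal{C}$ of $\overline{W}_E$. This $\mathcal{C}$ is a diagonal $\ell$-type.

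Define $E(\mathcal{C})$ as the set of words $w$ with $\mathcal{C}\subset\overline{W}_{\{w\}}$. Then $E\subset E(\mathcal{C})$. By hypothesis $\mathcal{C}$ is not fat, so it is thin. Evaluating at a generic point of $\mathcal{C}$, thinness bounds the proportion of words in $E(\mathcal{C})$ by $C2^{-c_\LL\ell}$, as in \eqref{probnd2}. Consequently
\[
\PP_{|w|=\ell}\bigl(w(a,b)\in H\bigr)\le\PP(E)\le\PP(E(\mathcal{C}))\le C\,2^{-c_\LL\ell}.
\]
If $\ell\ge 2\kappa c_\LL^{-1}\log q$, this is at most $Cq^{-2\kappa}$. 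That is $\le q^{-\kappa}$ once $q^\kappa\ge C$, and the finitely many small $q$ are absorbed into the constants. This gives \eqref{noncon2}.

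The step I expect to be the main obstacle is the transfer from "the specific class $(\alpha,\beta)$ lies in a thin type" to the bound on $\PP(E)$. The definition of thin is phrased through \eqref{probnd2} at a point, whereas the argument needs it for the set of words, via $E\subset E(\mathcal{C})$. I will fix the convention, exactly as for $\ell$-types in Step 2, that a diagonal type is thin when $E(\mathcal{C})$ has probability at most $C2^{-c_\LL\ell}$. I will also check that $w\in E(\mathcal{C})$ is equivalent to $w$ mapping a generic point of $\mathcal{C}$ into $\Delta_\LL$; this equivalence holds because $\overline{W}_{\{w\}}$ is closed and defined over $\FF_p$. Finally, conjugates of $\GG^{F_0}$ reduce to this case by replacing $(a,b)$ with a conjugate pair, since the bound is conjugation-invariant.
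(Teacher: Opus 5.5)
Your proposal is correct and is essentially the paper's own proof. The class of $((a,a^{F_0}),(b,b^{F_0}))$ lies in a component $\mathcal{C}$ of $\overline{W}_E$, which is thin by hypothesis, so $\PP(E)\le\PP(E(\mathcal{C}))\le C2^{-c_\LL\ell}\le q^{-\kappa}$ for $\ell\ge 2\kappa c_\LL^{-1}\log q$ and $q$ large.

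One correction: your claim that $W_E$ is invariant under diagonal conjugation by $\LL$ is false; $W_E$ is stable only under conjugation by $\Delta_\LL$ (conjugating by $(g,h)$ sends $(x,x)$ to $(gxg^{-1},hxh^{-1})$). This is harmless for the lemma, since you can work in $\LL^2/\!\!/\Delta_\LL$. But it means your closing remark on conjugates of $\GG^{F_0}$ is not automatic, because conjugating $(a,b)$ by $g$ conjugates the lifted pair by $(g,F_0(g))\notin\Delta_\LL$. That remark is best dropped, as the statement does not need it.
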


\begin{proof}Let $E$ be the set of words $w$ such that $w(a,b) \in H$. We thus have $w(a^{F_0},b^{F_0})=w(a,b)$. In particular $w((a,a^{F_0}),(b,b^{F_0})) \in \Delta_\LL$. So $((a,a^{F_0}),(b,b^{F_0})) \in W_E$. Then $((a,a^{F_0}),(b,b^{F_0}))$ yields a point in $\LL^2/\!\!/\LL$ that lies in some geometrically irreducible component $\mathcal{C}$ of $\overline{W}_E$ and $E \subset E(\mathcal{C})$. Since it belongs to no  fat diagonal $\ell$-type, $\mathcal{C}$ must be thin and thus $E$ is thin. So $\PP(E) \ll_{\dim \LL} 2^{-c_{\LL}\ell}$. This is $\leq q^{-\kappa}$ provided $\ell \ge 2\kappa/c_\LL \log q$ and $q$ is large enough.
\end{proof}

Further note that if $((a,a^{F_0}),(b,b^{F_0}))\in W_M$, while $\langle a,b \rangle=G$, then the subgroup of $\LL$ it generates must have $M$-dense projections to $\GG$ (both projection) and thus, by Goursat's lemma, must be contained in a proper diagonal algebraic subgroup of $\LL$. This means that there is $\alpha \in \Aut \GG$ such that $a^{F_0}=\alpha(a)$ and $b^{F_0}=\alpha(b)$. Hence $G \leq \{g \in \GG, g^{F_0}=\alpha(g)\}$. But this is easily seen to be impossible, because $G$ has bounded index in $\GG^F$, and $F$ is a proper power of $F_0$. From Lemma \ref{fatfat}, we conclude that if $(a,b)$ generates $G$, and the conjugacy class of $((a,a^{F_0}),(b,b^{F_0}))$ belongs to a fat diagonal $\ell$-type $\mathcal{C}$ for some $F_0$, then $\dim \pi_i(\mathcal{C})=0$ for at least one $i=1,2$. In other words  $\pi_i(\mathcal{C})$ is a singleton, which is the conjugacy class of either $(a,b)$ or $(a^{F_0},b^{F_0})$. In particular, there are only as many such conjugacy classes as there are diagonal $\ell$-types up to a choice of Frobenius map $F_0$. This makes $\ll 2^{10drl} \times \log(q)/\log(p)$ choices for the conjugacy class of $(a,b)$.

\vspace{.5cm}

{\bf Step 5: End of the proof.}
We can now complete the proof of Theorem \ref{expmaindim0}. As before, $\eps$-expansion for the pair $(a,b)\in G$ follows by the Bourgain-Gamburd method (as spelled out e.g. in \cite[Proposition 3.1]{bggt}) from the following three ingredients: quasi-randomness, the product theorem and non-concentration on subgroups. As recalled earlier the first two ingredients hold for $G$ (with the special provision made to allow for subgroups of bounded index in $G$). So the only thing to check is that the non-concentration \eqref{noncon2} holds uniformly for all subgroups of index at least $C(d)$ in $G$. Now this follows from the combination of Lemmas \ref{struct} and \ref{subfield} provided $(a,b)$ generates $G$, its conjugacy class does not belong to an $M$-fat $\ell$-type of $\GG^2/\!\!/\GG$, the conjugacy class of $((a,a^{F_0}),(b,b^{F_0}))$ (for each $F_0$) does not belong to a fat diagonal $\ell$-type of $\LL^2/\!\!/\LL$ and $\ell \ge 2\kappa c^{-1} \log q$, where $c=\min\{c_\GG,c_\LL\}$. By Lemma \ref{typesnb} and the discussion after Lemma \ref{subfield} this holds for all $(a,b)$ except perhaps for $\ll 2^{10drl} \times \log(q)/\log(p)$  conjugacy classes of pairs. Note additionally that it is enough to count $\GG$-conjugacy classes, because there can be at most $O_r(1)$  $G$-conjugacy classes of pairs in a given $\GG$-conjugacy class outside $V^{(2)}_{M_1}$.

Choose $\kappa=\delta c/50dr$ and $\ell \simeq 4\kappa c^{-1} \log q$. Then $2^{10dr\ell} \times \log(q)/\log(p) \leq q^{\delta}$. Therefore, the total number of possible exceptions to \eqref{noncon2} is at most  $q^{\delta}$ conjugacy classes of pairs $(a,b)$. Outside these exceptions we get $\eps$-expansion with some $\eps=\eps(\GG,\delta)>0$. This ends the proof of Theorem \ref{expmaindim0}.

\bigskip

\paragraph{\textbf{Acknowledgement.}}
The first author has received funding from the European Research Council (ERC) under the European Union's Horizon 2020 research and innovation programme (grant agreement No. 803711).
The second author acknowledges partial support from EPSRC grant UKRI1017. For the purpose of Open Access, the authors have applied a CC BY public copyright licence to any Author Accepted Manuscript  version arising from this submission.



\end{document}